\newtheorem{thm}{Theorem}[section]
\newtheorem{prop}[thm]{Proposition}
\newtheorem{lemma}[thm]{Lemma}
\newtheorem{cor}[thm]{Corollary}
\newenvironment{proof}[1][Proof]%
{
\begin{trivlist} \item[]  {\em #1.} }%
{\hspace*{\fill} $\Box$
\end{trivlist}}
      \newtheorem{definition}[thm]{Definition}
      \newtheorem{remark}[thm]{Remark}
      \newtheorem{claim}[thm]{Claim}
\newcommand{\Id}{\mathrm{Id} }
\newcommand{\1}{\boldsymbol{1} }
\newcommand{\cupd}{\overset{.}{\cup}}
\newcommand{\Ad}{\operatorname{Ad}}
\newcommand{\ad}{\operatorname{ad}}
\newcommand{\image}{\operatorname{Im}}
\newcommand{\rank}{\operatorname{rk}}
\newcommand{\tr}{\operatorname{tr}}
\newcommand{\Ker}{\operatorname{Ker}}
\renewcommand{\hom}{\operatorname{Hom}}
\newcommand{\aut}{\operatorname{Aut}}
\newcommand{\Aut}[1]{\aut(#1)}
\newcommand{\CC}{\mathbb{C}}
\newcommand{\NN}{\mathbb{N}}
\newcommand{\RR}{\mathbb{R}}
\newcommand{\ZZ}{\mathbb{Z}}
\def\co{\colon}
\begin{document}

\title{The infinitesimal projective rigidity under Dehn fillings}

\author{Michael Heusener and 
Joan Porti\footnote{Partially supported
by the Spanish Micinn through grant MTM2009-07594. Prize ICREA 2008}}


\maketitle

\begin{abstract}
To a hyperbolic manifold one can associate a canonical projective structure and a fundamental question is whether or not
it can be deformed. In particular, the canonical projective structure of a finite volume hyperbolic manifold with cusps might have deformations which are trivial on the cusps. 

The aim  of this article is to prove that if the canonical projective structure on a cusped hyperbolic manifold $M$ is \emph{infinitesimally projectively rigid relative to the cusps}, then infinitely many hyperbolic Dehn fillings on $M$ are locally projectively rigid.
We analyze in more
detail the figure eight knot and the Whitehead link exteriors, for which we can 
give explicit infinite families of slopes with projectively rigid Dehn fillings.
\medskip
\noindent \emph{MSC:} 57M50; 53A20; 53C15\\
\emph{Keywords:} Projective structures; variety of representations; infinitesimal deformations.
\end{abstract}


\section{Introduction}
\label{sec:intro}

A closed hyperbolic $n$--dimensional manifold inherits a canonical projective structure. This can be easily seen by 
considering the Klein model for the hyperbolic space. Projective structures on manifolds were studied by 
Benz\'ecri in the 1960's \cite{Benzecri}. 
Though the hyperbolic structure is rigid for $n>2$
(cf.~\cite{Weil,Mostow}), it might be possible to deform 
the canonical projective structure.
Kac and Vinberg \cite{Kac-Vinberg} gave the first examples of 
such deformations.
Koszul \cite{Koszul} and Goldman later generalized these examples. 
Johnson and Millson provide deformations of the canonical projective structure by means of bending along 
totally geodesic surfaces \cite{JohnsonMillson}. 
Examples of deformations for Coxeter orbifolds have been obtained
by Benoist \cite{BenoistIV}, Choi \cite{Choi}, and Marquis \cite{Marquis}. 
See the survey by Benoist \cite{BenoistSurvey} and references therein for more results on
convex projective structures.

In the sequel we will use the following notation:
\begin{definition}
A closed hyperbolic manifold is called \emph{locally projectively rigid} if the canonical projective structure induced by the hyperbolic metric 
cannot be deformed. 
\end{definition}

Cooper, Long and Thistlethwaite have studied the deformability of 4500 hyperbolic manifolds from the
Hodgson--Weeks census with rank 2 fundamental group \cite{CLTEM}, proving that at most 61 can be deformed. 
The goal of this paper is to provide infinite families of  projectively locally rigid manifolds, by means of Dehn filling.

\medskip

Let  $N$ be a closed hyperbolic $3$--dimensional manifold.
We will make use of the fact that geometric structures on $N$ are controlled by their holonomy representation.
Hence we consider the holonomy representation of the closed hyperbolic 3--manifold $N$
$$
\rho \co\pi_1(N)\to PSO(3,1) \subset PGL(4).
$$
If not specified, the coefficients of matrix groups are real: 
$PGL(4)= PGL(4,\mathbf R)$.
The closed manifold $N$ is locally projectively rigid if and only if
all deformations of $\rho$ in $PGL(4)$
are contained
in the $PGL(4)$--orbit  of $\rho$.

Existence or not of deformations is often studied at the infinitesimal level. 
We may consider the adjoint action on the lie algebra $\mathfrak{so}(3,1)$. Then Weil's infinitesimal rigidity \cite{Weil} asserts that
$$
H^1(\pi_1(N);\mathfrak{so}(3,1)_{\Ad\rho})=0.
$$
The adjoint action extends to the Lie algebra 
$\mathfrak{sl}(4) := \mathfrak{sl}(4,\mathbf R)$ and motivates the following definition.

\begin{definition}
A closed hyperbolic three manifold $N$ is called \emph{infinitesimally projectively rigid} if 
$$
H^1(\pi_1(N);\mathfrak{sl}(4)_{\Ad\rho})=0.
$$
\end{definition}

Infinitesimal rigidity implies local rigidity, but the examples of \cite{CLTGT} and \cite{CLTEM} show that the converse is not true.

We are working with aspherical manifolds, so computing the cohomology of a manifold or of its fundamental group does not make any difference.

For cusped manifolds one has a similar definition. Let $M$ denote a compact three manifold with boundary a union of tori 
and whose interior is hyperbolic with finite volume.

\begin{definition}
The manifold  $M$ is called \emph{infinitesimally projectively rigid relative to the cusps} if the inclusion $\partial M\subset M$ induces an injective homomorphism
$$
0\to H^1(M;\mathfrak{sl}(4)_{\Ad\rho})\to H^1(\partial M;\mathfrak{sl}(4)_{\Ad\rho}).
$$
\end{definition}

The following theorem provides infinitely many examples of infinitesimally projectively rigid $3$--dimensional manifolds.

\begin{thm}
\label{thm:fillingstrong}
Let $M$ be a compact orientable 3--manifold whose interior is hyperbolic with finite volume.
If $M$ is infinitesimally projectively rigid relative to the cusps, then infinitely many Dehn fillings on $M$ are infinitesimally projectively rigid.
\end{thm}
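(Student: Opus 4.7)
The plan is to combine Thurston's hyperbolic Dehn surgery theorem with a Mayer--Vietoris computation in order to reduce the vanishing of $H^1(N_\alpha;\mathfrak{sl}(4)_{\Ad\rho_\alpha})$ to a transversality statement between two ``Lagrangian'' subspaces of $H^1(T;\mathfrak{sl}(4)_{\Ad\rho_\alpha})$, which one then shows holds for all but finitely many slopes.

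\medskip
\noindent\textbf{Step 1: hyperbolic fillings.} By Thurston's Dehn surgery theorem, for all slopes $\alpha$ outside a finite exceptional set, $N_\alpha = M(\alpha)$ admits a complete hyperbolic structure with holonomy $\rho_\alpha$, and $\rho_\alpha\to\rho_0$ as $\Vert\alpha\Vert\to\infty$ (here $\rho_\alpha$ is regarded as pulled back via $\pi_1(M)\twoheadrightarrow\pi_1(N_\alpha)$). Set $V_\alpha:=\mathfrak{sl}(4)_{\Ad\rho_\alpha}$.

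\medskip
\noindent\textbf{Step 2: Mayer--Vietoris.} Write $N_\alpha = M\cup_T U$, where $U$ is the attached solid torus and $T$ the filled boundary torus. The generator $\mu$ of the filling slope is killed under $\rho_\alpha$, so the restriction $H^0(U;V_\alpha)\to H^0(T;V_\alpha)$ is an isomorphism, while $H^0(M;V_\alpha)=0$ by irreducibility of $\rho_\alpha$. The Mayer--Vietoris sequence therefore collapses to
\[
0\to H^1(N_\alpha;V_\alpha)\to H^1(M;V_\alpha)\oplus H^1(U;V_\alpha)\xrightarrow{\phi_\alpha} H^1(T;V_\alpha),
\]
so $H^1(N_\alpha;V_\alpha)=\ker\phi_\alpha$. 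A direct computation on the solid torus shows that $H^1(U;V_\alpha)\hookrightarrow H^1(T;V_\alpha)$ is injective with image a subspace $L_{U,\alpha}$ of dimension $\tfrac12\dim H^1(T;V_\alpha)$ (the subspace consisting of classes ``trivial on the meridian''). Writing $L_{M,\alpha}$ for the image of $H^1(M;V_\alpha)\to H^1(T;V_\alpha)$, one obtains
\[
\dim H^1(N_\alpha;V_\alpha) = \dim\ker\bigl(H^1(M;V_\alpha)\to H^1(T;V_\alpha)\bigr) + \dim\bigl(L_{M,\alpha}\cap L_{U,\alpha}\bigr).
\]

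\medskip
\noindent\textbf{Step 3: the two vanishings.} The first term vanishes for sufficiently large $\Vert\alpha\Vert$: injectivity of the restriction to $\partial M$ is an open condition on $\rho$, and it holds at $\rho=\rho_0$ by hypothesis; projecting onto the filled cusp (and noting that the other cusps contribute non-degenerately as they do for $\rho_0$) preserves injectivity. The second term, the transversality $L_{M,\alpha}\cap L_{U,\alpha}=0$, is the heart of the matter. By half-lives-half-dies applied to $V_\alpha$ (self-dual via the Killing form), $L_{M,\alpha}$ is itself ``Lagrangian'' of dimension $\tfrac12\dim H^1(T;V_\alpha)$, so transversality is a codimension-zero condition. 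As $\alpha$ varies, $L_{U,\alpha}$ sweeps out an analytic family of subspaces of $H^1(T;V_\alpha)$ (essentially parameterized by the slope in the $\alpha$-direction), and the condition $L_{M,\alpha}\cap L_{U,\alpha}\neq 0$ cuts out a proper analytic (in fact algebraic) subset of slopes. Combined with Step 1 this yields infinitely many admissible $\alpha$.

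\medskip
\noindent\textbf{Main obstacle.} The delicate step is the generic transversality in Step 3. It is not enough to know the Lagrangian $L_{M,\alpha}$ moves analytically with $\alpha$; one must rule out that $L_{M,\alpha}$ systematically contains the filling direction for all slopes. This requires using the hypothesis on $M$ more substantively --- for instance by exhibiting a single slope $\alpha_0$ (or showing that the limiting configuration at $\rho_0$ is transverse) where the condition is non-degenerate, so that the bad locus is a proper analytic subset of the space of slopes.
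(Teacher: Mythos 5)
Your Steps~1--2 are sound and in fact reproduce the paper's own reduction (Lemma~\ref{lemma:fnonzero} and its Mayer--Vietoris argument): rigidity of the filled manifold comes down to the image $L_{M}$ of $H^1(M;\cdot)$ meeting the ``solid torus'' subspace $L_{U}$ transversally inside $H^1(T;\cdot)$, and Poincar\'e duality makes both half-dimensional. The genuine gap is Step~3, i.e.\ exactly what you label the ``main obstacle'': you never show that the bad locus $\{L_{M}\cap L_{U}\neq 0\}$ is \emph{proper}, and this is precisely where the hypothesis that $M$ is infinitesimally projectively rigid must be converted into a concrete non-degeneracy statement. Your proposed fix --- checking transversality ``at the limiting configuration $\rho_0$'' --- fails as stated: at the complete structure the peripheral invariant subspace $\mathfrak v^{\rho_0(\pi_1\partial_i M)}$ is spanned by an infinitesimal parabolic, and the Killing form (hence the pairing underlying your transversality) is degenerate there (Lemma~\ref{lem:orthogonalparabolic}); correspondingly the paper's determinant function satisfies $f(0)=0$, and its zero locus can even be a curve through $0$ (Proposition~\ref{prop:nonrigid}, Section~\ref{sec:orbi}), so no limit or openness argument at $\rho_0$ alone can establish properness. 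A secondary, repairable imprecision: ``injectivity of $H^1(M)\to H^1(\partial M)$ is an open condition'' is not automatic, since both source and target vary; in the paper it follows because the image is always Lagrangian, $\dim H^1(\partial M;\mathfrak v_{\Ad\rho_u})\equiv 2k$ on Thurston's slice (Lemma~\ref{lem:rankone}), and the jump locus of $\dim H^1(M;\mathfrak v_{\Ad\rho_u})$ is a proper analytic subset; likewise ``a proper analytic subset of slopes'' needs the fact that a proper real-analytic subset of the slice misses infinitely many Dehn filling parameters, cf.\ \cite[Lemme~4.4]{Porti}.

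For comparison, the paper closes the gap you flag as follows. Using the symplectic (cup product) structure on $H^1(\partial M;\mathfrak v_{\Ad\rho_0})$, the Lagrangian property of $L_M$, and the explicit parabolic cocycles of Lemma~\ref{lem:angles}, it chooses slopes $\mu_1,\dots,\mu_k$ with $H^1(M;\mathfrak v_{\Ad\rho_0})\hookrightarrow H^1(\mu;\mathfrak v_{\Ad\rho_0})$ (Lemma~\ref{lem:almosteveryslope}), equivalently $\dim H^1(M,\mu;\mathfrak v_{\Ad\rho_0})=3k$. By semicontinuity this persists along the path of hyperbolic cone structures of small cone angle $\alpha$ on these slopes, and at such a point the non-degeneracy is verified \emph{explicitly}: the classes $z^j$ restrict as $\hat\mu_j\otimes a^j$ and $f(\alpha\,\mathbf i)=\tfrac{\alpha^k}{(2\pi)^k}B(a^1,a^1)\cdots B(a^k,a^k)\neq0$, because for non-parabolic peripheral holonomy the Killing form is nondegenerate on the invariant line (Lemma~\ref{lem:invkilling}); this is Proposition~\ref{prop:conerigid}. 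Only then does analyticity of $f$ on the slice, together with Corollary~\ref{cor:giverigidity}, give infinitely many rigid fillings. So your outline has the right skeleton, but the step you defer is the actual content of the theorem, and the specific shortcut you suggest for it does not work.
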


In his notes \cite{ThurstonNotes} Thurston defines the \emph{hyperbolic Dehn filling space}. 
He uses this space to prove that,  for all but a finite number of filling slopes on each boundary component, the
$3$--manifolds obtained by Dehn filling on $M$ are hyperbolic. The hyperbolic Dehn filling space is a subset  of the generalized hyperbolic Dehn filling coefficients 
and it is described in   Definition~\ref{def:hyp_dehn_filling_space} below, cf.\  \cite{NZ}.
The methods of Theorem \ref{thm:fillingstrong},
give the following:
\begin{thm}
\label{thm:fromonetomany}
Let $M$ be a compact orientable 3--manifold whose interior is hyperbolic with cusps. 
If a hyperbolic Dehn filling $N$ on $M$ satisfies:
\begin{itemize}
\item[(i)] $N$ is infinitesimally projectively rigid,
\item[(ii)] the Dehn filling slope of $N$ is contained in the (connected) hyperbolic Dehn filling space of $M$,
\end{itemize}
then infinitely many Dehn fillings on $M$ are infinitesimally projectively rigid.
\end{thm}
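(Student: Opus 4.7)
I would reduce the theorem to Theorem~\ref{thm:fillingstrong} by showing that hypotheses (i) and (ii) together imply that the cusped manifold $M$ itself is infinitesimally projectively rigid.

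Condition (ii) provides a real-analytic family of holonomy representations $\rho_\theta \co \pi_1(M) \to PSO(3,1) \subset PGL(4)$ for $\theta \in [0, 2\pi]$, with $\rho_0$ the complete holonomy of $M$ and $\rho_{2\pi}$ the pullback, via $\pi_1(M) \twoheadrightarrow \pi_1(M(\alpha))$, of the hyperbolic holonomy of the Dehn filling $M(\alpha)$ given in (i). At the endpoint $\theta = 2\pi$ I would apply Mayer--Vietoris to the decomposition $M(\alpha) = M \cup T$, where $T$ is the union of filling solid tori, with twisted coefficients $V = \mathfrak{sl}(4)_{\Ad \rho_{2\pi}}$:
\[
H^1(M(\alpha); V) \to H^1(M; V) \oplus H^1(T; V) \to H^1(\partial M; V).
\]
Hypothesis (i) gives $H^1(M(\alpha); V) = 0$, so the restriction $H^1(M; V) \to H^1(\partial M; V)$ is injective at $\rho_{2\pi}$.

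Next I propagate this injectivity along the cone path down to $\theta = 0$. Setting
\[
K_\theta := \ker\bigl(H^1(M; \mathfrak{sl}(4)_{\Ad \rho_\theta}) \to H^1(\partial M; \mathfrak{sl}(4)_{\Ad \rho_\theta})\bigr),
\]
the function $\theta \mapsto \dim K_\theta$ is upper semi-continuous, and since $\{\rho_\theta\}$ is a real-analytic one-parameter family the locus $\{\theta : K_\theta \neq 0\}$ is a proper analytic subset of $[0, 2\pi]$, hence discrete. Combined with $\dim K_{2\pi} = 0$ from the previous paragraph, this yields $K_\theta = 0$ for all but finitely many $\theta$. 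The critical remaining step is to show that $\theta = 0$ is itself not exceptional. For this I would use $\chi(M) = 0$ together with Poincaré--Lefschetz duality for compact 3-manifolds with toral boundary to express $\dim H^1(M; V_{\rho_\theta})$ in terms of $\dim H^0$ and boundary data, and compute $H^*(\partial M; \mathfrak{sl}(4))$ at the parabolic limit directly (where the $\mathfrak{sl}(4)$-module structure on the cusp cross-section degenerates in a controlled way). The goal is to show that the jumps in $\dim H^1(M;\cdot)$ and in $\dim H^1(\partial M;\cdot)$ at $\theta = 0$ cancel against one another, so that $K_0 = 0$.

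The main obstacle is precisely this endpoint analysis at $\theta = 0$: at the cusped limit the meridian holonomies become parabolic, enlarging both cohomology groups, and one must verify that the kernel of the restriction does not jump. Once $K_0 = 0$ is established, $M$ is infinitesimally projectively rigid, and Theorem~\ref{thm:fillingstrong} immediately produces infinitely many infinitesimally projectively rigid Dehn fillings on $M$.
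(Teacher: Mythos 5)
Your reduction to Theorem~\ref{thm:fillingstrong} hinges on the claim you yourself flag as the ``critical remaining step'': that $K_0=0$, i.e.\ that hypotheses (i) and (ii) force the cusped manifold $M$ itself to be infinitesimally projectively rigid. This is a genuine gap, and the tools you propose cannot close it. Semicontinuity runs the wrong way at the endpoint: along the analytic cone path the dimension of $H^1(M;\mathfrak{sl}(4)_{\Ad\rho_\theta})$ can jump \emph{up} precisely at the exceptional parameter $\theta=0$, and nothing in (i) or (ii) forbids that. Your hoped-for cancellation of jumps is not available either: the boundary group does not jump at the parabolic limit --- by Lemma~\ref{lem:invkilling}, Lemma~\ref{lem:invparabolic} and Lemma~\ref{lem:rankone} one has $\dim H^1(\partial M;\mathfrak{v}_{\Ad\rho_u})=2k$ (and likewise the $\mathfrak{so}(3,1)$ part is constant) for \emph{every} $u$, including the complete structure --- while $\chi(M)=0$ together with Poincar\'e--Lefschetz duality only yields that the image of the restriction $H^1(M;\cdot)\to H^1(\partial M;\cdot)$ is Lagrangian, hence $k$-dimensional, at every parameter (Lemma~\ref{lem:rankone}(ii)); duality says nothing about the kernel, which is exactly the quantity $K_0$ you need to kill. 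In view of Corollary~\ref{cor:dim}, proving $K_0=0$ amounts to proving that $M$ is infinitesimally projectively rigid, and the whole point of Theorem~\ref{thm:fromonetomany} is to dispense with that hypothesis; if it followed formally from (i)+(ii) the theorem would be an immediate consequence of Theorem~\ref{thm:fillingstrong}, which it is not.

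The paper's proof avoids the endpoint issue altogether: it uses the cone path only to extend the domain where the determinant function $f$ of Section~\ref{sec:function} can be defined, covering $[0,2\pi]$ by subintervals on which the classes of Lemma~\ref{lem:cohomologyclasses} vary analytically, and propagates the two conditions ``$f\not\equiv 0$'' and ``generic $\dim H^1(M;\mathfrak{v}_{\Ad\rho_u})=k$'' from the filled parameter (where they hold by your Mayer--Vietoris argument, which is fine) back to a neighbourhood of $0$; then Corollary~\ref{cor:giverigidity} gives infinitely many rigid fillings, since the bad locus is a proper analytic subset of $U$. These conditions are generic ones near $0$ and are perfectly compatible with a jump of $\dim H^1(M;\mathfrak{v}_{\Ad\rho_0})$ at $0$ itself. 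Your propagation along the path is close in spirit to this, so the fix is to redirect it: instead of trying to prove $K_0=0$ and invoke Theorem~\ref{thm:fillingstrong}, verify the hypotheses of Corollary~\ref{cor:giverigidity} (taking care, as the paper does, that the cocycles and invariant vectors entering $f$ are only locally analytic along the path, whence the finite covering by intervals).
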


By Hodgson and Kerckhoff estimation of the size of the hyperbolic Dehn filling space \cite{HodgsonKerckhoff}, 
in a one cusped manifold all but at most 60 
topological Dehn fillings have a hyperbolic structure that lies in the hyperbolic Dehn filling space. Hence:

\begin{cor}
\label{cor:sixtyinfty}
Let $M$ be a one cusped hyperbolic manifold of finite volume. If 61 Dehn fillings on $M$ are either
non-hyperbolic or infinitesimally projectively rigid, then
infinitely many fillings are so.
\end{cor}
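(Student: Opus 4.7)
The plan is a short pigeonhole argument that combines Theorem~\ref{thm:fromonetomany} with the Hodgson--Kerckhoff estimate cited just above the corollary. The key preliminary observation is that condition (ii) of Theorem~\ref{thm:fromonetomany} --- namely the existence of a one-parameter family of hyperbolic cone structures on $M$ with cone angle decreasing from $2\pi$ to $0$ --- can only be satisfied by a \emph{hyperbolic} Dehn filling, since the cone angle $2\pi$ corresponds to the Dehn-filled hyperbolic manifold itself. Consequently the set of Dehn fillings on $M$ that fail (ii) automatically contains every non-hyperbolic filling, and the Hodgson--Kerckhoff estimate tells us that this total failure set has cardinality at most $60$.

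Now assume, as in the hypothesis of the corollary, that we are given $61$ distinct Dehn fillings on $M$ each of which is either non-hyperbolic or infinitesimally projectively rigid. Since at most $60$ of these can fail condition (ii), at least one of them, call it $M_s$, does satisfy (ii). By the observation above $M_s$ is then hyperbolic, so our hypothesis forces $M_s$ to be infinitesimally projectively rigid; in other words $M_s$ also satisfies condition (i). Applying Theorem~\ref{thm:fromonetomany} to $M$ with $M_s$ as the initial filling satisfying both (i) and (ii) immediately produces infinitely many infinitesimally projectively rigid Dehn fillings on $M$, which is the conclusion sought.

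The argument is light enough that there is no serious obstacle beyond bookkeeping. The only point to verify with care is that the Hodgson--Kerckhoff bound of $60$ is indeed a bound on the full set of slopes for which the cone-angle deformation of (ii) does not exist, including the non-hyperbolic slopes; this is the natural reading of the citation in \cite{HodgsonKerckhoff}, and in any case only finitely many fillings are non-hyperbolic by Thurston's hyperbolic Dehn surgery theorem, so any slight re-interpretation of the counting would at worst change the constant $61$ by a bounded amount and would not affect the structure of the proof.
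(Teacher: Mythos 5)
Your proposal is correct and is essentially the paper's own argument: the paper deduces the corollary from Theorem~\ref{thm:fromonetomany} together with the Hodgson--Kerckhoff bound that the cone-angle deformation of condition (ii) exists for all but at most 60 fillings, exactly the pigeonhole reasoning you give.
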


Those results are proved using the fact that all 
parameters of Thurston's hyperbolic Dehn filling space corresponding to non infinitesimally projectively rigid fillings on $M$ are contained in a proper analytic subset of the Dehn filling space, provided $M$ itself is infinitesimally projectively rigid.
This technique goes back to Kapovich in the setting of deformations of lattices of $PSO(3,1)$ in $PSO(4,1)$ \cite{Kapovich}.

Moreover, we obtain explicit examples of infinite families of infinitesimally projectively rigid manifolds. The Dehn filling parameters of these families lie on certain real analytic curves, and a careful analysis of the infinitesimal deformations of the corresponding manifolds results in the following proposition:
\begin{prop}
\label{prop:fig8explicit}
For a  sufficiently large positive integer $n$, the  homology sphere obtained by $1/n$--Dehn filling on the figure eight knot is  infinitesimally projectively rigid.

In fact, for every $k\in\mathbf Z$, $k\neq 0$, there exists $n_{k}>0$ such that if $n\geq n_k$ then the 
$k/n$--Dehn filling on the figure eight knot is  infinitesimally projectively rigid.
\end{prop}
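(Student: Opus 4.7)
The plan is to combine the strategy behind Theorem~\ref{thm:fillingstrong} with explicit computations on the deformation space of the figure eight knot exterior $M$. By Thurston's hyperbolic Dehn surgery theorem, a neighborhood of the complete structure in the deformation space is parametrized holomorphically by the logarithm $u$ of the meridian eigenvalue of the holonomy, with $u=0$ at the complete structure and with longitude logarithm $v=\tau(u)\,u$ for a holomorphic function $\tau$. The hyperbolic $k/n$-filling corresponds to the small solution $u_{k,n}$ of the transcendental equation $k\,u + n\,\tau(u)\,u = 2\pi i$, and $u_{k,n}\to 0$ as $n\to\infty$ for any fixed $k\neq 0$.

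First I would verify that the figure eight knot exterior $M$ is itself infinitesimally projectively rigid relative to the cusp, that is, that the restriction $H^1(M;\mathfrak{sl}(4)_{\Ad\rho})\to H^1(\partial M;\mathfrak{sl}(4)_{\Ad\rho})$ is injective. Because $\pi_1(M)$ admits the two-generator one-relator presentation $\langle a,b\mid aw=wa\rangle$ with $w$ an explicit word, this reduces to a finite linear-algebra computation via Fox calculus, best organized by decomposing $\mathfrak{sl}(4)$ as an $\mathfrak{so}(3,1)$-module under the adjoint action of $PSO(3,1)$. This verification is the main technical obstacle, since it both allows one to invoke Theorem~\ref{thm:fillingstrong} and pins down the dimension and concrete structure of the image in boundary cohomology.

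Granted rigidity relative to the cusp, the proof of Theorem~\ref{thm:fillingstrong} encodes the obstruction to infinitesimal rigidity of the $k/n$-filling as the vanishing of an analytic function $F_k$ on the $u$-parameter space: concretely, $M_{k/n}$ admits a nontrivial first-order deformation iff some class in $H^1(M;\mathfrak{sl}(4)_{\Ad\rho})$ has boundary restriction annihilated by the linear functional on $H^1(\partial M;\mathfrak{sl}(4)_{\Ad\rho})$ determined by the slope $(k,n)$. The task is then to show $F_k\not\equiv 0$ for each $k\neq 0$, which one accomplishes by evaluating its leading behaviour at $u=0$: using the explicit form of the injective restriction map from the previous step, the linear condition imposed by $(k,n)$ as $n\to\infty$ has, to leading order, no nonzero solution in the image of the restriction as soon as $k\neq 0$, so $F_k(0)\neq 0$.

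Since $F_k$ is holomorphic with $F_k(0)\neq 0$, its zero set does not accumulate at the cusp; hence some $n_k$ has $F_k(u_{k,n})\neq 0$ for all $n\geq n_k$, which is exactly the statement that the $k/n$-filling is infinitesimally projectively rigid. Taking $k=1$ specializes to the assertion about $1/n$-surgeries, which are integer homology spheres. The principal difficulty lies in the initial Fox-calculus verification of rigidity relative to the cusp, together with the explicit leading-order evaluation of $F_k$ at $u=0$, both of which require exploiting concrete features of the figure eight knot group and the adjoint decomposition of $\mathfrak{sl}(4)$.
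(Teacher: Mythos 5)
Your overall architecture (explicit cohomology computation for the figure eight knot exterior, then an analytic non-vanishing argument along the family $k/n$) matches the paper, and your first step --- the Fox-calculus verification that $\dim H^1(\Gamma;\mathfrak{v}_{\Ad\rho_0})=1$, hence $M$ is infinitesimally projectively rigid by Corollary~\ref{cor:dim} --- is exactly what the paper does. The gap is in the key step ``$F_k(0)\neq 0$''. The natural obstruction function on the deformation space \emph{always} vanishes at the complete structure: the normalized slope cycle $\frac{p\mu+q\lambda}{|p+q\tau|^2}$ tends to $0$ as $u\to 0$, and the invariant vector $a_u\in\mathfrak{v}^{\rho_u(\pi_1\partial M)}$ degenerates to an infinitesimal parabolic, which is null for the Killing form (Lemma~\ref{lem:orthogonalparabolic}); the paper records $f(0)=0$ explicitly, and Proposition~\ref{prop:nonrigid} shows the zero locus can even be a curve through $0$, so no ``holomorphic and nonzero at the cusp, hence zeros do not accumulate'' argument is available. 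What one must do instead (Proposition~\ref{prop:flexingslope}) is restrict to the real-analytic path of parameters with Dehn filling coefficients $(k,1/s)$, renormalize by $s\,\Vert a_{u(s)}\Vert$, and compute the limit as $s\to 0$ via the Taylor expansion $a_u=p(u)+4\sinh^2(\frac u2)V_0$ (Lemma~\ref{lem:vutaylor}) and Lemma~\ref{lem:Bsto0}. Note also that the relevant structure is only real-analytic, not holomorphic, since $(p,q)$ is not a holomorphic function of $u$.

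Moreover, the nonvanishing of that limit is \emph{not} a consequence of rigidity of $M$ plus $k\neq 0$: as $n\to\infty$ the slope direction tends to the longitude regardless of $k$, so the limiting linear condition is ``restriction to the longitude vanishes'', and the leading coefficient is nonzero exactly when the one-dimensional image of $H^1(M;\mathfrak{v})$ in $H^1(\partial M;\mathfrak{v})$ restricts nontrivially to the longitude, i.e.\ when the longitude is a flexing slope. Rigidity alone only guarantees (Lemma~\ref{lem:almosteveryslope}) that \emph{some} slope is flexing, so this must be checked by a second explicit computation: the paper exhibits a cocycle $d$ with $d(y)=0$, computes $d(l)$ by Fox calculus, and verifies $B(d(l),a)=-16\neq 0$ (Remark~\ref{rem:flex}), after which Proposition~\ref{prop:flexingslope} (together with the semicontinuity argument ensuring $\dim H^1(M;\mathfrak{v}_{\Ad\rho_{u}})=1$ at the filling parameters, needed for Lemma~\ref{lemma:fnonzero}) yields the result. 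Your proposal neither isolates this flexing-slope condition nor explains why the leading term would be nonzero ``as soon as $k\neq 0$'', so as written the central step is unsupported.
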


{Theorem~\ref{thm:fillingstrong} provides infinitely many rigid Dehn fillings. On can ask whether there are still infinitely many non-rigid Dehn fillings. 
Though we do not have an example for manifolds, the following proposition shows that there are infinitely many non-rigid orbifolds obtained by Dehn 
fillings on the cusped manifold that satisfies the hypothesis of Theorem~\ref{thm:fillingstrong}.
\begin{prop}\label{prop:nonrigid}
The orbifold $\mathcal O_n$ with underlying space $S^3$, singular locus the figure eight knot and ramification index $n$ is not locally projectively rigid for sufficiently large $n$.
More precisely, its deformation space is a curve.
\end{prop}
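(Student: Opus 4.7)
\emph{Proof strategy.} The plan is to exhibit a nontrivial one-parameter family of projective deformations of the canonical structure on $\mathcal{O}_n$ and to show that the local deformation space is exactly one-dimensional. I would begin by identifying $\pi_1(\mathcal{O}_n)=\pi_1(M)/\langle\!\langle \mu^n\rangle\!\rangle$, where $M$ denotes the figure eight knot exterior and $\mu$ a meridian, and fixing the canonical holonomy $\rho_n\co\pi_1(\mathcal{O}_n)\to PO(3,1)\subset PGL(4,\mathbf R)$ sending $\mu$ to an elliptic rotation of angle $2\pi/n$; this representation exists and is faithful for $n$ large by the orbifold theorem.

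The main computation is the infinitesimal deformation space $H^1(\pi_1(\mathcal{O}_n);\mathfrak{sl}(4)_{\Ad\rho_n})$. Since $\mathcal O_n$ is a closed hyperbolic orbifold, Weil--Garland rigidity gives $H^1(\pi_1(\mathcal{O}_n);\mathfrak{so}(3,1)_{\Ad\rho_n})=0$, so via the splitting $\mathfrak{sl}(4)=\mathfrak{so}(3,1)\oplus V$ as $PO(3,1)$-modules, with $V$ the irreducible $9$-dimensional complement, it suffices to control $H^1(\pi_1(\mathcal{O}_n);V_{\Ad\rho_n})$. I would set up a Mayer--Vietoris sequence for the decomposition of $\mathcal{O}_n$ as $M$ glued along the peripheral torus $T^2$ to a solid cone orbifold with fundamental group $\mathbf Z/n\times\mathbf Z$, expressing the cohomology in terms of $H^{*}(M;V)$, $H^{*}(T^2;V)$ and $H^{*}(\mathbf Z/n\times\mathbf Z;V)$. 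The first two pieces are already controlled by the explicit peripheral analysis carried out earlier in the paper for the figure eight knot, and the cone-orbifold contribution is computed directly from the eigenvalues of $\Ad\rho_n(\mu)$ on $V$.

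To pass from the infinitesimal class to an actual curve of deformations, I would either verify the vanishing of the cup-product obstruction $H^1\cup H^1\to H^2$ on the generator, or, more directly, produce an explicit one-parameter family of representations by solving the defining relations in $PGL(4,\mathbf R)$ using the two-generator presentation of the figure eight knot group that underlies the paper's explicit analysis. Either route, combined with a one-dimensional $H^1$, yields that the deformation space is a smooth curve near $\rho_n$.

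The main obstacle will be the dimension count: one must show that $\dim H^1(\pi_1(\mathcal{O}_n);V)=1$ uniformly for large $n$, neither zero (which would contradict non-rigidity) nor larger (which would leave open the possibility of higher-dimensional deformations). This reduces to a rank computation for a Fox-derivative matrix whose entries depend on $n$ through the eigenvalues $e^{\pm 2\pi i/n}$ of $\Ad\rho_n(\mu)$ on $V$; one expects the rank to stabilize once $n$ is large enough to avoid the finitely many resonant values, producing exactly a one-dimensional kernel.
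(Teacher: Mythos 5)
There is a genuine gap in both halves of your argument. For the dimension count, a Mayer--Vietoris sequence for $\mathcal O_n=M\cup N_n$ is indeed the right skeleton, but knowing the dimensions of $H^1(M;\mathfrak v)$, $H^1(\partial M;\mathfrak v)$ and $H^1(N_n;\mathfrak v)$ is not enough: what decides $\dim H^1(\mathcal O_n;\mathfrak v)$ is the \emph{relative position} of the two images (each a line, by Lemma~\ref{lem:rankone}) inside the two-dimensional space $H^1(\partial M;\mathfrak v_{\rho_{1/n}})$. Generically these two lines are transverse and the Mayer--Vietoris sequence gives $H^1(\mathcal O_n;\mathfrak v)=0$; indeed this transversality (the non-vanishing of the function $f$) is exactly what makes the nearby \emph{manifold} fillings of Proposition~\ref{prop:fig8explicit} infinitesimally rigid. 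So your expectation that a Fox-derivative rank computation "stabilizes" to a one-dimensional kernel for large $n$ is unsupported and runs against the generic behaviour; you need a specific mechanism forcing the image of $H^1(M;\mathfrak v_{\rho_{1/n}})$ to kill the meridian direction. The paper obtains this from the amphicheirality of the figure eight knot: the automorphism $\varphi$ with $\varphi(m)=m^{-1}$, $\varphi(l)=l$ induces an involution on cohomology which, because the longitude is the flexing slope (Remark~\ref{rem:flex}), acts as $+\Id$ on $H^1(M;\mathfrak v_{\rho_{1/n}})$ but as $-\Id$ on $H^1(m;\mathfrak v_{\rho_{1/n}})$ (Lemma~\ref{lem:2}, Corollary~\ref{cor:3}); hence the restriction of $H^1(M)$ to the meridian vanishes, the two Lagrangian images coincide, and $\dim H^1(\mathcal O_n;\mathfrak v_{\rho_{1/n}})=1$ with $H^2$ also one-dimensional (Proposition~\ref{prop:5}).

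The second gap is in the passage from a one-dimensional $H^1$ to an actual curve of deformations. Vanishing of the cup product $v\cup v$ is only the \emph{first} obstruction; with $H^2\neq 0$ (it is one-dimensional here) you must also kill all higher Massey products, and then convert formal integrability into genuine integrability. Your claim that "either route, combined with a one-dimensional $H^1$, yields that the deformation space is a smooth curve" is therefore not correct as stated for the cup-product route, and the alternative of exhibiting an explicit one-parameter family of $PGL(4,\mathbf R)$-representations for all large $n$ is not carried out and would be substantially harder than the problem you are trying to solve. The paper's proof handles all orders at once by the same symmetry: since $\varphi^*$ acts by $+\Id$ on $H^1$ and by $-\Id$ on $H^2$, every obstruction class is simultaneously fixed and negated, hence zero; Artin's theorem then upgrades the resulting formal deformation to an actual one, and together with $\dim H^1=1$ this gives that the deformation space is a curve. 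Without an argument covering the higher obstructions (or an explicit family), your proposal does not establish non-rigidity.
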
}

For any $n\in\mathbf N$, the Fibonacci manifold $M_n$ is the  cyclic cover of order $n$ of the orbifold $\mathcal O_n$ in Proposition~\ref{prop:nonrigid} \cite{HKM}.
 Hence $M_n$ is not projectively rigid, as deformations of the projective structure of $\mathcal O_n$ induce  deformations of $M_n$. There is an abundant literature about
 those manifolds. For instance, $M_4$ is not Haken but $M_n$ is Haken for $n\geq 5$, and Scannell has proved that they are not infinitesimally rigid in $SO(4,1)$ \cite{Scannell}.

Using that punctured torus bundles with tunnel number one are obtained by 
$n$--Dehn filling on the Whitehead link (cf.\ \cite{Akiyoshi}), we shall prove:

\begin{prop}
\label{prop:twist}
All but finitely many punctured torus bundles with tunnel number one are infinitesimally projectively rigid relative to the cusps.

All but finitely many twist knots complements are infinitesimally projectively rigid relative to the cusps.
\end{prop}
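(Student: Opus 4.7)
The plan is to adapt the strategy of Proposition~\ref{prop:fig8explicit}, with the Whitehead link exterior, which I shall denote by $W$, playing the role of the figure eight knot exterior. By Akiyoshi \cite{Akiyoshi}, every punctured torus bundle with tunnel number one arises as a partial Dehn filling on one cusp of $W$, leaving the second cusp unfilled; the filling parameter on the chosen cusp is of the form $(1,n)$ with $n\in\mathbf{Z}$. A similar one-parameter family of partial Dehn fillings on one cusp of $W$ produces the twist knot exteriors.

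The first ingredient is the infinitesimal projective rigidity of $W$ relative to its two cusps, which I expect to establish by the explicit cohomological computation announced for the Whitehead link in the abstract. Granting this, I would then invoke the analytic-subset argument underlying Theorem~\ref{thm:fillingstrong}: the partial Dehn fillings of one cusp of $W$ are parametrized by a (complex one-dimensional) Dehn filling space $\mathcal{D}$, and the subset $\mathcal{B}\subset\mathcal{D}$ of parameters yielding a partial filling that fails to be infinitesimally projectively rigid is a proper analytic subset of $\mathcal{D}$. Since $W$ itself corresponds to the distinguished ``cusp point'' at infinity of $\mathcal{D}$ and is rigid by assumption, this cusp point admits a punctured neighborhood in $\mathcal{D}$ entirely disjoint from $\mathcal{B}$.

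The slopes $(1,n)$ on the chosen cusp converge to the cusp point of $\mathcal{D}$ as $|n|\to\infty$ (this is Thurston's parametrization of hyperbolic Dehn surgery). Hence all but finitely many values of $n$ yield partial fillings outside $\mathcal{B}$, which is exactly the first assertion of the proposition. The same reasoning applied to the one-parameter integer family of Dehn fillings realizing the twist knot complements gives the second assertion.

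The main obstacle is the explicit verification that the two-cusped manifold $W$ is infinitesimally projectively rigid relative to its boundary: one must compute $H^1(W;\mathfrak{sl}(4)_{\Ad\rho})$ together with the restriction map to the two boundary tori, and show that this restriction is injective. This is considerably more delicate than the corresponding one-cusp computation for the figure eight knot, since the source and target both have larger dimension and the restriction must be checked on each cusp separately. A secondary technical point, already handled in the proof of Theorem~\ref{thm:fillingstrong}, is the identification of $\mathcal{B}$ as an analytic subset and the verification that the rigidity of $W$ at the cusp point of $\mathcal{D}$ implies that $\mathcal{B}$ does not accumulate there.
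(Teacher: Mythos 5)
Your reduction hinges on the claim that, because the non-rigid locus $\mathcal B$ is (contained in) a proper analytic subset of the Dehn filling space $\mathcal D$ and the complete structure on $W$ is rigid, the cusp point of $\mathcal D$ has a \emph{punctured neighborhood disjoint from} $\mathcal B$. That inference is the gap, and it is not a secondary technical point handled by Theorem~\ref{thm:fillingstrong}: a proper analytic subset can perfectly well be a curve passing through the cusp point, in which case it meets every punctured neighborhood and may contain infinitely many integer filling parameters. This is exactly the situation the paper warns about: the obstruction function $f$ of Section~\ref{sec:function} always satisfies $f(0)=0$, and the remark after its definition together with Proposition~\ref{prop:nonrigid} shows that $f^{-1}(\{0\})$ can be one-dimensional through $0$ (for the figure-eight knot, which \emph{is} infinitesimally projectively rigid, the orbifold surgeries with parameters $(n,0)$ converge to the cusp point and are not rigid). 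Rigidity of the cusped manifold therefore only yields, via Theorem~\ref{thm:fillingstrong} and \cite[Lemme~4.4]{Porti}, that \emph{infinitely many} fillings are rigid; it does not give rigidity of all but finitely many members of a prescribed family of slopes, which is what Proposition~\ref{prop:twist} asserts. (If your punctured-neighborhood claim were valid, Theorem~\ref{thm:fillingstrong} would already give ``all but finitely many'' for every rigid cusped manifold, and the flexing-slope machinery of Section~\ref{sec:flexingslopes} would be pointless.)

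What is actually needed is a proof that $f$ does not vanish along the tail of the specific line of slopes, and here the two families must be treated differently --- a distinction your uniform argument misses (you also interchange the conventions: the punctured torus bundles with tunnel number one are the $(n,1)$ fillings and the twist knots the $(1,n)$ fillings). For the $(n,1)$ family the direction of the line is a flexing slope: bending along the totally geodesic thrice-punctured sphere, combined with the fact that the cusp shape lies in $\mathbf Z[\mathbf i]$ and Lemma~\ref{lem:angles}, shows that every slope on $\partial_1 M$ other than the longitude is flexing (Lemma~\ref{lem:longitudeWhitehead}), and Proposition~\ref{prop:flexingslope} then gives the asymptotic non-vanishing of $f$ along the line, hence rigidity for all large $n$. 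For the $(1,n)$ family the limiting direction is the longitude, which is \emph{not} flexing, so that asymptotic argument fails; instead one uses that the whole ray $(1,s)$, $s\geq 1$, lies in the surgery space and passes through $(1,1)$, i.e.\ through the figure-eight knot exterior, which is infinitesimally projectively rigid, and an analyticity argument along this compact path as in Theorem~\ref{thm:fromonetomany} excludes all but finitely many zeros of $f$ on the ray. So besides the explicit computation showing $\dim H^1(W;\mathfrak v_{\Ad\rho})=2$ (which you correctly identify as an ingredient), the missing ideas are the flexing-slope criterion for the first family and the figure-eight anchor point for the second.
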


\medskip

The real hyperbolic space $\mathbf H^3$ naturally embeds in the
complex hyperbolic space $\mathbf H_{\mathbf C}^3$.
We may study  the corresponding  deformation theory 
coming from viewing  $PSO(3,1)= \operatorname{Isom}^+(\mathbf H^3)$ in $PSU(3,1)=\operatorname {Isom}_0(\mathbf H^3_{\mathbf C})$, i.e. the identity component of complex hyperbolic isometries.

\begin{definition}
We say that $M$ is \emph{infinitesimally  $\mathbf H^3_{\mathbf C}$--rigid} relative to the cusps if the sequence
$$
0\to H^1(M;\mathfrak{su}(3,1)_{\Ad\rho})\to H^1(\partial M;\mathfrak{su}(3,1)_{\Ad\rho})
$$
is exact
\end{definition}

In particular, if $\partial M=\emptyset$, then we require $H^1(M;\mathfrak{su}(3,1)_{\Ad\rho})=0$.
The study of deformations in $PGL(4)$ and $PSU(3,1)$ are related, as we shall see in 
Subsection~\ref{subsectioncomplex}. In particular we have the following theorem of
Cooper, Long and Thistlethwaite.

\begin{thm}\cite{CLTGT}
\label{thm:projectivehyperbolic}
Let $M^n$ be a real hyperbolic manifold of finite volume, $n\geq 3$. 
Then $M^n$ is infinitesimally projectively rigid relative to the cusps
if and only if $M^n$ is infinitesimally $\mathbf H^n_{\mathbf C}$--rigid  relative to the cusps.
\end{thm}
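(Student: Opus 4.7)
The plan is to exploit the parallel Cartan-type decompositions of $\mathfrak{sl}(n+1,\mathbf R)$ and $\mathfrak{su}(n,1)$ under the adjoint action of $SO(n,1)\subset PSO(n,1)$, and then reduce the equivalence to Garland--Weil infinitesimal rigidity of the $\mathfrak{so}(n,1)$-summand.

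First, I would fix a $(n,1)$-form $J$ so that $\mathfrak{so}(n,1)=\{X\in\mathfrak{sl}(n+1,\mathbf R):X^TJ+JX=0\}$ and write $V\subset\mathfrak{sl}(n+1,\mathbf R)$ for the space of trace-zero matrices that are \emph{$J$-symmetric}, i.e.\ satisfy $X^TJ=JX$. This is the orthogonal complement of $\mathfrak{so}(n,1)$ with respect to the Killing form, so
\[
\mathfrak{sl}(n+1,\mathbf R)=\mathfrak{so}(n,1)\oplus V
\]
as a direct sum of $SO(n,1)$-modules under the adjoint action. Decomposing $X\in\mathfrak{sl}(n+1,\mathbf C)$ into real and imaginary parts $X=A+iB$ and writing out the defining condition $X^*J+JX=0$ of $\mathfrak{su}(n,1)$ shows that $A\in\mathfrak{so}(n,1)$ and $B\in V$; hence
\[
\mathfrak{su}(n,1)=\mathfrak{so}(n,1)\oplus iV.
\]
Since $SO(n,1)$ acts on both summands by the real adjoint representation and multiplication by $i$ is $SO(n,1)$-equivariant, $iV\cong V$ as $SO(n,1)$-modules. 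Thus, after composing with $\rho$, both $\mathfrak{sl}(n+1)_{\Ad\rho}$ and $\mathfrak{su}(n,1)_{\Ad\rho}$ split as $\mathfrak{so}(n,1)_{\Ad\rho}\oplus V_{\Ad\rho}$ as $\pi_1(M)$-modules.

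Next, the decomposition is natural, so it descends to cohomology with coefficients in these local systems and is compatible with the restriction morphism induced by $\partial M\hookrightarrow M$. One therefore obtains a commutative diagram
\[
\begin{CD}
H^1(M;\mathfrak{sl}(n+1)_{\Ad\rho}) @= H^1(M;\mathfrak{so}(n,1)_{\Ad\rho})\oplus H^1(M;V_{\Ad\rho})\\
@VVV @VVV \\
H^1(\partial M;\mathfrak{sl}(n+1)_{\Ad\rho}) @= H^1(\partial M;\mathfrak{so}(n,1)_{\Ad\rho})\oplus H^1(\partial M;V_{\Ad\rho})
\end{CD}
\]
and an identical diagram with $\mathfrak{su}(n,1)$ replacing $\mathfrak{sl}(n+1)$. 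Injectivity of the left vertical arrow is then equivalent to injectivity of \emph{both} summand maps.

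The final ingredient is the classical Garland--Weil infinitesimal rigidity theorem for finite volume real hyperbolic $n$-manifolds with $n\geq 3$, which asserts that the restriction
\[
H^1(M;\mathfrak{so}(n,1)_{\Ad\rho})\longrightarrow H^1(\partial M;\mathfrak{so}(n,1)_{\Ad\rho})
\]
is injective (and vanishes when $\partial M=\emptyset$). Granted this, injectivity of each total map reduces to injectivity of the $V_{\Ad\rho}$-summand, and so
\[
\text{$M^n$ is infinitesimally projectively rigid}\iff H^1(M;V_{\Ad\rho})\hookrightarrow H^1(\partial M;V_{\Ad\rho}),
\]
and the same equivalence holds for infinitesimal $\mathbf H^n_{\mathbf C}$-rigidity. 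Both notions are therefore equivalent to the same condition on $V_{\Ad\rho}$, which proves the theorem.

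The only delicate step is the invocation of Garland--Weil rigidity in the cusped finite volume case; on the algebraic side, the splittings are immediate from the Cartan decomposition and require no computation, which is why the reduction is clean.
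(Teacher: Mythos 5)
Your argument is correct, and the algebraic core is exactly the paper's: the splitting $\mathfrak{sl}(n+1)=\mathfrak{so}(n,1)\oplus \mathfrak v$ together with $\mathfrak{su}(n,1)=\mathfrak{so}(n,1)\oplus \mathbf i\,\mathfrak v$ and the $SO(n,1)$-equivariance of multiplication by $\mathbf i$ (Equations (5) and (6) in the text). Where you diverge is at the end: you split the restriction map into its two summands and then quote Garland--Weil rigidity to dispose of the $\mathfrak{so}(n,1)$-summand, concluding that each rigidity notion is equivalent to injectivity of $H^1(M;\mathfrak v_{\Ad\rho})\to H^1(\partial M;\mathfrak v_{\Ad\rho})$. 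That detour is logically sound (Garland--Weil does hold for finite volume hyperbolic $n$-manifolds, $n\ge 3$, including the closed case), but it is not needed: since $\mathfrak{sl}(n+1)_{\Ad\rho}$ and $\mathfrak{su}(n,1)_{\Ad\rho}$ are isomorphic as $\pi_1(M)$-modules, the two restriction maps are intertwined by natural isomorphisms on source and target, so one is injective if and only if the other is --- this is the paper's one-line proof, which makes the theorem a purely formal consequence of the module isomorphism and independent of any rigidity input. What your version buys instead is the explicit reformulation of both rigidity notions as a condition on the $\mathfrak v$-summand alone; that reformulation is true and is in fact how the paper later exploits the theorem (cf.\ Corollary~\ref{cor:dim}), but as a proof of the equivalence itself it trades a trivial argument for a dependence on a nontrivial theorem.
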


This equivalence is described by means of Lie algebras, and it is used along the paper, because some things are
easier to understand in the complex hyperbolic setting instead of the projective one.

\medskip

The article is organized as follows. In Section~\ref{sec:Thurstonsslice} we recall Thurston's construction
of deformations of hyperbolic structures and the generalized Dehn filling coefficients.
In Section~\ref{sec:infinitesimaldefs} we introduce the main tools in order to study infinitesimal deformations. 
The next two sections are devoted to cohomology computations, namely in Section~\ref{sec:invariantsubspaces}
we compute invariant subspaces of the Lie algebras and in Section~\ref{sec:cohmologytorus} we analyze the image in cohomology
of the restriction to the torus boundary. The proof of Theorems~\ref{thm:fillingstrong} and \ref{thm:fromonetomany}
 is given in Section~\ref{sec:function},
by means of an analytic function on the deformation space: when this function does not vanish, then the corresponding 
Dehn filling is infinitesimally rigid. To prove Propositions~\ref{prop:fig8explicit} and \ref{prop:twist}, we require 
the notion of rigid slope, treated in Section~\ref{sec:flexingslopes}, as well as explicit computations on the figure eight 
knot and the Whitehead link exteriors, made in Section~\ref{sec:examples}.

\paragraph{Acknowledgements} We are indebted to Suhyoung Choi for   useful conversations, as well to the anonymous referee(s)
for suggesting many improvements.

\section{Dehn fillings and Thurston's slice}

\label{sec:Thurstonsslice}

In this section we recall the deformation space introduced by Thurston in his proof of hyperbolic Dehn filling theorem
\cite{ThurstonNotes}.

Along the paper, $M$ denotes a compact manifold with  boundary a  union  of $k>0$ tori and hyperbolic interior: 
$$
\partial M=\partial_1 M\sqcup\cdots\sqcup \partial_k M,
$$
where each $\partial_i M\cong T^2$.

 The deformation space of hyperbolic structures of $M$ around the complete structure is described by the 
\emph{Thurston's slice} \cite{ThurstonNotes,NZ}. 
Given $\lambda_i,\mu_i\in\pi_1(\partial M)$ a pair of simple closed curves that generate the fundamental group on each component $\partial M_i$,
Thurston introduced a parameter 
$$
u=(u_1,\ldots,u_k) \in U\subset \mathbf C^k,
$$ 
defined on $U$ a neighborhood of $0$. The neighborhood $U$  parametrizes the deformations of the complete holonomy of the interior of $M$.
Two structures parametrized by $u$ and $u'\in U$ are equivalent 
(the developing maps differ by composing with 
an isometry of $\mathbf H^3$) if and only if 
\begin{equation}
\label{eqn:samecharacter}
(u_1,\ldots,u_k)= (\pm u_1',\ldots, \pm u_k') .
\end{equation}
This is a consequence of the fact that (\ref{eqn:samecharacter}) is a criterion for having the same character, 
and the fact that deformations are parametrized by conjugacy classes of holonomy \cite{CEG}.

\begin{thm}[Thurston's slice]
\label{thm:thurston}
There exists an open neighborhood $0\in U\subset \mathbf C^k$, an analytic family of representations $\{\rho_u\}_{u\in U}$,
of $\pi_1(M)$ in $PSL_2(\mathbf C)$ and analytic functions $v_i=v_i(u)$, $i=1,\ldots,k$  so that:
\begin{enumerate}[(i)]
\item The parameters $u_i$ and $v_i$ are the complex length of $\rho_u(\mu_i) $ and $\rho_u(\lambda_i)$ respectively.
\item The function $\tau_i(u)=v_i(u)/u_i$ is analytic.
Moreover $v_i=\tau_i(0) u_i + (\vert u\vert ^3)$, where $\tau_i(0)\in\mathbf C$ is the cusp shape and has nonzero imaginary part. 
\item The structure with holonomy $\rho_u$ is complete on the $i$th cusp if and only if  $u_i=0$. 
\item  When $u_i\neq 0$, the equation 
\begin{equation}
\label{eqn:Dehnfilling}
p_i\, u_i+q_i\, v_i=2\pi \, \mathbf  i
\end{equation}
has a unique solution $(p_i,q_i)\in\mathbf R^2$. 
The representation $\rho_u$ is the holonomy of a incomplete hyperbolic structure with \emph{generalized Dehn filling coefficients} $(p_i,q_i)$ on the $i$th cusp.
\end{enumerate}
\end{thm}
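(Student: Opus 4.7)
The plan is to extract the slice structure from the deformation theory of the $\PSL(\mathbf{C})$-character variety $X=X(\pi_1(M))$ near the discrete faithful character $\chi_0$ of the complete structure, combined with the geometry of commuting isometries of $\mathbf{H}^3$. The essential cohomological input is that $H^1(M;\Sl(\mathbf{C})_{\Ad\rho_0})$ has complex dimension $k$ and that the restriction map to $H^1(\partial M;\Sl(\mathbf{C})_{\Ad\rho_0})$ is injective with Lagrangian image for the cup-product Poincar\'e pairing; together with $H^0(M;\Sl(\mathbf{C})_{\Ad\rho_0})=0$, this gives $X$ smooth of complex dimension $k$ at $\chi_0$.

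First I would define $u_i$ as a local analytic branch of the complex length of $\rho(\mu_i)$, so that $\rho_u(\mu_i)$ has eigenvalues $e^{\pm u_i/2}$; this branch is well defined on a $2^k$-fold cover of a neighborhood of $\chi_0$ in $X$, with $u_i(\chi_0)=0$. The differentials $du_1,\dots,du_k$ are linearly independent at $\chi_0$: on each peripheral character variety $X(\pi_1(\partial_iM))$ the Lagrangian direction at the parabolic character is generated by $du_i$ (since $dv_i=\tau_i(0)du_i$ on that $1$-dimensional direction, a point treated in the next paragraph), so a cohomology class on which every $du_i$ vanishes restricts trivially to $H^1(\partial_iM;\Sl(\mathbf{C})_{\Ad\rho_0})$ and is therefore zero by injectivity. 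Consequently $u=(u_1,\ldots,u_k)$ is a local biholomorphism to a neighborhood $U\subset\mathbf{C}^k$ of $0$, and an analytic slice for the conjugation action on the representation variety produces the family $\rho_u$ of (i).

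For (ii)--(iii), the commuting pair $\rho_u(\mu_i),\rho_u(\lambda_i)$ lies in a common one-parameter subgroup of $\PSL(\mathbf{C})$, so choosing the branch of $v_i$ compatibly with $u_i$ gives an analytic function in $u$. Both $u_i$ and $v_i$ vanish on the same analytic hypersurface, namely the locus where the $i$-th peripheral image is parabolic, so $\tau_i=v_i/u_i$ extends analytically to all of $U$; the value $\tau_i(0)$ is the ratio of longitude to meridian translation in the parabolic subalgebra at $\chi_0$, i.e.\ the modulus of the nondegenerate Euclidean cusp torus, which has nonzero imaginary part. Independence of cusps at first order gives $\partial v_i/\partial u_j|_0=\delta_{ij}\tau_i(0)$, and the independent sign ambiguity in each coordinate from (\ref{eqn:samecharacter}) forces $v_i$ to be odd in $u_i$ and even in $u_j$ for $j\neq i$; no quadratic monomials survive, and one obtains $v_i=\tau_i(0)u_i+O(|u|^3)$. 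Statement (iii) is then immediate: the $i$-th cusp is complete iff its peripheral image is parabolic iff $u_i=0$.

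Finally, for (iv), when $u_i\neq 0$ the complex numbers $u_i$ and $v_i$ are $\mathbf{R}$-linearly independent, since $v_i/u_i=\tau_i(u)$ has nonzero imaginary part near $0$; hence $p_iu_i+q_iv_i=2\pi\mathbf{i}$ admits a unique real solution $(p_i,q_i)$, which by Thurston's analysis of incomplete cusped structures is the pair of generalized Dehn filling coefficients. The main obstacle is the cohomological step of the first paragraph, namely the $k$-dimensional smoothness of $X$ at $\chi_0$ together with the Poincar\'e--Lefschetz Lagrangian splitting, which together underpin the fact that $(u_1,\ldots,u_k)$ forms a coordinate system on $X$; once this is in hand, the rest reduces to explicit local analysis of commuting elements in $\PSL(\mathbf{C})$ and the discrete sign symmetries of the character map.
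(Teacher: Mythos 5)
You are proving a statement the paper itself does not prove: after Theorem~\ref{thm:thurston} the authors simply cite Appendix~B of \cite{BoileauPorti}, so the comparison is with the standard argument (Thurston, Neumann--Zagier, Boileau--Porti), whose overall strategy you do follow. The genuine gap is at the one step that carries all the difficulty: your justification that $u=(u_1,\dots,u_k)$ is an analytic coordinate system. The complex length $u_i$ is \emph{not} an analytic function on the character variety $X$ near $\chi_0$: since $\operatorname{tr}\rho(\mu_i)=\pm 2\cosh(u_i/2)$, the trace of $\mu_i$ is critical at the parabolic character and $u_i$ behaves like a square root of it, which is exactly why you must pass to a $2^k$-fold branched cover. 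But then your argument ``the differentials $du_1,\dots,du_k$ are linearly independent at $\chi_0$ because a class killed by all $du_i$ restricts trivially to $H^1(\partial_iM;\mathfrak{sl}_2(\mathbf C))$'' is not available: at the branch point the covering map has degenerate derivative, its tangent space is not identified with $T_{\chi_0}X\subset H^1(M;\mathfrak{sl}_2(\mathbf C))$, and $du_i$ simply cannot be evaluated against cohomology classes there (on $X$ itself the honest differential, that of $\operatorname{tr}\rho(\mu_i)$, vanishes at $\chi_0$). Worse, you support this step by invoking $dv_i=\tau_i(0)\,du_i$ ``treated in the next paragraph'', while that paragraph's Neumann--Zagier parity expansion $v_i=\tau_i(0)u_i+O(|u|^3)$ presupposes that $v_i$ is already an analytic function of the parameter $u$ and that the sign involutions act in $u$-coordinates --- i.e.\ it presupposes the very coordinate property you are trying to establish. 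Smoothness of the relevant branched cover at the branch point, equivalently the existence of the analytic family $\rho_u$ with $u$ as parameter, is the heart of the theorem and needs an independent argument (Thurston's gluing-variety computation, the Neumann--Zagier even/odd plus finiteness-of-fiber argument using local rigidity of the complete structure, or the explicit normal-form construction of \cite[App.~B]{BoileauPorti}).

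A secondary, smaller gap: you assert that $\dim H^1(M;\mathfrak{sl}_2(\mathbf C)_{\Ad\rho_0})=k$, injectivity into the boundary and $H^0=0$ ``give $X$ smooth of complex dimension $k$ at $\chi_0$''. The cohomology computation only bounds the Zariski tangent space above by $k$; smoothness also requires the lower bound $\dim_{\chi_0}X\geq k$ (Thurston's half-lives-half-dies estimate on the representation variety) or a vanishing-of-obstructions argument, neither of which you invoke. The remaining points --- the parity argument giving $v_i=\tau_i(0)u_i+O(|u|^3)$, the nonvanishing imaginary part of the cusp shape, completeness of the $i$-th cusp iff $u_i=0$, and the unique real solution of $p_iu_i+q_iv_i=2\pi\mathbf i$ --- are fine once the coordinate statement is in place, but as written they rest on the circular step above.
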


See \cite[App.~B]{BoileauPorti} for a proof, for instance. 

In his proof of hyperbolic Dehn filling, Thurston shows that there is a diffeomorphism between $U$ and a neighborhood of $\infty$ in 
$(\mathbf R^2\cup\{\infty\})^k$ that
maps componentwise $0$ to $\infty$ and $u_i\neq 0$ to $(p_i,q_i)\in\mathbf R^2$ satisfying $p_i\, u_i+q_i\, v_i=2\pi \, \mathbf  i$.

\begin{definition}
\label{def:hyp_dehn_filling_space}
The connected neighborhood of $\infty$ in 
$(\mathbf R^2\cup\{\infty\})^k$ that is the  image of $U$ is called the \emph{hyperbolic Dehn filling space}.
\end{definition}

The geometric interpretation of   generalized Dehn filling coefficients is the following one:
the representation $\rho_0$ is the holonomy of the complete hyperbolic structure of  $\mathit{Int}(M)$. 
If $u\neq0$ then
the representation $\rho_u$ is the holonomy of a non complete hyperbolic structure $M_u$ on $\mathit{Int}(M)$ and the metric completion of $M_u$ is described by the Dehn filling parameters 
\begin{enumerate}[(i)]
\item When $p_i,q_i\in\mathbf Z$ are coprime, then the metric completion of $M_u$ is precisely the Dehn filling with slope $p_i\mu_i+q_i\lambda_i$.
\item  When $p_i/q_i=p'_i/q'_i\in\mathbf Q\cup\infty$ with $p'_i,q'_i\in\mathbf Z$ coprime, then the completion of $M_u$ is a cone manifold,
obtained by   Dehn filling with slope $p'_i\mu_i+q'_i\lambda_i$ where the core of the torus is a singular geodesic with cone angle $2\pi p'_i/p_i$.
\item When $p_i/q_i\in \mathbf R\setminus\mathbf Q$, then the metric completion is the one point compactification.
\end{enumerate}

A particular case that we will use later is when $u_i=\alpha_i\, \mathbf i$ for some $\alpha_i\in \mathbf R$, $\alpha_i>0$. Then $p_i=2\pi/\alpha_i$ and $q_i=0$,
and
$\rho_{(\mathbf i\alpha_1,\ldots ,\mathbf i\alpha_k)}$ is the holonomy of a hyperbolic cone manifold with cone angles 
$(\alpha_1,\ldots ,\alpha_k)$.

The real analytic structure will be crucial in our arguments. When viewed in $PSL_2(\mathbf C)$, $\rho_u$ is complex analytic, but we will work with the real analytic structure, which is the same as for
$PSO(3,1)$. In particular the following lemma will be useful.

\begin{lemma}\label{lem:analytic}
For each $i=1,\ldots,k$, if $\tau_i(u)=v_i(u)/u_i$, then the map
$$
\begin{array}{rcl}
U\subset\mathbf C^k & \to & \mathbf R^2 \\
u & \mapsto & \frac 1{\vert p_i+q_i\tau_i\vert ^2}(p_i,q_i)
\end{array}
$$ 
is real analytic.
\end{lemma}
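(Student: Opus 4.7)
The plan is to solve the defining equation (\ref{eqn:Dehnfilling}) for $(p_i, q_i)$ as explicit functions of $u$ and $\tau_i(u)$, and then observe that the normalization $|p_i + q_i\tau_i|^{-2}$ cancels exactly the singularity at $\{u_i=0\}$ that appears in these formulas, precisely because the same equation forces $|p_i + q_i \tau_i|\cdot|u_i| = 2\pi$.

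The first step is to rewrite (\ref{eqn:Dehnfilling}) as $u_i(p_i + q_i \tau_i(u)) = 2\pi\mathbf{i}$. Taking modulus squared gives
\[
\frac{1}{|p_i + q_i\tau_i|^2} = \frac{|u_i|^2}{4\pi^2},
\]
which is manifestly real analytic on all of $U$ and vanishes along $\{u_i=0\}$. This identity is the heart of the matter.

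Next, I would separate real and imaginary parts of $u_i(p_i + q_i\tau_i) = 2\pi\mathbf{i}$: writing $u_i = a_i + \mathbf{i} b_i$ and $\tau_i(u) = \alpha_i(u) + \mathbf{i}\beta_i(u)$ with $\alpha_i,\beta_i$ real analytic, the equation becomes a $2\times 2$ linear system for $(p_i,q_i)$ whose determinant is $(a_i^2 + b_i^2)\beta_i = |u_i|^2 \beta_i$. By Theorem~\ref{thm:thurston}(ii), $\beta_i(0) = \operatorname{Im}\tau_i(0)\neq 0$, so after possibly shrinking $U$ the function $\beta_i$ is nowhere zero on $U$, and Cramer's rule expresses both $p_i$ and $q_i$ as real analytic functions on $U\setminus\{u_i=0\}$ with common denominator $|u_i|^2\beta_i$.

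Multiplying these formulas by the prefactor $|u_i|^2/(4\pi^2)$ cancels the $|u_i|^2$ in the denominator, leaving expressions whose only denominator is $\beta_i$, hence real analytic on all of $U$. The whole argument is bookkeeping; the one potential obstacle, namely the blow-up of $(p_i,q_i)$ as $u_i\to 0$, is already neutralized by the prefactor $|p_i + q_i\tau_i|^{-2}$ built into the statement.
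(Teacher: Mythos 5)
Your proposal is correct and is essentially the paper's own argument: the paper simply records the solved formulas $p_i=-2\pi\operatorname{Re}(u_i\tau_i)/(\vert u_i\vert^2\operatorname{Im}\tau_i)$, $q_i=2\pi\operatorname{Re}(u_i)/(\vert u_i\vert^2\operatorname{Im}\tau_i)$ and $p_i+q_i\tau_i=2\pi\mathbf i/u_i$, which are exactly what your Cramer's-rule computation and the modulus identity $\vert p_i+q_i\tau_i\vert^{-2}=\vert u_i\vert^2/(4\pi^2)$ yield. The cancellation of $\vert u_i\vert^2$ and the appeal to $\operatorname{Im}\tau_i(0)\neq 0$ (after shrinking $U$) are the same "straightforward consequence" the paper invokes, so nothing is missing.
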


\begin{proof}
Using Equation~(\ref{eqn:Dehnfilling}), we obtain:
$$
p_i=-2\pi\frac{Re(u_i\tau_i)}{\vert u_i\vert^2 Im(\tau_i)}, 
\qquad
q_i=2\pi\frac{Re(u_i)}{\vert u_i\vert^2 Im(\tau_i)},
\qquad
p_i+q_i \tau_i=\frac{2\pi\mathbf i}{u_i}.
$$
The lemma is a straightforward consequence from these equalities and the fact that the 
imaginary part of $\tau_i(0)$ does not vanish.
\end{proof}

\section{Infinitesimal deformations}
\label{sec:infinitesimaldefs}

The aim of this section is to provide some technical background for the sequel and to set up the notations.
In Subsection~\ref{subsec:cohom} we shall recall the setup of twisted homology theory,
Subsection~\ref{subsection:so(3,1)} provides some known results about the cohomology group 
$H^1(M; \mathfrak{so}(3,1)_{\Ad\rho})$ 
(Subsection~\ref{subsection:so(3,1)}).

\subsection{The cohomology, Kronecker pairings and the Poincar\'e-Lefschetz duality}
\label{subsec:cohom}

\subsubsection{The homology and cohomology with twisted coefficients}
\label{subsection:twistedcohomology}
Let $X$ be a finite CW-complex, let $V$ be a finite dimensional real vector space and let $\rho\co\pi_1(X)\to GL(V)$ be a representation.
In the sequel of this subsection we shall denote $\widetilde X$ the universal covering of $X$ and $\pi := \pi_1(X)$ for short its fundamental group.
The vector space $V$ and its dual $V^*$ turn into a left modules over the group ring 
$\mathbf Z \pi$: for all $\gamma\in \pi$, $v\in V$ and $f\in V^*$ we have 
\[ \gamma\, v := \rho(\gamma) v \text{ and } \gamma\, f( v) = f(\rho(\gamma)^{-1} v)\,.\]

The homology and cohomology of $X$ with coefficients in $V$ are defined in the usual way:
\begin{align*}
C_*(X;V) &:= C_*(\widetilde X)\otimes_{\mathbf Z \pi} V\\
C^*(X;V^*) &:= \hom_{\mathbf Z\pi} (C_*(\widetilde X); V^*)\,.
\end{align*}
Here we follow the standard notations and conventions (see \cite[3.H]{Hatcher}).
The boundary and coboundary operators are given by
\begin{align*}
\partial_p &= \partial \otimes \Id \co C_p(X;V) \to C_{p-1}(X;V)\: ;\\
\delta^p &\co C^{p-1}(X;V) \to C^{p}(X;V), \quad \delta^p F (c_p) = F(\partial c_p)
\end{align*}
where $\partial$ denotes the boundary operator of $C_*(\widetilde X)$.
Note that $C_*(X;V)$ and $C^*(X;V^{*})$ are finite dimensional vector spaces due to the finiteness of $X$.

\subsubsection{The group cohomology and infinitesimal deformations}
\label{subsection:groupcohomology}

 Let $\pi=\pi_1(X)$, $V$ and $\rho:\pi\to GL(V)$ be as in the previous paragraph. The group cohomology of
$\pi$ with coefficients in $V$ is denoted by
$$
H^*(\pi; V).
$$
See for instance \cite{Bro82} for definitions and proofs of this paragraph.
We are mainly interested in the case where $X$ is a hyperbolic manifold, hence aspherical. Thus we have a natural isomorphism:
$$
H^*(\pi; V)\cong H^*(X; V).
$$
(see \cite{Whi78} for details).
For the interpretation of $H^1(\pi; V)$ in terms of infinitesimal deformations, we need to recall
 that the space of 1-cocycles or crossed morphisms is
$$
Z^1(\pi;V)=\{ d:\pi\to V\mid d(\gamma_1\gamma_2)=d(\gamma_1)+\rho(\gamma_1) d(\gamma_2),\ \forall\gamma_1\gamma_2\in \pi\}.
$$
The space of coboundaries is
$$
B^1(\pi;V)=\{d\in Z^1(\pi;V)\mid \exists v\in V \textrm{ s.t. } d(\gamma)=(\rho(\gamma)-1) v ,\ \forall\gamma_1\gamma_2\in \pi\}.
$$
Then we have a natural isomorphism
$$
H^1(\pi;V)\cong Z^1(\pi;V)/B^1(\pi;V).
$$
Weil's construction \cite{Weil} gives the cohomological interpretation of infinitesimal deformations. Namely, given $G$ a Lie group
with Lie algebra $\mathfrak{g}$ and a representation $\psi\co\pi\to G$, the space of infinitesimal deformations is
$$
H^1(\pi; \mathfrak{g}_{\Ad\psi}),
$$
where  $\Ad\psi$ denotes the composition of $\psi$ with the adjoint representation, hence it is a representation of $\pi$ into    $GL(\mathfrak{g})$.
The construction of Weil is as follows. A  deformation of $\psi\co\pi\to G$ may be written as $\psi_t\co\pi\to G$, where $\psi_0=\psi$ and $t\in (-\varepsilon,\varepsilon)$.
Assuming differentiability at $t=0$, then define $d\co\pi\to \mathfrak{g}$ by
$$
d(\gamma):=\left.\frac{d\phantom{t}}{dt} \right|_{t=0}\psi_t(\gamma) \, \psi_0(\gamma)^{-1}\qquad \forall\gamma\in \pi.
$$
It is easy to check that $d\in Z^1(\pi;\mathfrak{g}_{\Ad\psi})$. In fact Weil proves that $Z^1(\pi;\mathfrak{g}_{\Ad\psi})$ is precisely the space of infinitesimal (or first order) deformations
of $\psi$, 
and that $B^1(\pi;\mathfrak{g}_{\Ad\psi})$  is the space of infinitesimal deformations by conjugation \cite{Weil}.

\begin{thm}[Weil \cite{Weil}]
Infinitesimal rigidity implies actual rigidity: If $H^1(\pi; \mathfrak{g}_{\Ad\psi})=0$ then  $ \psi$ can not be deformed up to conjugation
\end{thm}

\subsubsection{The Kronecker pairing}
\label{subsection:Kronecker}

 Let $X$, $\pi=\pi_1(X)$, $V$ and $\rho\co\pi\to GL(V)$ be as in the first paragraph.
There is a natural isomorphism
\[ \tau\co C_*(X ; V)^* \to C^*(X ; V^*) = \hom_{\mathbf Z\pi} ( C_*(\widetilde X) ; V^* )\,. \]
(see \cite[2.6]{Weibel}). For  $g\in C_p(X ; V)^*$,
$G\in C^p(X ; V^*)$, $c_p\in C_p(\widetilde X)$ and $v\in V$ the isomorphism $\tau$ and its inverse are given 
\[ \tau g (c_p) (v) = g(c_p\otimes v) \text{ and } \tau^{-1}G ( c\otimes v) = G(c) (v)\,.\] 
This gives rise to the \emph{Kronecker pairing}
\[  \langle \, .\, , \, .\,\rangle\co C^p(X ; V^*)\otimes C_p(X ; V) \to \mathbf R\]
given by $\langle G, c_p\otimes v \rangle = G(c)(v)$.
The Kronecker pairing  behaves well with respect of the boundary operators:
\[  \langle G, \partial_p (c_p\otimes v) \rangle = 
\langle   \delta^p G, c_p\otimes v \rangle\,. \]
This shows that we obtain a well defined pairing 
\[ \langle \, .\, , \, .\,\rangle\co H^p(X ; V^*)\otimes H_p(X ; V) \to \mathbf R\,.\]
In order to see that the Kronecker pairing is non degenerated we observe that the boundary operators $\partial_p$ and $\delta^p$ are dual to each other and hence 
$\Ker \partial_p = (\image \delta^p)^\bot$ and 
$\Ker \delta_p = (\image \partial^p)^\bot$ (see \cite[2.26]{Greub}).
Therefor if $F$ is a $p$--cocycle and if $\langle F, z_p \rangle =0$ for every
$z_p\in \Ker \partial_p$ then $F\in (\Ker\partial_p)^\bot = \image\delta^p$ and hence $F$ is a coboundary.

Now let $b\co V\times V\to\mathbf R$ be a non-degenerate  bilinear form.
Moreover we shall assume that $b$ is $\pi$--invariant i.e.\ for all $\gamma\in\pi$ and for all $v,w\in V$ we have
\[ b(v,w) = b(\gamma\,v , \gamma\,w)\,.\]
The form $b$ induces an isomorphism $\phi_b$ between the $\pi$--modules $V$ and $V^*$:
for $v,w\in V$ we have $\phi_b(v) (w) = b(v,w)$. The map $\phi_b$ is injective and hence an isomorphism since $b$ is non-degenerate. Observe that $\phi_b$ is $\pi$--invariant:
\[ \phi_b(\gamma\, v) (w) = b(\gamma\, v, w)= b( v,\gamma^{-1} w) = \phi_b(v)(\gamma^{-1} w)=\gamma\phi_b(v) (w)\,.\]
Now $b$ gives rise to a Kronecker pairing
\[\langle \, .\, , \, .\,\rangle\co C^p(X ; V)\otimes C_p(X ; V) \to \mathbf R\]
given by $\langle F, c_p\otimes v \rangle = \phi_b(F(c_p))(v) = b(F(c_p),v)$ for $F\in C^p(X;V)$ and 
$c_p\otimes v\in C_p(X;V)$. Hence we obtain a non-degenerate form
\[ \langle \, .\, , \, .\,\rangle\co H^p(X ; V)\otimes H_p(X ; V) \to \mathbf R\,.\]

\subsubsection{The Poincar{\'e}-Lefschetz duality}
Let $M$ be a compact, oriented, $n$--dimensional manifold with boundary $\partial M$ and let
$\rho\co\pi_{1}(M)\to GL(V)$ be a representation.
The intersection number between simplices of two dual triangulations on the universal covering 
$\widetilde M$ induces the perfect intersection pairing
\[ H_{p}(M;V^{*})\otimes H_{n-p} (M,\partial M ; V)\to \mathbf R \]
(see \cite[0.3]{Porti}, \cite[Sec.~4]{JohnsonMillson},\cite[Lemma 2]{Milnor62} and for a general approach \cite{Steenrod}).

Hence we obtain an isomorphism between $H_{p}(M;V^{*})^{*}$ and $H_{n-p} (M,\partial M ; V)$. Composing this isomorphism with the isomorphism obtained from the Kronecker pairing we obtain the duality isomorphism
\[ \mathit{PD}\co H_{n-p} (M,\partial M ; V)\to H^{p}(M;V)\,.\]
An isomorphism $\mathit{PD}\co H_{p} (M ; V^{*})\to H^{n-p} (M,\partial M ; V^{*})$ is obtained analogously. The usual formula for the cup-product (to be denoted $\cup$) of simplicial  cochains gives  that the cup-product induces a perfect pairing
\[ .\,\cup \,.\co H^{n-p} (M,\partial M ; V)\otimes H^{p}(M;V^{*})\to \mathbf R\,. \]
Moreover the existence of a non-degenerate bilinear map $b\co V\times V\to\mathbf R$ gives a pairing
\[ .\,\overset{b}{\cup} \,.\co H^{n-p} (M,\partial M ; V)\otimes H^{p}(M;V) \to \mathbf R\,. \]

\subsubsection{Killing forms}
The \emph{Killing form} on any Lie algebra $\mathfrak{g}$ is defined as:
$$
B(X,Y)= \operatorname{trace}( ad_X \circ ad_Y ) 
\qquad\forall X,Y \in\mathfrak{g},
$$
where   $ad_X \in \mathrm{End}(\mathfrak{g})$ denotes  the endomorphism given by
$ad_X(Y)= [X,Y]$. If $\mathfrak{g} = \mathfrak{sl}(4)$, then 
$B(X,Y)= 8 \tr(X\cdot Y)$. Note that $B$ is symmetric.

The matrix of the Lorentzian inner product is denoted by
$$J=\begin{pmatrix}
       1 &   &   &   \\
          & 1 &   &   \\
	   &   & 1 &   \\
          &   &   & -1
      \end{pmatrix}.
$$
So that
$$
O(3,1)=\{ A\in GL(4)\mid A^{t}JA=J\},
$$
and the connected component of the identity of its projectivization $PSO(3,1)$ is the group of orientation preserving isometries of $\mathbf H^3$.
Its Lie algebra is
$$
   \mathfrak{so}(3,1)=\{ a\in \mathfrak{sl}(4) \mid a^{t} J= -J a\}.
$$

Following Johnson and Millson \cite{JohnsonMillson}, along the paper we shall use the decomposition of $\mathfrak{sl}(4)$ as direct sum  of $PSO(3,1)$--modules via the adjoint action:
\begin{equation}
\label{eqn:sl}
\mathfrak{sl}(4)= \mathfrak{so}(3,1)\oplus \mathfrak{v},
\end{equation}
where 
$$
\mathfrak{v}=\{ a\in \mathfrak{sl}(4) \mid a^{t} J= J a\}.
$$
Notice that $\mathfrak v$ is not a Lie algebra, but just a $PSO(3,1)$--module, and that  
$$
\dim(\mathfrak{v})=\dim(\mathfrak{gl}(4))- \dim(\mathfrak{so}(3,1))=15-6=9.
$$

Both the form $B$ on $\mathfrak{sl}(4)$ and its restriction to $\mathfrak{so}(3,1)$
are non-degenerate.
Moreover $\mathfrak{v}$ is the orthogonal complement to $\mathfrak{so}(3,1)$: 
$$
\mathfrak{sl}(4)=\mathfrak{so}(3,1)\perp\mathfrak{v}.
$$
Therefore $B$ 
restricted to $\mathfrak{v}$ is non-degenerate, too.

Hence given a representation $\rho\co\pi_1(X)\to PSO(3,1)$ we obtain a \emph{canonical splitting} in homology:
$$
H^*(X; \mathfrak{sl}(4)_{\Ad\rho})= H^*(X; \mathfrak{so}(3,1)_{\Ad\rho})\oplus H^*(X; \mathfrak{v}_{\Ad\rho}).
$$

In the sequel we shall be mostly interested in the following situation:
let $M$ be a finite volume hyperbolic 3-manifold with $k$ cusps and let
$\rho\co\pi_{1}(M)\to SO(3,1)\subset SL(4)$ be a representation. 
Then the Lie algebra $\mathfrak{sl}(4)$ 
turns into a $\pi_{1}(M)$--module via $\Ad\circ\rho\co \pi_{1}(M)\to\Aut{\mathfrak{sl}(4)}$.
Note that the Killing form is $\pi_{1}(M)$ invariant hence the action of $\pi_{1}(M)$ respects
$\mathfrak{v}$ i.e.\ the 9-dimensional vector space $\mathfrak{v}$ turns into a $\pi_{1}(M)$--module and the restriction of the Killing form on $\mathfrak{v}$ induces a non degenerate $\pi_{1}(M)$ invariant bilinear form
\[ B\co \mathfrak{v}\times\mathfrak{v}\to \mathbf R\,.\]

A \emph{cup product} on cohomology is defined by using $B$:
\begin{equation}
\label{eqn:cupproduct}
H^p(M;\mathfrak{v})\otimes H^q(M,\partial M;\mathfrak{v})\xrightarrow{\cup} H^{p+q}(M,\partial M;\mathfrak{v}\otimes \mathfrak{v})\xrightarrow{B_*} H^{p+q}(M,\partial M;\mathbf R)
\end{equation}
where the first arrow is the usual cup product, and
$B_*$ denotes the map induced by 
$B\co \mathfrak{v}\otimes \mathfrak{v}\to \mathbf R$.
In the sequel this cup product will be simply denoted by $\cup$.

This cup product induces Poincar\'e-Lefschetz duality 
\[H^p(M;\mathfrak{v})\otimes H^{n-p}(M,\partial M;\mathfrak{v})\xrightarrow{\cup} H^{n}(M,\partial M;\mathbf R)\cong\mathbf R\]
since $B$ is non degenerated.
As $B$ is symmetric, this cup product is symmetric or antisymmetric depending on whether
the product of dimensions $p\,q$ is even or odd, as the usual cup product.

\subsection{The semi-continuity}

Let $V$ be a finite dimensional real vector space and let $\Gamma$ be a finitely generated group.
The set $R(M,GL(V))$ of all representations of $\Gamma$ into $GL(V)$ has the structure of a real affine algebraic set, $R(\Gamma,GL(V))\subset \mathbf R^{N}$ (see \cite{LubotzkyMagid}). Given a representation $\rho\co\Gamma\to GL(V)$ the vector space $V$ turns into a 
$\Gamma$--module via $\rho$ which will be denoted by $V_{\rho}$.  

\begin{lemma}
 \label{lemma:uppersc} Let $V$ be a finite dimensional real vector space.
Then the function $h^{i}\co R(\Gamma,GL(V)) \to \mathbf Z$ given by 
\[h^{i}(\rho) = \dim H^i(\Gamma; V_{\rho}) \]
is upper semi-continuous for $i=0,1$.

More precisely, for each $n\in\mathbf Z$ the set
$\{\rho\in R(\Gamma,GL(V)) \mid h^{i}(\rho) \geq n\}$ is a closed algebraic subset.
\end{lemma}

\begin{proof}
This follows from a general theorem \cite[Theorem 12.8]{Hartshorne}.
For the convenience of the reader we give an elementary argument.

We view $H^1(\Gamma;V_{\rho})$ as the group cohomology of 
$\Gamma$, namely it is isomorphic to the quotient 
$Z^1(\Gamma;V_{\rho})/B^1(\Gamma;V_{\rho})$. The space of cocycles 
$Z^1(\Gamma;V_{\rho})$ is the kernel of a linear map with coefficients that are polynomial functions in the ambient coordinates, hence 
$\dim( Z^1(\Gamma;V_{\rho}) )$ is constant, except on a (possibly empty) closed algebraic subset where it is larger.
  On the other hand, the space of coboundaries $B^1(\Gamma;V_{\rho})$
is the image of a linear map whose coefficients are polynomial functions in the ambient coordinates, 
hence 
$
  \dim (B^1(\Gamma;V_{\rho})) 
$  is constant, except  on a closed algebraic subset where it is smaller.

Analogously, $H^0(\Gamma;V_{\rho})$ is the kernel of a linear map with coefficients that are polynomial functions in the ambient coordinates. Hence 
$\dim( H^0(\Gamma;V_{\rho}) )$ is constant, except on a (possibly empty) closed algebraic subset where it is larger.
\end{proof}


\subsection{Infinitesimal deformations in real hyperbolic spaces}
\label{subsection:so(3,1)}

Infinitesimal deformations in 
$\operatorname{Isom}^+(\mathbf H)=PSO(3,1)$ are well understood, and described by
$H^1(M; \mathfrak{so}(3,1)_{\Ad\rho})$. We summarize in this subsection the main results.

Let $M$ be a finite volume hyperbolic 3-manifold with $k$ cusps. We choose one essential simple closed curve $\mu_i\subset \partial_i M$ for each boundary component.

\begin{prop} Let $M$ be a finite volume hyperbolic 3-manifold with $k$ cusps and 
$\mu=\mu_1\cup\cdots \cup \mu_k \subset\partial M$ be given as above. Moreover 
let $U$ and $\{\rho_{u}\}_{u\in U}$ be as in Theorem~\ref{thm:thurston}.

Then there exists a possibly smaller open neighborhood $\tilde U$ of $0\in \tilde U\subset U$ such that
for all $u\in \tilde U$:
\begin{enumerate}[(i)]
\item The inclusion $\partial M\subset M$ induces a monomorphism
$$
0\to H^1(M; \mathfrak{so}(3,1)_{\Ad\rho_u})\to H^1(\partial M; \mathfrak{so}(3,1)_{\Ad\rho_u}).
$$ 
\item The inclusion of the union $\mu=\mu_1\cup\cdots \cup \mu_k  \subset M$ induces a monomorphism
$$
0\to H^1(M; \mathfrak{so}(3,1)_{\Ad\rho_u})\to H^1(\mu; \mathfrak{so}(3,1)_{\Ad\rho_u}).
$$ 
\item $\dim 
H^1(M; \mathfrak{so}(3,1)_{\Ad\rho_u})=2 k$.
\item $\dim 
H^1(M,\mu ; \mathfrak{so}(3,1)_{\Ad\rho_u})=2 k$.
\end{enumerate}
The $\tilde U$ depends on $\mu$ for (ii) and (iv), but not for (i) and (iii).
\end{prop}

\begin{proof}
First note that for $u=0$ the representation $\rho_{0}$ is the holonomy of the complete hyperbolic structure.
This implies that $H^{0}(M;\mathfrak{so}(3,1)_{\Ad\rho_0})=0$, 
$H^{0}(\mu;\mathfrak{so}(3,1)_{\Ad\rho_0})\cong \mathbf R^{2k}$,
$H^{0}(\partial M;\mathfrak{so}(3,1)_{\Ad\rho_0})\cong \mathbf R^{2k}$,
$$\dim(H^{1}(M;\mathfrak{so}(3,1)_{\Ad\rho_0}))= 2k \text{, and }
\dim(H^{1}(M,\mu;\mathfrak{so}(3,1)_{\Ad\rho_0}) )= 2k$$
(see \cite[8.8]{KapovichBook}, \cite{Garland} or   also \cite{Bromberg04} and \cite{MenalP} for a proof).

Now by the semi-continuity, we can choose $\tilde U$ such that for all $u\in \tilde U$
$H^{0}(M;\mathfrak{so}(3,1)_{\Ad\rho_u})=0$, $H^{0}(\mu;\mathfrak{so}(3,1)_{\Ad\rho_u})\cong \mathbf R^{2k}$, $H^{0}(\partial M;\mathfrak{so}(3,1)_{\Ad\rho_u})\cong \mathbf R^{2k}$
$$\dim(H^{1}(M;\mathfrak{so}(3,1)_{\Ad\rho_u}))\leq 2k \text{ and } \dim(H^{1}(M,\mu;\mathfrak{so}(3,1)_{\Ad\rho_u}) )\leq 2k.$$
Here we used that for every representation 
$\rho\co\pi_{1}(M)\to PSO(3,1)$ we have $H^{0}(\mu_{i};\mathfrak{so}(3,1)_{\Ad\rho})\cong \mathbf R^{2}$ if and only if $\rho(\mu_{i})\neq 1$.

Next the long exact sequence of the pair $(M,\mu)$ is given by
\begin{multline*} 0\to H^{0}(\mu;\mathfrak{so}(3,1)_{\Ad\rho_u}) \to H^{1}(M,\mu;\mathfrak{so}(3,1)_{\Ad\rho_u})\to \\ H^{1}(M;\mathfrak{so}(3,1)_{\Ad\rho_u}) \to
H^{1}(\mu;\mathfrak{so}(3,1)_{\Ad\rho_u})\to \cdots \,.
\end{multline*}
Therefore for all $u\in \tilde U$ the map
$H^{0}(\mu;\mathfrak{so}(3,1)_{\Ad\rho_u}) \to H^{1}(M,\mu;\mathfrak{so}(3,1)_{\Ad\rho_u})$ is injective and hence surjective
since $\dim(H^{1}(M,\mu;\mathfrak{so}(3,1)_{\Ad\rho_u}) )\leq 2k$. It follows from this that for all $u\in\tilde U$ 
$\dim(H^{1}(M,\mu;\mathfrak{so}(3,1)_{\Ad\rho_u}) )= 2k$ and that the map induced by the inclusion $\mu\subset M$ gives a monomorphism
\[0\to H^{1}(M;\mathfrak{so}(3,1)_{\Ad\rho_u}) \to
H^{1}(\mu;\mathfrak{so}(3,1)_{\Ad\rho_u}) \,.\]

Since the inclusion $\mu\subset M$ factors through the $\partial M$, $\mu\subset \partial M\subset M$, we obtain that the map $H^{1}(M;\mathfrak{so}(3,1)_{\Ad\rho_u}) \to H^{1}(\mu;\mathfrak{so}(3,1)_{\Ad\rho_u})$ 
factors through $H^{1}(M;\mathfrak{so}(3,1)_{\Ad\rho_u}) \to H^{1}(\partial M;\mathfrak{so}(3,1)_{\Ad\rho_u})$ hence for all $u\in \tilde U$ the map
\[ H^{1}(M;\mathfrak{so}(3,1)_{\Ad\rho_u}) \to H^{1}(\partial M;\mathfrak{so}(3,1)_{\Ad\rho_u}) \]
is injective. Hence 
\[ H^{0}(\partial M;\mathfrak{so}(3,1)_{\Ad\rho_u}) \to H^{1}(M,\partial M;\mathfrak{so}(3,1)_{\Ad\rho_u})\]
is an isomorphism. Moreover Poincar{\'e}--Lefschetz duality gives that
\[ H^{1}(M,\partial M;\mathfrak{so}(3,1)_{\Ad\rho_u}) \cong H^{2}(M;\mathfrak{so}(3,1)_{\Ad\rho_u})^{*}\]
and hence $\dim (H^{2}(M;\mathfrak{so}(3,1)_{\Ad\rho_u}) ) = 2k$. Now the Euler characteristic of $M$ is zero and $H^{0}(M;\mathfrak{so}(3,1)_{\Ad\rho_u})=0$ which implies 
$$\dim (H^{1}(M;\mathfrak{so}(3,1)_{\Ad\rho_u}) ) = 2k\,.$$
\end{proof}
\begin{remark}
This proposition can be seen as the algebraic part of 
Thurston's hyperbolic Dehn filling theorem.
\end{remark}

\subsection{Complex hyperbolic space}
\label{subsectioncomplex}

Consider $\mathbf C^{3,1}$ i.e.\ $\mathbf{C}^4$ with the 
hermitian product
$$
\langle w, z\rangle=   w_1 \bar z_1+  w_2 \bar z_2+  w_3 \bar z_3 
- w_4 \bar z_4 = w^t J \bar z =  z^* w
$$
where $z^*=\bar z^t J$.
Its projectivization $\mathbf P^{3,1} := \mathbf P(\mathbf C^{3,1})$ gives rise to complex hyperbolic space $\mathbf H^3_{\mathbf C }$. More precisely, $\mathbf H^3_{\mathbf C }=\{ [v]\in\mathbf P^{3,1}\mid \langle v, v\rangle <0\}$ 
cf.~\cite{Goldman,Epstein}. Here and in the sequel $[v]$ denotes the line generated by the non zero vector $v\in \mathbf C^{3,1}$.

Let
$$
SU(3,1)=\{A\in SL(4,\mathbf C)\mid \bar A^t J A= J \}.
$$
The group of holomorphic isometries of complex hyperbolic space is the projectivization
$PSU(3,1)=PU(3,1)$, with Lie algebra:
$$
\mathfrak{su}(3,1)=\{ a\in \mathfrak{sl}(4,\mathbf C)\mid \bar a^t J= -J a \}.
$$
The key point is that, as $SO(3,1)$--module, this Lie algebra has a decomposition:
\begin{equation}
\label{eqn:su}
\mathfrak{su}(3,1)=\mathfrak{so}(3,1)\perp \mathbf i\, \mathfrak{v}.
\end{equation}
Thus:

\begin{remark}
\label{remark:Vcomplex}
The subspace $\mathfrak{v}=\{ a\in\mathfrak{sl}(4)\mid a^t\,J=J\, a\}$ 
can be seen as the imaginary part of infinitesimal deformations in complex hyperbolic space.
\end{remark}

\begin{proof}[Proof of Theorem~\ref{thm:projectivehyperbolic}]
We define
$$
\mathfrak{v}_n=\{ a\in \mathfrak{sl}(n+1) \mid a^{t} J= J a\},
$$
where $J$ is the symmetric matrix with one negative and $n$ positive eigenvalues,
generalizing the definition of $\mathfrak{v}$ for $n=3$. The generalizations of Equations~(\ref{eqn:sl})
and (\ref{eqn:su}) are
$$
\mathfrak{sl}(n+1)=\mathfrak{so}(n,1)\perp  \mathfrak{v}_n
$$
and
$$
\mathfrak{su}(n,1)=\mathfrak{so}(n,1)\perp \mathbf i\, \mathfrak{v}_{n},
$$
that are isomorphisms of $SO(n,1)$--modules via the adjoint action.

Let $M^n$ denote a compact n-manifold whose interior has a finite volume hyperbolic structure, as in the statement of the theorem.
By Garland's infinitesimal rigidity \cite{Garland}, the map induced by inclusion
$$
H^1(M^n;\mathfrak{so}(n,1))\to H^1(\partial M^n;\mathfrak{so}(n,1))
$$
is injective (here all $SO(n,1)$--modules become $\pi_1(M^n)$--modules via the holonomy).
Thus $M^n$ is infinitesimally projectively rigid relative to the cusps if and only if
$$
H^1(M^n;  \mathfrak{v}_n)\to H^1(\partial M^n;  \mathfrak{v}_n   )
$$
is injective, and 
 $M^n$ is infinitesimally $\mathbf H^n_{\mathbf C}$--rigid relative to the cusps if and only if
$$
H^1(M^n;   \mathbf i \mathfrak{v}_n)\to H^1(\partial M^n;  \mathbf i \mathfrak{v}_n   )
$$
is injective. The theorem follows from the fact that $ \mathfrak{v}_n $ and $ \mathbf i \mathfrak{v}_n  $ are isomorphic
as $\pi_1(M^n)$--modules.
\end{proof}

We will use Remark~\ref{remark:Vcomplex} and Equation~(\ref{eqn:su})
to understand the computations for the cohomology with coefficients in $\mathfrak{v}$ in a Riemannian setting.

%

In order to understand the Killing form on $\mathfrak{su}(3,1)$ we follow the exposition of Goldman \cite[4.1.3]{Goldman}. 
Let 
$$
v_+=\begin{pmatrix}
    0 \\ 0 \\ 1 \\ 1
   \end{pmatrix}
\quad
\textrm{ and }
\quad
v_-=\begin{pmatrix}
    0 \\ 0 \\ -1 \\ 1
   \end{pmatrix}
$$ be two null vectors in 
$\mathbf{C}^{3,1}$ representing two distinct boundary points of $\mathbf{H}^3_\mathbf{C}$. Then the element
\[\eta := -\frac 1 2 (v_+ v_-^* - v_- v^*_+) = 
\begin{pmatrix} 
0 & 0 & 0 & 0\\
0 & 0 & 0 & 0\\
0 & 0 & 0 & 1\\
0 & 0 & 1 & 0\end{pmatrix}\]
is the infinitesimal generator of a 1-parameter subgroup of isometries fixing the points $[v_\pm]\in\partial \mathbf{H}^3_\mathbf{C}$ and translating along the geodesic between $[v_+]$ and $[v_-]$.

Decompose the Lie algebra $\mathfrak{su}(3,1)$ into eigenspaces 
\[ \mathfrak g_k = \Ker(\ad_\eta -k \mathbf{I})\]
of $\ad_\eta$. The eigenspace $\mathfrak g_k$ is nonzero only for $k\in\{0,\pm1,\pm2\}$. More explicitly we have:
\begin{equation}
\label{eqn:g0}
\mathfrak g_0 = \Big\{
\begin{pmatrix} 
a & 0 & 0 \\
0 & -\frac{\tr(a)}2 & t\\
0 &  t &-\frac{\tr(a)}2 
\end{pmatrix} \Big| a\in\mathfrak u(2),\; t\in\mathbf R\Big\},
\end{equation}
$\mathfrak g_{\pm1} = \{ v v^*_\pm - v_\pm v^* \mid v\in V(v_+,v_-)^\bot\}$ and
$\mathfrak g_{\pm2} = \{ \mathbf{i} s v_\pm v^*_\pm  \mid s\in \mathbf R\}$
where $V(v_+,v_-)$ denotes the vector space generated by $v_+$ and $v_-$. Note that $V(v_+,v_-)$ is the positive two-dimensional complex subspace of $\mathbf C^{3,1}$ given by $z_3=z_4=0$.
As usual we have $[\mathfrak g_k,\mathfrak g_l]\subset\mathfrak g_{k+l}$ with the convention that $\mathfrak g_{k+l} =0$ if $|k+l|>2$. This tells us immediately that $\mathfrak g_{k}$ is orthogonal with respect to the Killing form to $\mathfrak g_{l}$ for all $k\neq-l$.

Now let $G_\pm\subset PSU(3,1)$ denote the stabilizer of the point  
$[v_\pm]\in\partial \mathbf{H}^3_\mathbf{C}$. The Lie algebra 
$\mathfrak g_\pm$ of $G_\pm$ is given by
\[ \mathfrak g_\pm = \mathfrak g_0 \oplus \mathfrak g_{\pm1}
\oplus \mathfrak g_{\pm2}.\]
Note also that $\mathfrak h_\pm =\mathfrak g_{\pm1}
\oplus \mathfrak g_{\pm2}$ is the Lie algebra of parabolic transformations fixing the point $[v_\pm]$. 

As a consequence of this discussion we obtain the following lemma.

\begin{lemma}
\label{lem:orthogonalparabolic}
The Killing form of $\mathfrak{su}(3,1)$ restricted to 
$\mathfrak g_\pm$   is degenerated. More precisely,
the radical $rad(\mathfrak g_\pm) = \mathfrak g_\pm \cap \mathfrak g_\pm^\bot = \mathfrak h_\pm$ 
consist exactly the infinitesimal parabolic transformations. 
\end{lemma}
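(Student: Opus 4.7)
The plan is to exploit the grading $\mathfrak{su}(3,1)=\bigoplus_{k=-2}^{2}\mathfrak{g}_k$ together with the orthogonality between non-paired eigenspaces that has already been pointed out in the paragraph preceding the lemma. The two inclusions $\mathfrak{h}_\pm \subset \operatorname{rad}(\mathfrak{g}_\pm)$ and $\operatorname{rad}(\mathfrak{g}_\pm)\subset \mathfrak{h}_\pm$ will be handled separately.

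For $\mathfrak{h}_\pm\subset \operatorname{rad}(\mathfrak{g}_\pm)$, I would simply invoke the orthogonality relation $B(\mathfrak{g}_k,\mathfrak{g}_l)=0$ whenever $k+l\neq 0$, which is the standard consequence of $B$ being $\operatorname{ad}_\eta$-invariant (or equivalently, of $[\mathfrak{g}_k,\mathfrak{g}_l]\subset\mathfrak{g}_{k+l}$ combined with the fact that the Killing form vanishes on nilpotent endomorphisms whose eigenvalues do not cancel). Since $\mathfrak{g}_{\pm 1}$ and $\mathfrak{g}_{\pm 2}$ pair non-trivially only against $\mathfrak{g}_{\mp 1}$ and $\mathfrak{g}_{\mp 2}$ respectively, and neither of these sits inside $\mathfrak{g}_\pm = \mathfrak{g}_0\oplus\mathfrak{g}_{\pm 1}\oplus\mathfrak{g}_{\pm 2}$, the subspace $\mathfrak{h}_\pm$ is $B$-orthogonal to all of $\mathfrak{g}_\pm$.

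For the reverse inclusion, the key observation is that the restriction of $B$ to $\mathfrak{g}_0$ is non-degenerate. This follows directly from the non-degeneracy of $B$ on $\mathfrak{su}(3,1)$: choosing bases adapted to the grading, the Gram matrix of $B$ becomes block anti-diagonal, pairing $\mathfrak{g}_k$ with $\mathfrak{g}_{-k}$ only, so the $\mathfrak{g}_0\times\mathfrak{g}_0$ block must itself be non-degenerate. Now take $X=X_0+X_{\pm 1}+X_{\pm 2}\in\operatorname{rad}(\mathfrak{g}_\pm)$ with $X_k\in\mathfrak{g}_k$. For every $Y_0\in\mathfrak{g}_0\subset\mathfrak{g}_\pm$ the orthogonality above gives $0=B(X,Y_0)=B(X_0,Y_0)$, hence $X_0=0$ and $X\in\mathfrak{h}_\pm$.

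The remaining assertion, that $\mathfrak{h}_\pm$ is the Lie algebra of parabolic transformations fixing $[v_\pm]$, is not really something to prove here but a matter of identification, and it has been set up just above the statement: $\mathfrak{g}_{\pm 1}\oplus\mathfrak{g}_{\pm 2}$ consists of nilpotent elements of $\mathfrak{su}(3,1)$ fixing $v_\pm$ (they act on $v_\pm$ with eigenvalue zero and are strictly upper/lower triangular in the null basis $v_+,v_-$), which is the infinitesimal parabolic subgroup. I do not anticipate any real obstacle; the whole lemma is an exercise in graded linear algebra once the grading has been set up, and the only point that requires a small argument is the non-degeneracy of $B|_{\mathfrak{g}_0}$, which is immediate from the block-anti-diagonal structure of $B$.
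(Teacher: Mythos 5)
Your proof is correct and follows essentially the same route as the paper: both arguments rest on the grading $\mathfrak{su}(3,1)=\bigoplus_{k=-2}^{2}\mathfrak g_k$, the orthogonality $B(\mathfrak g_k,\mathfrak g_l)=0$ for $k\neq -l$, and the non-degeneracy of $B$, the paper phrasing it as the computation of $\mathfrak g_0^\bot$, $\mathfrak g_{\pm1}^\bot$, $\mathfrak g_{\pm2}^\bot$ and intersecting with $\mathfrak g_\pm$, while you split it into the two inclusions. If anything, you make explicit the non-degeneracy of $B|_{\mathfrak g_0}$ (via the block anti-diagonal Gram matrix), a point the paper uses implicitly when asserting $\mathfrak g_0^\bot=\mathfrak h_+\oplus\mathfrak h_-$.
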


\begin{proof}
Let us consider the sign $+$, the other case is analogous.
We have 
\[ \mathfrak g_0^\bot = \mathfrak h_+ \oplus \mathfrak h_-,\quad
\mathfrak g_1^\bot = \mathfrak g_0 \oplus \mathfrak h_+ \oplus \mathfrak g_{-2} \text{ and }
\mathfrak g_2^\bot = \mathfrak g_0 \oplus \mathfrak h_+ \oplus \mathfrak g_{-1}\,.\]
This follows since $\mathfrak g_{k}$ is orthogonal with respect to the Killing form to $\mathfrak g_{l}$ for all $k\neq-l$.
Hence $\mathfrak g_+ \cap \mathfrak g_+^\bot = 
\mathfrak g_+\cap \mathfrak g_0^\bot \cap \mathfrak g_1^\bot
\cap \mathfrak g_2^\bot =\mathfrak h_+=
\mathfrak g_{1}\oplus \mathfrak g_{2}$.
\end{proof}

\section{Invariant subspaces in the complex hyperbolic geometry}
\label{sec:invariantsubspaces}

In this section we shall compute subspaces of the $SO(3,1)$--module $\mathfrak v$ that are invariant by certain elements of
 $PSO(3,1)$. This will be used later for computing certain cohomology groups.
For a given  set of hyperbolic isometries $\Gamma\subset PSO(3,1)$, we let  $\mathfrak{v}^{\Gamma}$ denote the invariant subspace in $\mathfrak{v}$:
$$
\mathfrak{v}^{\Gamma}=\{v\in \mathfrak{v}\mid \Ad_\gamma(v)=v, \ \forall\gamma\in\Gamma\}. 
$$

For our computations, we will view 
elements in $\mathfrak v$ as lying in $\mathbf i \mathfrak v$, namely as infinitesimal isometries of
$\mathbf H^3_\mathbf C$. We shall make use of the  decomposition 
$$ \mathfrak{su}(3,1)^\Gamma = \mathfrak{so}(3,1)^\Gamma \oplus \mathbf i\mathfrak{v}^\Gamma\,.$$
and the following lemma (see  \cite[III.9.3]{Bourbaki} for a proof).
\begin{lemma}
\label{lem:tangentcommutator}
For $\gamma\in PSU(3,1)$,  $\mathfrak{su}(3,1)^\gamma=\Ker(\Ad_\gamma- \1)$ is the Lie algebra of the centralizer of $\gamma$ 
(i.e.\ the Lie subgroup of elements in $PSU(3,1)$ that commute with $\gamma$).
\end{lemma}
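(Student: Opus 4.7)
The plan is to identify the centralizer as a closed subgroup and then recover its Lie algebra from one-parameter subgroups contained in it. Write $Z(\gamma) = \{g\in PSU(3,1) \mid g\gamma = \gamma g\}$ for the centralizer. This is closed in $PSU(3,1)$ (cut out by the continuous condition $g\gamma g^{-1}\gamma^{-1} = \1$), hence by Cartan's closed-subgroup theorem it is an embedded Lie subgroup whose Lie algebra $\mathfrak z(\gamma)$ is
$$
\mathfrak z(\gamma) = \{ X\in \mathfrak{su}(3,1) \mid \exp(tX) \in Z(\gamma) \text{ for all } t\in\mathbf R\}.
$$

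Next, I would translate the defining condition. For $X\in\mathfrak{su}(3,1)$, the requirement $\exp(tX)\in Z(\gamma)$ for every $t$ reads $\gamma\exp(tX)\gamma^{-1}=\exp(tX)$, and using the naturality of the exponential with respect to conjugation this is $\exp(t\,\Ad_\gamma X)=\exp(tX)$ for all $t\in\mathbf R$. Differentiating at $t=0$ gives $\Ad_\gamma X = X$, so $\mathfrak z(\gamma)\subset \ker(\Ad_\gamma-\1)=\mathfrak{su}(3,1)^\gamma$.

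For the reverse inclusion, if $X\in\mathfrak{su}(3,1)^\gamma$ then $\Ad_\gamma X = X$, so trivially $\exp(t\,\Ad_\gamma X) = \exp(tX)$, hence $\gamma\exp(tX)\gamma^{-1}=\exp(tX)$ for every $t\in\mathbf R$, which places $\exp(tX)$ in $Z(\gamma)$ and therefore $X\in\mathfrak z(\gamma)$. Combining the two inclusions gives the stated equality.

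The only delicate point is really the passage between ``$\Ad_\gamma X = X$'' and ``$\exp(tX)$ commutes with $\gamma$ for all $t$'', which is a direct application of $\gamma \exp(tX)\gamma^{-1} = \exp(t\,\Ad_\gamma X)$; since this identity is an identity of one-parameter subgroups of a Lie group and not merely a statement at first order, no convergence or regularity issue arises, and no appeal to exponentials being locally injective is needed. In a published write-up it is legitimate simply to invoke \cite[III.9.3]{Bourbaki}, which proves exactly this general fact for the centralizer of any element in a Lie group; the specific nature of $PSU(3,1)$ plays no role.
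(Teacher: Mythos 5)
Your proof is correct. The paper gives no argument of its own for this lemma --- it simply cites \cite[III.9.3]{Bourbaki} --- and your reasoning (the centralizer is closed, hence a Lie subgroup by Cartan's closed-subgroup theorem; then $\gamma\exp(tX)\gamma^{-1}=\exp(t\,\Ad_\gamma X)$ together with differentiation at $t=0$ gives both inclusions, with no need for local injectivity of $\exp$) is exactly the standard proof behind that reference, as you yourself observe in your closing remark.
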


Alternatively, the computation of invariant subspaces could also be made with the analogue of Lemma~\ref{lem:tangentcommutator} for
$GL(4)$ or just by explicit computation of the adjoint action on $\mathfrak{v}$.

The centralizer of an element is obtained by means of the stabilizer of an invariant object in $\mathbf H^3_{\mathbf C}\cup \partial \mathbf H^3_{\mathbf C}$.
This explains the organization of this section, one subsection for each object.

\subsection{Geodesics.}
Consider the Riemannian geodesic $\gamma$ in $\mathbf H^3_{\mathbf C}$ between $[v_+]$ and $[v_-]$.
Let $\mathfrak g_0\subset \mathfrak{su}(3,1)$ denote the Lie algebra of the subgroup $G_0\subset PSU(3,1)$ which fixes the endpoints of the geodesic $\gamma$ (see \cite[4.1.3]{Goldman}).
Notice that $G_0\cong \mathbf R\times U(2)$, where $\mathbf R$ acts by translations and $ U(2)$ is the pointwise stabilizer, isomorphic to the stabilizer of a point in 
$\mathbf H^2_{\mathbf C}$, hence  $\mathfrak g_0\cong\mathbf R\oplus \mathfrak{u}(2)$.

\begin{lemma} 
\label{lem:invhyperbolic}
Let $A\in PSO(3,1)$ be a hyperbolic element of complex length $l+\mathbf i\,  \alpha$, $l\neq 0$.
\begin{enumerate}[(i)]
\item  If $\alpha\not\in\pi\mathbf Z$, then $\dim \mathfrak{v}^{A}=1$.
\item  If $\alpha\in\pi\mathbf Z$, then $\dim \mathfrak{v}^{A}=3$.
\end{enumerate}
\end{lemma}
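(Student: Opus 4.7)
The plan is to exploit the $PSO(3,1)$-module splitting $\mathfrak{su}(3,1) = \mathfrak{so}(3,1) \perp \mathbf{i}\mathfrak{v}$ from Equation~(\ref{eqn:su}). Since $\Ad_A$ respects this decomposition,
$$\dim_{\mathbf R}\mathfrak{v}^A = \dim_{\mathbf R}\mathfrak{su}(3,1)^A - \dim_{\mathbf R}\mathfrak{so}(3,1)^A,$$
and by Lemma~\ref{lem:tangentcommutator} the two terms on the right are the dimensions of the connected centralizers $Z^0_{PSU(3,1)}(A)$ and $Z^0_{PSO(3,1)}(A)$. The problem therefore reduces to two Lie-algebraic dimension counts.

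Because $A$ is loxodromic with $l\neq 0$, it has a unique invariant axis $\gamma\subset\mathbf H^3\subset\mathbf H^3_{\mathbf C}$; after conjugation I may assume its endpoints are $[v_+]$ and $[v_-]$, so that $A\in G_0\cong \mathbf R\times U(2)$ in the setup introduced before Lemma~\ref{lem:orthogonalparabolic}. The standard fact that the boundary fixed points of a loxodromic isometry of $\mathbf H^3_{\mathbf C}$ are unique shows $Z^0_{PSU(3,1)}(A)\subset G_0$, hence $Z^0_{PSU(3,1)}(A) = \mathbf R\times Z_{U(2)}(R)$, where $R\in U(2)$ is the rotational part of $A$. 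The condition that $A$ lies in the real subgroup $PSO(3,1)$ forces $R$ to be conjugate in $U(2)$ to $\diag(e^{\mathbf{i}\alpha}, e^{-\mathbf{i}\alpha})$, i.e.\ the complexification of a rotation by $\alpha$ on the real $2$-plane orthogonal to $\gamma$.

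The dimension count is then immediate. If $\alpha\not\in\pi\mathbf Z$, the eigenvalues of $R$ are distinct so $Z_{U(2)}(R)=U(1)\times U(1)$ has real dimension $2$, giving $\dim\mathfrak{su}(3,1)^A=3$; meanwhile $Z^0_{PSO(3,1)}(A)=\mathbf R\times SO(2)$ (translations along $\gamma$ together with rotations around $\gamma$) has dimension $2$, so $\dim\mathfrak v^A=1$. If instead $\alpha\in\pi\mathbf Z$, then $R=\pm\mathbf 1$ and $Z_{U(2)}(R)=U(2)$ has real dimension $4$, giving $\dim\mathfrak{su}(3,1)^A=5$; whereas $Z^0_{PSO(3,1)}(A)$ is still $\mathbf R\times SO(2)$ of dimension $2$, because the hyperbolic part of $A$ retains only a $1$-parameter real centralizer along $\gamma$. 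Thus $\dim\mathfrak v^A=3$.

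The only slightly subtle point is the inclusion $Z^0_{PSU(3,1)}(A)\subset G_0$; once that is in hand, everything else is linear algebra inside $U(2)$ and $SO(2)$. As a sanity check, the same answers can be obtained directly from the eigenvalues $e^{\pm l},\, e^{\pm \mathbf{i}\alpha}$ of $A$ on $\mathbf C^{3,1}$ by computing centralizer dimensions in $SL(4,\mathbf C)$ and descending to the real forms $SU(3,1)$ and $SO(3,1)$.
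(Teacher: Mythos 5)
Your proof is correct and takes essentially the same route as the paper's: both pass to the splitting $\mathfrak{su}(3,1)=\mathfrak{so}(3,1)\oplus\mathbf i\,\mathfrak v$, identify invariant subspaces with centralizer Lie algebras via Lemma~\ref{lem:tangentcommutator}, normalize the axis to the geodesic between $[v_\pm]$, and subtract the dimension of the centralizer in $PSO(3,1)$ (namely $\mathbf R\times SO(2)$) from that of the centralizer inside $G_0\cong\mathbf R\times U(2)$. Your explicit argument that the connected centralizer is contained in $G_0$, and your treatment of $R=\pm I$ (covering $\alpha\equiv 0$ as well as $\alpha\equiv\pi$), merely spell out points the paper leaves implicit.
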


\begin{proof}
We let $\gamma$ denote  the axis of $A$. 
After conjugation we might assume that $\gamma$ is the geodesic between $[v_+]$ and $[v_-]$ and hence
\[ A=
\begin{pmatrix} 
\cos\alpha & -\sin\alpha &0 & 0\\ 
\sin\alpha & \cos\alpha& 0&0\\
0 & 0 &\cosh l &\sinh l\\
0 & 0 &\sinh l &\cosh l\\
\end{pmatrix}.\]

If $\alpha\in \pi\mathbf Z$, then $A$  commutes with the whole stabilizer 
$G_0$, with Lie algebra $\mathfrak{g}_0$ (see Equation~(\ref{eqn:g0})).
The elements of $\mathbf i\,\mathfrak{v}^A=\mathbf i\, \mathfrak{v}\cap \mathfrak{g}_0$ are of the form 
$$
\begin{pmatrix} 
b \, \mathbf i & a \, \mathbf i &0 & 0\\ 
a \, \mathbf i & c \, \mathbf i & 0&0\\
0 & 0 & -\frac{b+c}2  \mathbf i & 0\\
0 & 0 & 0 & -\frac{b+c}2 \mathbf i\\
\end{pmatrix}, \textrm{ with }a, b, c\in\mathbf R.
$$
 Hence $\dim \mathfrak{v}^A =3$. 

If $\alpha\not \in \pi\mathbf Z$, then the elements of  $\mathbf i\, \mathfrak{v}^A$ are as before, but by setting $a=0$ and $b=c$, hence $\dim \mathfrak{v}^A=1$.
This corresponds to the $\mathfrak{u}(1)$ factor in 
 $\mathfrak g_0\cong\mathbf R\oplus \mathfrak{u}(2)$.
\end{proof}
\begin{remark}\label{rem:lox}
Note that if $A\in PSO(3,1)$ is a loxodromic element with complex length $l+\mathbf i \alpha$ with $l\neq0$ and $\alpha\not\in\pi\mathbf Z$ then $\mathfrak{v}^A=\mathfrak{v}^{G_0}$ is one-dimensional generated by
the vector 
$$
\begin{pmatrix} 
-1 & 0 &0 & 0\\ 
0 & -1 & 0&0\\
0 & 0 & 1 & 0\\
0 & 0 & 0 & 1
\end{pmatrix}\,.
$$
\end{remark}

\subsection{Complex hyperbolic lines}
The complex hyperbolic space is the projectivization of the subset of the time-like vectors of $\mathbf C^{3,1}$. 
 A \emph{complex hyperbolic line} is defined as the intersection of 
 $\mathbf H^3_{\mathbf C}$ with a complex projective line.
The group $SU(3,1)$ acts transitively on the  set of complex planes that contain time-like vectors. Hence all complex hyperbolic lines are  
isomorphic to $\mathbf H^1_\mathbf{C}$, and a standard model for a complex hyperbolic line is the image of the plane given by $x_1=x_2=0$. 
The intersection of a complex hyperbolic line with 
$\partial \mathbf H^3_\mathbf{C}$ is a smooth circle called a \emph{chain}. Two distinct boundary points  of $\mathbf H^3_\mathbf{C}$ are contained in a unique chain and the Riemannian geodesic between the two boundary points is contained in the corresponding complex hyperbolic line.

The identity component of the stabilizer of a chain is given by
$P(U(2)\times U(1,1)) \subset PSU(3,1)$.

\begin{lemma} Let $A\in PSO(3,1)$ be an elliptic element of rotation angle $\alpha\in (0,2\pi)$.
\begin{enumerate}[(i)]
\item If $\alpha=\pi$, then $\dim \mathfrak{v}^{A}=5$.
\item If $\alpha\neq\pi$, then $\dim \mathfrak{v}^{A}=3$.
\end{enumerate}
\end{lemma}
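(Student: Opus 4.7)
The plan is to follow the pattern of Lemma~\ref{lem:invhyperbolic}. Equation~(\ref{eqn:su}) gives a $PSO(3,1)$-equivariant splitting $\mathfrak{su}(3,1) = \mathfrak{so}(3,1) \oplus \mathbf{i}\,\mathfrak{v}$; taking $A$-invariants yields $\mathfrak{su}(3,1)^A = \mathfrak{so}(3,1)^A \oplus \mathbf{i}\,\mathfrak{v}^A$, so it suffices to compute
$$
\dim \mathfrak{v}^A \;=\; \dim \mathfrak{su}(3,1)^A - \dim \mathfrak{so}(3,1)^A .
$$
By Lemma~\ref{lem:tangentcommutator}, each dimension on the right equals the Lie algebra dimension of the centralizer of $A$ in $PSU(3,1)$, respectively in $PSO(3,1)$.

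First I would conjugate $A$ into the standard block form
$$
A = \begin{pmatrix} R(\alpha) & 0 \\ 0 & I_2 \end{pmatrix}, \qquad R(\alpha) = \begin{pmatrix} \cos\alpha & -\sin\alpha \\ \sin\alpha & \cos\alpha \end{pmatrix},
$$
so that the axis of $A$ is the geodesic between $[v_+]$ and $[v_-]$. In $PSO(3,1)$ the identity component of the centralizer of any elliptic element with $\alpha \in (0,2\pi)$ is $SO(2)\times SO(1,1)$ (rotations around the axis and translations along it), so $\dim \mathfrak{so}(3,1)^A = 2$ in both cases.

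The main step is to identify the centralizer of $A$ in $PSU(3,1)$. I would use the $A$-eigenspace decomposition $\mathbf{C}^{3,1} = V^- \oplus V^+$, where $V^-$ is spanned by $e_1,e_2$ with positive definite Hermitian form and $V^+$ is spanned by $v_+,v_-$ with form of signature $(1,1)$. Any element $B$ commuting with $A$ preserves this decomposition, and its restriction $B|_{V^-}$ must commute with $R(\alpha)$. For $\alpha \neq \pi$, $B|_{V^-}$ lies in the diagonal torus $U(1) \times U(1) \subset U(2)$ in the eigenbasis of $R(\alpha)$, while $B|_{V^+}$ may be any element of $U(1,1)$; the determinant one condition then yields dimension $1+1+4-1 = 5$, so $\dim \mathfrak{v}^A = 5 - 2 = 3$. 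For $\alpha = \pi$, $R(\pi) = -I_2$ is central in $U(2)$, the constraint on $V^-$ disappears, and the centralizer is the projectivization of $S(U(2)\times U(1,1))$, of dimension $4+4-1 = 7$, giving $\dim \mathfrak{v}^A = 7 - 2 = 5$.

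The subtle point I expect is the case $\alpha = \pi$, where $A$ is an involution in $PSU(3,1)$: one must verify that no projective element $[B]$ with $BAB^{-1} = -A$ enlarges the centralizer. Any such $B$ would have to exchange $V^-$ and $V^+$, but these carry Hermitian forms of signatures $(2,0)$ and $(1,1)$ respectively, so no element of $U(3,1)$ can intertwine them. Ruling out this potential contribution is the only genuinely non-routine step of the argument.
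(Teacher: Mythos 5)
Your proposal is correct and follows essentially the paper's own route: both compute $\dim \mathfrak{v}^{A}=\dim\mathfrak{su}(3,1)^{A}-\dim\mathfrak{so}(3,1)^{A}$ via Lemma~\ref{lem:tangentcommutator}, identifying the centralizer of $A$ in $PSU(3,1)$ inside the stabilizer $P(U(2)\times U(1,1))$ of the fixed complex line (your eigenspace decomposition $V^-\oplus V^+$ is the same computation, and the numbers $7-2=5$ and $5-2=3$ agree with the paper). The only remark is that your ``subtle point'' about elements with $BAB^{-1}=-A$ is not actually needed: such elements could at most add connected components to the centralizer, never dimension, and only the Lie algebra $\Ker(\Ad_A-\1)$ enters the count -- though your signature argument ruling them out is correct.
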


\begin{proof} As before we let $\gamma$ denote  the axis of $A$. 
After conjugation we might assume that $\gamma$ is the geodesic between $[v_+]$ and $[v_-]$ and hence
\[ A=
\begin{pmatrix} 
\cos\alpha & -\sin\alpha &0 & 0\\ 
\sin\alpha & \cos\alpha& 0&0\\
0 & 0 &1 &0\\
0 & 0 &0 &1
\end{pmatrix}.\]

If $\alpha=\pi$ then a direct calculation gives that $\mathbf i\, \mathfrak{v}^A$ consists of matrices of the form:
$$
\begin{pmatrix} 
a \, \mathbf i & b \, \mathbf i  &0 & 0\\ 
b\, \mathbf i  & c \, \mathbf i & 0&0\\
0 & 0 & d \, \mathbf i &   e \, \mathbf i \\
0 & 0 & - e \, \mathbf i & - (a+c+d)\, \mathbf i\\
\end{pmatrix}, \quad \text{with $a, b, c,d,e \in\mathbf R$.}
$$
Hence $\dim \mathfrak{v}^A=5$. Notice that  $\mathbf i\, \mathfrak{v}^A$ is the imaginary part of  $\mathfrak{s}(\mathfrak{u}(2)\oplus \mathfrak{u}(1,1))$.

If  $\alpha\neq \pi$, then  $\mathbf i\, \mathfrak{v}^A$ consists of the previous matrices that in addition satisfy $b=0$ and $a=c$. Hence
$\mathbf i\, \mathfrak{v}^A$ is the imaginary part of $\mathfrak{u}(1,1)$ (viewed in $\mathfrak{s}(\mathfrak{u}(2)\oplus \mathfrak{u}(1,1))$) and has dimension $3$.
\end{proof}

\subsection{Points at infinity and the Heisenberg geometry}
\label{subsection:Heisenberg}

In the sequel we will use the notation of Section~\ref{subsectioncomplex},
i.e.\ we will fix two light-like vectors $v_\pm\in\mathbf C^{3,1}$ representing 
two distinct boundary points $[v_\pm]\in\partial \mathbf H^3_\mathbf{C}$. Moreover we will use the root-space decomposition of $\mathfrak{su}(3,1)$. The Heisenberg group $\mathcal H_-$ is the group of parabolic transformations fixing the point $[v_-]$, i.e.\
$\exp\co\mathfrak g_{-1}\oplus \mathfrak g_{-2}\to\mathcal H_-$ is given by 
\begin{align}
\exp(v_- v^* - v v_-^*&+ \mathbf{i}\ t\ v_-v_-^*)\notag \\ &=  I_4 + v_- v^* - v v_-^*-
(\|v\|^2/2 -\mathbf{i}t) v_-v_-^*\notag\\
&=
\begin{pmatrix} 
1 & 0 & z_1 & z_1 \\
0 & 1 & z_2 & z_2 \\
-\bar z_1 & -\bar z_2 & 1-\|v\|^2/2 +\mathbf{i}t & -\|v\|^2/2 +\mathbf{i}t \\
\bar z_1 & \bar z_2 & \|v\|^2/2 -\mathbf{i}t & 1+\|v\|^2/2 -\mathbf{i}t
\end{pmatrix}\notag \\ &=: H(z_1,z_2,t) \label{eqn:heisemberg}
\end{align}
where $v=(z_1,z_2,0,0)^t\in v_+^\bot\cap v_-^\bot$ is a space-like vector and hence $\langle v,v\rangle = \|v\|^2= |z_1|^2+|z_2|^2 \geq 0$.

Following the exposition in Goldman's book \cite[4.2]{Goldman}, the boundary at $\infty$ of $\mathbf H_{\mathbf C}^3$  minus the point $[v_-]$ 
can be identified with a \emph{Heisenberg space}, i.e.\ a space equipped with a simply transitive left action of the Heisenberg group 
$\mathcal H_-$. Hence by looking at the orbit of $[v_+]$ we have a bijection
$\mathcal H_-  \to \partial \mathbf H_{\mathbf C}^3\setminus\{[v_-]\}$ given by
\[  H(z_1,z_2,t) \mapsto H(z_1,z_2,t) [v_+] = 
\begin{bmatrix}  2z_1\\ 2z_2\\ 
1-\|z\|^2 +2\mathbf i t\\ 
1+\|z\|^2   -2\mathbf i t\end{bmatrix}\]
where $\|z\|^2= |z_1|^2 + |z_1|^2$.

In the sequel we shall  represent points of $\mathcal H_-$ by triples of points $(z_1,z_2,t)$ where $z_1,z_2\in \mathbf C$, $t\in\mathbf R$ with multiplication
\begin{multline}
(\omega_1,\omega_2, s)\cdot (z_1,z_2,t) = (\omega_1+z_1,\omega_2+z_2,s+t+\operatorname{Im}(\omega_1\bar z_1+\omega_2\bar z_2) ),
\\
\forall  (\omega_1,\omega_2, s), (z_1,z_2,t)\in\mathcal H.
\end{multline}
Therefore, $\mathcal H_-$ is a nilpotent $5$--dimensional real  Lie group, which is a nontrivial central extension
\[ 0\to\mathbf R\to \mathcal H_-\to\mathbf C^2\to 0\,.\]
The center are the elements of the form $(0,0,t)$, $t\in\mathbf R$.

In the sequel we will make use of the \emph{Siegel domain model} 
$\mathfrak H^3$ of $\mathbf H_\mathbf{C}^3$. Here
\[ \mathfrak H^3 =\Big\{ w =\begin{pmatrix} w_1\\w_2\\w_3\end{pmatrix}\in\mathbf C^3 \; \Big| \;|w_1|^2 + |w_2|^2 < 2\Re(w_3)\Big\}\]
is obtained in the following way: we choose the point $[v_-]\in\partial\mathbf{H}_\mathbf{C}^3$ and we denote by $H\subset \mathbf P^{3,1}$ the projective hyperplane tangent to $\partial\mathbf{H}_\mathbf{C}^3$ at $[v_-]$. More precisely, $H$ is the projectivization of $v_-^\bot \subset \mathbf C^{3,1}$ given by the equation $z_3+z_4 =0$. The corresponding affine embedding $\mathbf C^3 \to \mathbf{C P}^{3}\setminus H$ is given by
\[  \begin{pmatrix} w_1\\w_2\\w_3\end{pmatrix} \mapsto
\begin{bmatrix}  w_1\\  w_2\\ 1/2 - w_3\\ 1/2 + w_3\end{bmatrix}\;.
\]
It is easy to see that $\mathbf{H}_\mathbf{C}^3$ corresponds to the Siegel domain $\mathfrak H^3\subset \mathbf C^3$.
In this model the whole stabilizer $G_-$ of the point $[v_-]$ at infinity is the semidirect product:
$$
G_-=\mathcal H_- \rtimes (U(2)\times \mathbf R)\,.
$$
Here $U(2)$ acts linearly on the factor $\mathbf C^2$, and trivially on the factor $\mathbf R$. Moreover $\mathbf R$ acts as follows:
$$
(I_2,\lambda) (z_1,z_2,t) (I_2,-\lambda)=(e^{-\lambda} z_1,e^{-\lambda} z_2, e^{-2\lambda} t), \ \forall \lambda\in\mathbf R,\ \forall (z_1,z_2,t)\in\mathcal H.
$$
Hence the product on $G_{-}$ is given by :
\begin{align*}
(z_1,&z_2,t) (A,\lambda) \cdot (z'_1,z'_2,t') (A',\lambda') \\
&= (z_1,z_2,t) \big(e^{-\lambda}(z_1',z_2')A^{t} ,e^{-2\lambda}t'\big) (AA',\lambda+\lambda'), 
\end{align*}
for all $(z_1,z_2,t),(z_1',z_2',t') \in\mathcal H$, $A,A'\in U(2)$ and $\lambda,\lambda'\in \mathbf R$.

In this construction, the subgroup of real parabolic transformations corresponds to $\mathbf R^2\times \{0\}\subset \mathcal H_-$.


\begin{lemma}
\label{lem:invparabolic}
\begin{enumerate}[(i)]
\item If $A$ is a nontrivial parabolic element of $PSO(3,1)$, then  
$
\dim \mathfrak{v}^A=3
$.
\item
If $\Gamma < PSO(3,1)$ is a rank 2 parabolic subgroup, then
$
\dim \mathfrak{v}^\Gamma=1
$.
\end{enumerate}
\end{lemma}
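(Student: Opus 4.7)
The strategy is to use the orthogonal decomposition $\mathfrak{su}(3,1) = \mathfrak{so}(3,1) \perp \mathbf{i}\,\mathfrak{v}$ from (\ref{eqn:su}) to convert the computation into a comparison of centralizer dimensions. Since $A \in PSO(3,1) \subset PSU(3,1)$ and the Killing form of $\mathfrak{su}(3,1)$ is $\Ad$-invariant, the operator $\Ad_A$ preserves both summands, so
\[ \dim \mathfrak{v}^A \;=\; \dim\mathfrak{su}(3,1)^A \,-\, \dim \mathfrak{so}(3,1)^A. \]
By Lemma~\ref{lem:tangentcommutator} (and its real analogue for $PSO(3,1)$), these two numbers are exactly the dimensions of the centralizers of $A$ (resp.\ $\Gamma$) in $PSU(3,1)$ and in $PSO(3,1)$. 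The centralizer in $PSO(3,1)\cong PSL_2(\mathbf C)/\{\pm I\}$ of any nontrivial parabolic $A$ is the $2$-dimensional abelian group of all parabolic translations sharing the boundary fixed point of $A$; the same holds verbatim for a rank-$2$ parabolic subgroup $\Gamma$, since $\Gamma$ is contained in, and determines, this same group. Hence in both cases $\dim \mathfrak{so}(3,1)^A = 2$, and the problem reduces to identifying the centralizer in $PSU(3,1)$.

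For both parts I would conjugate so that the common boundary fixed point is $[v_-]$, placing $A$ (resp.\ the generators of $\Gamma$) inside the real parabolic subgroup $\mathbf{R}^2\times\{0\}\subset\mathcal H_-$ from Subsection~\ref{subsection:Heisenberg}. Because a parabolic isometry fixes $[v_-]$ uniquely, its centralizer in $PSU(3,1)$ is automatically contained in the stabilizer $G_- = \mathcal H_-\rtimes(U(2)\times\mathbf R)$. The entire computation then reduces to commutator bookkeeping using the Heisenberg multiplication law in Subsection~\ref{subsection:Heisenberg} together with the explicit action of $U(2)\times\mathbf R$ on $\mathcal H_-$.

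For part (i), using the real rotation subgroup $O(2)\subset U(2)$ one may further normalize $A=(1,0,0)$. The Heisenberg commutator formula then shows that an element $(z_1,z_2,t)\in\mathcal H_-$ commutes with $A$ if and only if $\operatorname{Im}(z_1)=0$, contributing a $4$-dimensional subgroup; the dilation factor $\mathbf R$ rescales $A$ nontrivially and so contributes nothing; and the $U(2)$-stabilizer of the vector $(1,0)\in\mathbf C^2$ is a copy of $U(1)$, contributing $1$ more dimension. Summing gives $\dim\mathfrak{su}(3,1)^A=5$, so $\dim\mathfrak{v}^A=5-2=3$. For part (ii), since the commutation condition $\operatorname{Im}(a_1\bar z_1+a_2\bar z_2)=0$ is $\mathbf R$-linear in $(a_1,a_2)$, the centralizer of $\Gamma$ equals that of the whole subgroup $\mathbf R^2\times\{0\}$; this forces $z_1,z_2\in\mathbf R$ (giving the $3$-dimensional real Heisenberg $\mathbf R^2\times\mathbf R$), while the $U(2)\times\mathbf R$ factor collapses entirely, since a unitary matrix fixing all of $\mathbf R^2\subset\mathbf C^2$ pointwise is necessarily the identity. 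Hence $\dim\mathfrak{su}(3,1)^\Gamma=3$ and $\dim\mathfrak{v}^\Gamma=3-2=1$. The only thing requiring care is the Heisenberg bookkeeping and the semidirect-product action on $\mathcal H_-$; there is no conceptual obstacle, since both are immediate from the explicit formulas already collected in Subsection~\ref{subsection:Heisenberg}.
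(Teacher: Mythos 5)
Your argument is correct and follows essentially the same route as the paper: identify $\mathfrak v^A$ via the splitting $\mathfrak{su}(3,1)=\mathfrak{so}(3,1)\oplus\mathbf i\,\mathfrak v$ and Lemma~\ref{lem:tangentcommutator}, normalize $A=(1,0,0)$ (resp.\ $\Gamma<\mathbf R^2\times\{0\}$) in the Heisenberg model, note the centralizer lies in $G_-$ by uniqueness of the fixed point, compute it to be $5$-dimensional (resp.\ the real Heisenberg $\mathbf R^3$), and subtract the $2$-dimensional real parabolic centralizer. The only point worth a passing remark is that your dimension count treats the centralizer in the semidirect product $G_-=\mathcal H_-\rtimes(U(2)\times\mathbf R)$ as the product of the centralizer in $\mathcal H_-$ with the stabilizer of $A$ in $U(2)\times\mathbf R$; this is not automatic for semidirect products, but the condition $g(A)=h^{-1}Ah$ does force it here (as $g(A)$ has zero central coordinate while $h^{-1}Ah$ differs from $A$ only in the center), which is exactly the ``direct calculation'' the paper invokes.
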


\begin{proof}
Using the representation in the Heisenberg group $\mathcal H_-$, we may assume that up to conjugation $A$ is $(1,0,0)\in  \mathcal H_-$.
Note that the centralizer of $A$ is contained in $G_-$. This follows from the fact that $A$ has a unique fixed point on 
$\overline{\mathbf H^3_\mathbf{C}}$ and every element which commutes with $A$ has to fix this point.

Now a direct calculation gives that the centralizer of $A$ in $G_-$ is $5$--dimensional and given by
\[ \Big\{ ( s, z, t) \begin{pmatrix} 1 & 0 \\ 0 & a\end{pmatrix} \in G_- \mid s,t\in \mathbf R,\, z\in\mathbf C\text { and } a\in U(1)  \Big\}.\] 

Thus $\dim (\mathfrak{su}(3,1)^A)=5$, and since $\dim (\mathfrak{so}(3,1))^A=2$ (the tangent space to the real parabolic group itself), the first assertion follows.

For the last assertion, we view $\Gamma$ as a rank 2 subgroup of the Heisenberg group
$$
\Gamma < \mathbf R^2\times\{ 0\} < \mathcal H_-.
$$ 
Its centralizer is contained in $G_-$ and is precisely the subgroup of elements with real coordinates:
$$
\mathbf R^3\cong \{(s_1,s_2,t)  \in \mathcal H_- \mid s_1,s_2,t\in \mathbf R\} < \mathcal H_-\,.
$$
As the subgroup of real parabolic transformations
$\mathbf R^2\times \{0\}$ is  the centralizer   of $\Gamma$  in $PSO(3,1)$, it follows that 
$\mathfrak{v}^\Gamma=\{(0,0)\}\times\mathbf R $ is one dimensional.
\end{proof}

\section{The cohomology of the torus}
\label{sec:cohmologytorus}

In this section, we analyze the cohomology of the boundary $\partial M$ and the image
of the map induced by inclusion $\partial M\subset M$, which is a Lagrangian subspace.

\subsection{A Lagrangian subspace}

As in Section~\ref{sec:Thurstonsslice}, let $\rho_u$ denote a representation contained in Thurston's slice, where $u=(u_1,\ldots,u_k)\in U\subset \mathbf C^k$
is a point in the deformation space.
The subspace invariant by the image of the peripheral subgroup of the $i$th component is denoted by $\mathfrak{v}^{\rho_u(\pi_1(\partial_i M))}$,
and its orthogonal complement by
$$
\big(\mathfrak{v}^{\rho_u(\pi_1(\partial_i M))}\big)^\bot=
\{ v\in \mathfrak{v} \mid B(v,w)=0,\ \forall w\in \mathfrak{v}^{\rho_u(\pi_1(\partial_i M))}\}. 
$$

\begin{lemma} \label{lem:invkilling}
\begin{enumerate}[(i)]
	\item For $u_i\neq 0$, the radical of 
	$\mathfrak{v}^{\rho_u(\pi_1(\partial_i M))}$ is trivial, i.e.\
$$\big(\mathfrak{v}^{\rho_u(\pi_1(\partial_i M))}\big)^\bot\cap \mathfrak{v}^{\rho_u(\pi_1(\partial_i M))}= 0.$$ 

              \item For every $u\in U$, the invariant subspace $\mathfrak{v}^{\rho_u(\pi_1(\partial_i M))}$ has dimension one.
\end{enumerate}
\end{lemma}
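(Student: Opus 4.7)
The plan is to prove (ii) first and derive (i) from the explicit description of the one-dimensional invariant line produced in (ii). The parabolic case $u_i=0$ of (ii) is immediate from Lemma~\ref{lem:invparabolic}(ii), since $\rho_u(\pi_1(\partial_i M))$ is then a rank two parabolic subgroup of $PSO(3,1)$; note that in this case the generator of $\mathfrak{v}^{\rho_u(\pi_1(\partial_i M))}$ is an infinitesimal parabolic, so by Lemma~\ref{lem:orthogonalparabolic} the Killing form vanishes on it, which is precisely why (i) must exclude the cusp case.

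For $u_i\neq 0$ in (ii), the commuting elements $\rho_u(\mu_i)$ and $\rho_u(\lambda_i)$ share an invariant geodesic $\gamma_i$ of $\mathbf H^3$ (they are loxodromic, or one of them is elliptic if $u_i$ is purely imaginary). After conjugation taking $\gamma_i$ to the geodesic between $[v_-]$ and $[v_+]$, both sit in $G_0\cap PSO(3,1)$ with the block diagonal form displayed in the proof of Lemma~\ref{lem:invhyperbolic}, with rotation angles $\alpha_i=\operatorname{Im}(u_i)$ and $\alpha'_i=\operatorname{Im}(v_i)$. Using Lemma~\ref{lem:tangentcommutator} inside $G_0\cong\mathbf R\times U(2)$, I will compute the joint centralizer in $PSU(3,1)$ and show it equals $\mathbf R\times T$, where $T\supset SO(2)$ is the maximal torus of $U(2)$; subtracting the two dimensions of $\mathfrak{so}(3,1)^{\rho_u(\pi_1(\partial_i M))}\cong\mathbf R\oplus\mathfrak{so}(2)$ (translation along and rotation around $\gamma_i$) then yields $\dim\mathfrak{v}^{\rho_u(\pi_1(\partial_i M))}=1$, with generator (after conjugation) equal to $b=\diag(1,1,-1,-1)$, that is, the infinitesimal generator of the center of the $U(2)$ factor of $G_0$ (multiplied by $\mathbf{i}$).

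Part (i) then follows at once: since the invariant subspace is one-dimensional, the Killing form restricted to it is either identically zero or non-degenerate, and the direct computation $B(b,b)=8\tr(b^2)=32\neq 0$ rules out the zero alternative. The main obstacle is controlling the resonant angles $\alpha_i,\alpha'_i\in\pi\mathbf Z$ at which the rotation $r_{\alpha}\in SO(2)\subset U(2)$ becomes $\pm I$ and is therefore central in $U(2)$, so that the centralizer of a single element enlarges to $\mathbf R\times U(2)$ (recovering $\dim\mathfrak{v}^{\rho_u(\mu_i)}=3$ from Lemma~\ref{lem:invhyperbolic}(ii)). The hypothesis $\operatorname{Im}(\tau_i(u))\neq 0$ prevents $\alpha_i=\alpha'_i=0$, and shrinking $U$ so that $|u_i|<\pi$ for each $i$ rules out $\alpha_i,\alpha'_i=\pm\pi$; hence at least one of $r_{\alpha_i}, r_{\alpha'_i}$ is non-central in $U(2)$, and the joint centralizer drops to $\mathbf R\times T$ as required.
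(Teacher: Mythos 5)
Your plan is correct, and it reaches the conclusion by a slightly different mechanism than the paper. For $u_i\neq 0$ the paper does not centralize the pair of generators: it runs a short case analysis on $u_i$ (using Theorem~\ref{thm:thurston} and the nonvanishing of $\operatorname{Im}\tau_i$) to exhibit a single element $\gamma\in\{\mu_i,\lambda_i,\lambda_i\mu_i\}$ whose image has nonzero translation length and rotation angle outside $\pi\mathbf Z$, and then quotes Lemma~\ref{lem:invhyperbolic}(i): already $\mathfrak v^{\rho_u(\gamma)}$ is the line spanned by a conjugate of $\diag(1,1,-1,-1)$, which is invariant under the whole stabilizer of the axis, hence equals $\mathfrak v^{\rho_u(\pi_1(\partial_i M))}$, and (i) follows from $B$ being nonzero on that generator --- exactly your final step. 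Your variant computes the joint centralizer of $\rho_u(\mu_i)$ and $\rho_u(\lambda_i)$ inside $G_0\cong\mathbf R\times U(2)$; what this buys is that you never need a single element enjoying both properties at once (non-centrality of one rotation part plus loxodromy of some peripheral element suffice), at the price of treating the resonance $\alpha_i,\alpha_i'\in\pi\mathbf Z$ yourself. Two points to tighten: first, to trap the joint centralizer inside $G_0$ you should state explicitly that at least one of $\rho_u(\mu_i),\rho_u(\lambda_i)$ is loxodromic (this follows from $\operatorname{Im}\tau_i\neq 0$ by the same argument you use to exclude $\alpha_i=\alpha_i'=0$; your parenthetical only covers $u_i$ purely imaginary, while $v_i$ purely imaginary can also occur); second, $|u_i|<\pi$ alone does not bound $\alpha_i'=\operatorname{Im}(v_i)$, so you must also shrink $U$ to get $|v_i(u)|<\pi$, which is automatic since $v_i=\tau_i u_i\to 0$, and such shrinking is consistent with the paper's conventions (its own argument likewise uses smallness of $U$, e.g.\ $\operatorname{Im}\tau_i(u)\neq 0$ on all of $U$). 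Your side remark that at $u_i=0$ the invariant line consists of infinitesimal parabolics, so that (i) must exclude the cusp case by Lemma~\ref{lem:orthogonalparabolic}, is correct and consistent with Lemma~\ref{lem:vutaylor}.
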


\begin{proof}
When $u_i\neq 0$, $\rho_u(\pi_1(\partial_i M))$ consists of non-parabolic isometries that preserve a geodesic,
and we want to apply Lemma~\ref{lem:invhyperbolic} (i) and Remark~\ref{rem:lox}. For this, we need to find an element $\gamma\in\pi_1(\partial_i M)$ such that $\rho_u(\gamma )$ satisfies the hypothesis of 
Lemma~\ref{lem:invhyperbolic} (i), namely that $\rho_u(\gamma )$ has nonzero translational part and its rotation angle is not an integer multiple of $\pi$.
If the real part of 
$u_i$ does not vanish and the imaginary part of $u_i$ 
is not contained in $\mathbf Z\pi$  then we choose $\mu_i $.
If the real part of $u_i$ vanishes, then by Theorem~\ref{thm:thurston} the real part of $v_i$ does not, and the condition on the complex length applies to either $\gamma=\lambda_i$
or $\gamma=\lambda_i\mu_i$, that have respective complex lengths $v_i$ and $u_i+v_i$. The same argument applies when the imaginary part of $u_i$ is zero. 

By Lemma~\ref{lem:invhyperbolic} (i) and its proof,  $\mathfrak{v}^{\rho_u(\pi_1(\partial_i M))}$ is the one dimensional subspace generated by (a conjugate of) 
$$
\left( \begin{smallmatrix}
    	1&   &  & \\
	 & 1 &   &    	\\
	 &  & -1 &   \\
	 &  &   & -1 \\
\end{smallmatrix}\right), 
$$ 
which is a non-isotropic element for the Killing form,
and both assertions of the lemma are clear when $u_i\neq 0$. 

When $u_i=0$, assertion (ii) is precisely Lemma~\ref{lem:invparabolic}~(ii).
\end{proof}

\begin{cor}\label{cor:h1boundary}
For every $u\in U$ we have $H^j(\partial M;\mathfrak{v}_{\Ad\rho_u})=0$ for $j>2$ and
\begin{align*}
\dim H^0(\partial M;\mathfrak{v}_{\Ad\rho_u}) &= k, \\
\dim H^1(\partial M;\mathfrak{v}_{\Ad\rho_u}) &=2k,\\
\dim H^2(\partial M;\mathfrak{v}_{\Ad\rho_u}) &=k\,.
\end{align*}
\end{cor}
\begin{proof}
We have 
$$H^0(\partial M;\mathfrak{v}_{\Ad\rho_u})\cong 
\bigoplus_{i=1}^k\mathfrak{v}^{\rho_u(\pi_1(\partial_i M))}$$
and hence by Lemma~\ref{lem:invkilling}~(ii) we obtain 
$$\dim H^0(\partial M;\mathfrak{v}_{\Ad\rho_u}) = k\,.$$ 
Now Poincar\'e duality gives
$\dim H^2(\partial M;\mathfrak{v}_{\Ad\rho_u}) = k$ and since the Euler characteristic of 
$\partial M$ vanishes we obtain $\dim H^1(\partial M;\mathfrak{v}_{\Ad\rho_u}) = 2k$.
\end{proof}

The cup product on $H^1(\partial M;\mathfrak{v}_{\Ad\rho_u})$ is the orthogonal sum of the cup products on the groups 
$H^1(\partial_i M;\mathfrak{v}_{\Ad\rho_u})$. More precisely, if we denote by
$\mathit{res}_i\co H^1(\partial M;\mathfrak{v}_{\Ad\rho_u})\to H^1(\partial_i M;\mathfrak{v}_{\Ad\rho_u})$ the restriction induced by the inclusion
$\partial_i M\hookrightarrow \partial M$, then for 
$z_1,z_2\in H^1(\partial M;\mathfrak{v}_{\Ad\rho_u})$ we have
\begin{equation}
\label{eqn:cupsum} 
z_1\cup z_2 = \sum_{i=1}^k \mathit{res}_i(z_1)\cup\mathit{res}_i(z_2)\,.
\end{equation}
Note that this defines a symplectic form 
$$\omega\co H^1(\partial M;\mathfrak{v}_{\Ad\rho_u})\otimes H^1(\partial M;\mathfrak{v}_{\Ad\rho_u})\to\mathbf R$$
given by $\omega(z_1,z_2)=z_1\cup z_2$.

\begin{lemma} 
\label{lem:rankone} 
Let $u=(u_1,\ldots,u_k)\in U$.
\begin{enumerate}[(i)]
              \item When $u_i\neq 0$, there is a natural isomorphism
$$
H^*(\partial_i M; \mathfrak{v}_{\Ad\rho_{u}})\cong H^*(\partial_i M;\mathbf R)\otimes \mathfrak{v}^{\rho_u(\pi_1(\partial_i M))}.
$$

              \item For  $u\in U$, $\dim H^1(\partial M; \mathfrak{v}_{\Ad\rho_{u}})=2\, k$, and the image of the map
$$H^1(M; \mathfrak{v}_{\Ad\rho_{u}})\to H^1(\partial M; \mathfrak{v}_{\Ad\rho_{u}})$$ is a Lagrangian subspace 
of $H^1(\partial M; \mathfrak{v}_{\Ad\rho_u})$ for the form $\omega$
(in particular it has dimension $k$).
\end{enumerate}
\end{lemma}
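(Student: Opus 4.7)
The plan is to reduce the computation of $H^*(\partial_i M;\mathfrak{v}_{\Ad\rho_u})$ to a computation with trivial coefficients via the orthogonal splitting furnished by Lemma~\ref{lem:invkilling}, and then to obtain the Lagrangian property from Poincar\'e--Lefschetz duality for the pair $(M,\partial M)$.

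For part (i), set $V=\mathfrak{v}^{\rho_u(\pi_1(\partial_i M))}$ and assume $u_i\neq 0$. Lemma~\ref{lem:invkilling}(i) says that the Killing form $B$ is non-degenerate on $V$, giving an orthogonal decomposition of $\rho_u(\pi_1(\partial_i M))$-modules $\mathfrak{v}=V\oplus W$ with $W=V^\bot$. All $\pi_1(\partial_i M)$-invariants of $\mathfrak{v}$ lie in $V$, so $W^{\pi_1(\partial_i M)}=V\cap W=0$. Non-degeneracy of $B$ on $\mathfrak{v}$ and on $V$ forces $B|_W$ to be non-degenerate as well, whence $W\cong W^*$ as $\pi_1(\partial_i M)$-modules. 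Poincar\'e duality for $T^2$ then gives $\dim H^2(\partial_i M;W)=\dim H^0(\partial_i M;W^*)=0$, and since $\chi(T^2)=0$, we conclude $H^1(\partial_i M;W)=0$. Thus the inclusion $V\hookrightarrow\mathfrak{v}$ of trivial modules induces an isomorphism $H^*(\partial_i M;V)\cong H^*(\partial_i M;\mathfrak{v})$, and since $V$ is a one-dimensional trivial module, the universal coefficient formula yields $H^*(\partial_i M;V)\cong H^*(\partial_i M;\mathbf R)\otimes V$.

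For part (ii), the dimension count holds for every $u\in U$: Lemma~\ref{lem:invkilling}(ii) gives $\dim H^0(\partial_i M;\mathfrak{v})=\dim\mathfrak{v}^{\rho_u(\pi_1(\partial_i M))}=1$; since $\mathfrak{v}\cong\mathfrak{v}^*$ via $B$, Poincar\'e duality on the torus yields $\dim H^2(\partial_i M;\mathfrak{v})=1$ as well, and $\chi(T^2)=0$ then forces $\dim H^1(\partial_i M;\mathfrak{v})=2$. Summing over the $k$ boundary tori gives $\dim H^1(\partial M;\mathfrak{v})=2k$. For the Lagrangian statement, the cup product pairing on $H^1(\partial M;\mathfrak{v})$ defined in~(\ref{eqn:cupproduct}) is antisymmetric ($pq=1$ is odd) and non-degenerate, the latter by Poincar\'e duality on $\partial M$ with the self-dual coefficient module $\mathfrak{v}$. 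Poincar\'e--Lefschetz duality for the oriented 3-manifold $M$ identifies the restriction $r\co H^1(M;\mathfrak{v})\to H^1(\partial M;\mathfrak{v})$ with the adjoint of the connecting map $\delta\co H^1(\partial M;\mathfrak{v})\to H^2(M,\partial M;\mathfrak{v})$ under the cup product pairing on the boundary and the non-degenerate pairing $H^1(M;\mathfrak{v})\otimes H^2(M,\partial M;\mathfrak{v})\to\mathbf R$. Therefore the annihilator of $\mathrm{image}(r)$ under the cup product pairing equals $\ker\delta$, and by exactness of the long exact sequence of the pair $(M,\partial M)$, $\ker\delta=\mathrm{image}(r)$. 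Thus $\mathrm{image}(r)$ is self-annihilating, that is, Lagrangian, of dimension exactly $k$.

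The principal technical point I anticipate is the precise adjointness between the restriction $r$ and the connecting map $\delta$ under the cup product pairing with self-dual coefficients; this is a standard avatar of Poincar\'e--Lefschetz duality, but one must be careful with signs and with the identification $\mathfrak{v}\cong\mathfrak{v}^*$ furnished by $B$. The remainder of the argument is essentially dimensional bookkeeping combined with the orthogonal splitting of Lemma~\ref{lem:invkilling}.
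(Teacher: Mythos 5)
Your argument is correct and follows essentially the same route as the paper: part (i) via the orthogonal module splitting $\mathfrak{v}=\mathfrak{v}^{\rho_u(\pi_1(\partial_i M))}\perp(\mathfrak{v}^{\rho_u(\pi_1(\partial_i M))})^\bot$ from Lemma~\ref{lem:invkilling}, killing the complement by duality plus $\chi(T^2)=0$, and part (ii) via the adjointness of $j^*$ and $\Delta$ under Poincar\'e--Lefschetz duality (the ``half lives, half dies'' argument), which the paper also invokes as standard. Your explicit dimension count $\dim H^1(\partial_i M;\mathfrak{v})=2$ at all $u$, including $u_i=0$, and the annihilator identity $\operatorname{ann}(\operatorname{Im} j^*)=\ker\Delta=\operatorname{Im} j^*$ are just slightly more detailed versions of the steps the paper leaves implicit.
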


\begin{proof}
To prove assertion (i), we use the decomposition of Lemma~\ref{lem:invkilling}:
$$
\mathfrak{v}= \big(\mathfrak{v}^{\rho_u(\pi_1(\partial_i M))}\big)^\bot\oplus \mathfrak{v}^{\rho_u(\pi_1(\partial_i M))},
$$
which is a direct sum of $\pi_1(\partial_i M) $--modules, and therefore it induces
a direct sum in cohomology. Since $\big(\mathfrak{v}^{\rho_u(\pi_1(\partial_i M))}\big)^\bot$ has no invariant subspaces, 
$$
H^0(\partial_i M, \big(\mathfrak{v}^{\rho_u(\pi_1(\partial_i M))}\big)^\bot)=0.
$$ 
In addition, the Killing form restricted 
to $     \big(\mathfrak{v}^{\rho_u(\pi_1(\partial_i M))}\big)^\bot     $ is non-degenerate, thus by duality and by vanishing of the Euler characteristic
$$
H^*(\partial_i M, \big(\mathfrak{v}^{\rho_u(\pi_1(\partial_i M))}\big)^\bot)=0.
$$
 Hence
$$
  H^*(\partial_i M; \mathfrak{v})= H^*(\partial_i M;  \mathfrak{v}^{\rho_u(\pi_1(\partial_i M))}   )\cong H^*(\partial_i M;\mathbf R)\otimes \mathfrak{v}^{\rho_u(\pi_1(\partial_i M))}. 
$$

The first statement of assertion~(ii) is just Corollary~\ref{cor:h1boundary}.
The fact that the image of the map $H^1(M; \mathfrak{v}_{\Ad\rho_{u}})\to H^1(\partial M; \mathfrak{v}_{\Ad\rho_{u}})$ is a Lagrangian subspace follows from duality.
We reproduce the proof for completeness (cf.\ \cite{HodgsonThesis}).
We are interested in the following part of the exact cohomology sequence of the pair $(M,\partial M)$:
\[ 
H^1(M;\mathfrak{v}_{\Ad\rho_{u}})  
\stackrel{j^*}{\longrightarrow}  
H^1(\partial M; \mathfrak{v}_{\Ad\rho_{u}}) \stackrel{\Delta}{\longrightarrow} 
H^2(M, \partial M; \mathfrak{v_{\Ad\rho_{u}}})\,. \]
The maps $j^*$ and $\Delta$ are dual to each other: 
for $z_1\in H^1(M;\mathfrak{v}_{\Ad\rho_{u}})$ and 
$z_2\in H^1(\partial M; \mathfrak{v}_{\Ad\rho_{u}})$,
\[
\langle j^*(z_1)\cup z_2, [\partial M]\rangle = 
\langle z_1 \cup \Delta (z_2),[M,\partial M] \rangle,
\]
where $[M,\partial M]\in H_3(M,\partial M;\mathbf R)$ and $[\partial M]\in H_2(\partial M;\mathbf R)$
denote the respective fundamental classes.

It follows that
$\dim \operatorname{Im}( j^*) = \frac12 \dim H^1(\partial M; \mathfrak{v}_{\Ad\rho_{u}}) =k$.
 Moreover 
$\Delta\circ j^* =0$ implies that  $\operatorname{Im}( j^*)$ is isotropic and hence Lagrangian since $\dim \operatorname{Im}( j^*) =k$.
\end{proof}

\begin{cor}
\label{cor:dim}
Let $M$ be a cusped manifold. 
Then for all $u\in U\subset \mathbf C^k$ we have 
$$\dim H^1(M; \mathfrak{v}_{\Ad\rho_{u}})\geq k\,. $$
Moreover, $M$ is infinitesimally projectively rigid iff $\dim H^1(M; \mathfrak{v}_{\Ad\rho_{0}})=  k$.
\end{cor}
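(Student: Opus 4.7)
The plan is to combine Lemma~\ref{lem:rankone}(ii) with the canonical splitting of Lie algebras from Section~\ref{sec:infinitesimaldefs}.

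For the inequality $\dim H^1(M;\mathfrak{v}_{\Ad\rho_u})\geq k$, I would simply note that by Lemma~\ref{lem:rankone}(ii), the image of the restriction map $j^*\co H^1(M;\mathfrak{v}_{\Ad\rho_u})\to H^1(\partial M;\mathfrak{v}_{\Ad\rho_u})$ is a Lagrangian subspace of $H^1(\partial M;\mathfrak{v}_{\Ad\rho_u})$, hence has dimension $k$. Thus the domain must have dimension at least $k$.

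For the equivalence characterizing infinitesimal projective rigidity at $u=0$, I would use the decomposition $\mathfrak{sl}(4)=\mathfrak{so}(3,1)\oplus\mathfrak{v}$ from Equation~(\ref{eqn:sl}), which is a splitting of $PSO(3,1)$-modules and hence induces a splitting in cohomology compatible with the restriction map $H^1(M;\cdot)\to H^1(\partial M;\cdot)$. By the definition of infinitesimal projective rigidity for cusped manifolds, $M$ is infinitesimally projectively rigid iff this restriction map is injective for $\mathfrak{sl}(4)_{\Ad\rho_0}$, which by the splitting is equivalent to injectivity for both summands $\mathfrak{so}(3,1)_{\Ad\rho_0}$ and $\mathfrak{v}_{\Ad\rho_0}$ separately. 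Proposition~\ref{subsection:so(3,1)}(i) already gives injectivity on the $\mathfrak{so}(3,1)$ summand (Garland rigidity), so rigidity reduces to injectivity of $j^*\co H^1(M;\mathfrak{v}_{\Ad\rho_0})\to H^1(\partial M;\mathfrak{v}_{\Ad\rho_0})$.

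To conclude, since the image of $j^*$ has dimension exactly $k$ by Lemma~\ref{lem:rankone}(ii), injectivity of $j^*$ is equivalent by the rank-nullity theorem to $\dim H^1(M;\mathfrak{v}_{\Ad\rho_0})=k$. This yields the desired equivalence. There is essentially no obstacle here: the corollary is a direct bookkeeping consequence of the Lagrangian dimension computation combined with the canonical module decomposition and the known rigidity statement for $\mathfrak{so}(3,1)$-coefficients.
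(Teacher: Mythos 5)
Your proposal is correct and follows essentially the same route as the paper, whose proof simply invokes Lemma~\ref{lem:rankone} together with the splitting $\mathfrak{sl}(4)=\mathfrak{so}(3,1)\oplus\mathfrak{v}$ of (\ref{eqn:sl}); you have merely spelled out the bookkeeping (Lagrangian image of dimension $k$, Garland rigidity on the $\mathfrak{so}(3,1)$ summand, rank--nullity). No gap to report.
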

\begin{proof} The proof follows directly from Lemma~\ref{lem:rankone} and from the decomposition of the $SO(3,1)$--module 
$\mathfrak{sl}(4)= \mathfrak{so}(3,1)\oplus \mathfrak{v}$
(see Equation~(\ref{eqn:sl})). \end{proof}

\subsection{Parabolic representations}

Let ${\lambda}$ and ${\mu}$ be two generators of $\mathbf Z^2$ and 
$$
\varrho\co\mathbf Z^2\to PSO(3,1)
$$ 
a representation into a parabolic group.
Up to conjugation we suppose that the boundary point $[v_-]$ is the fixed point of the parabolic group. 
Viewing the parabolic group as translations of $\mathbf R^2$,
$\varrho({\lambda})$ is a translation of vector $v_ {\lambda}$, and $\varrho({\mu})$
of vector $v_ {\mu}$. Assume that the representation has rank $2$, (i.e.\
$v_ {\lambda}$ and $v_ {\mu}$ are linearly independent).
Then:

\begin{lemma} 
\label{lem:angles}
If the angle $\varphi$ between $v_ {\lambda}$ and $v_ {\mu}$ is not in 
$\frac\pi3\mathbf Z$ then
the map induced by restrictions
$$
H^1(\mathbf Z^2;\mathfrak{v}_{\Ad\varrho}) \xrightarrow{i_{\lambda}^*\oplus i_{\mu}^*} H^1({\lambda};\mathfrak{v}_{\Ad\varrho})\oplus H^1({\mu};\mathfrak{v}_{\Ad\varrho})
$$
is injective. Moreover, $\operatorname{rank}(i_{\lambda}^*)=\operatorname{rank}(i_{\mu}^*)=1$.
\end{lemma}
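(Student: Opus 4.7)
The plan is to exploit the abelian image $\varrho(\mathbf{Z}^2)$ and the Killing-form duality on $\mathfrak{v}$. Write $L=\Ad_{\varrho(\lambda)}$ and $M=\Ad_{\varrho(\mu)}$, two commuting $\mathbf{R}$-linear automorphisms of $\mathfrak{v}$. In the standard cochain complex for $\mathbf{Z}^2=\langle\lambda,\mu\rangle$, a $1$-cocycle is a pair $(v_\lambda,v_\mu)\in\mathfrak{v}\oplus\mathfrak{v}$ satisfying $(L-\Id)v_\mu=(M-\Id)v_\lambda$, and a $1$-coboundary has the form $((L-\Id)w,(M-\Id)w)$. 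By Lemma~\ref{lem:invparabolic}(ii) one has $\dim\mathfrak{v}^{\varrho(\mathbf{Z}^2)}=1$; since the Killing form identifies $\mathfrak{v}\cong\mathfrak{v}^*$ as $\mathbf{Z}^2$-modules, Poincaré duality on $T^2$ yields $\dim H^2=\dim H^0=1$, and combined with $\chi(T^2)=0$ this gives $\dim H^1(\mathbf{Z}^2;\mathfrak{v})=2$.

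I first establish $\operatorname{rank}(i_\lambda^*)=\operatorname{rank}(i_\mu^*)=1$, with no constraint on $\varphi$. A cocycle lies in $\ker(i_\lambda^*)$ iff $v_\lambda\in(L-\Id)\mathfrak{v}$; subtracting a coboundary we may assume $v_\lambda=0$, and then the cocycle condition forces $v_\mu\in\mathfrak{v}^L$, with residual coboundary freedom $(M-\Id)w$ for $w\in\mathfrak{v}^L$. Hence $\ker(i_\lambda^*)\cong \mathfrak{v}^L/(M-\Id)\mathfrak{v}^L$. Since $L$ and $M$ commute, $M-\Id$ preserves $\mathfrak{v}^L$; its kernel there is $\mathfrak{v}^L\cap\mathfrak{v}^M=\mathfrak{v}^{\varrho(\mathbf{Z}^2)}$, one-dimensional, while $\dim\mathfrak{v}^L=3$ by Lemma~\ref{lem:invparabolic}(i), so $\dim(M-\Id)\mathfrak{v}^L=2$ and $\dim\ker(i_\lambda^*)=1$. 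Therefore $\operatorname{rank}(i_\lambda^*)=1$, and symmetrically for $i_\mu^*$.

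Injectivity of $i_\lambda^*\oplus i_\mu^*$ amounts to $\ker(i_\lambda^*)\cap\ker(i_\mu^*)=0$, which the same analysis identifies with $\bigl(\mathfrak{v}^L\cap(M-\Id)\mathfrak{v}\bigr)/(M-\Id)\mathfrak{v}^L$. Since $M\in PSO(3,1)$ is $B$-orthogonal, $(M-\Id)\mathfrak{v}=(\mathfrak{v}^M)^\bot$, so the joint kernel vanishes iff the Killing pairing $B\colon\mathfrak{v}^L\times\mathfrak{v}^M\to\mathbf{R}$ has rank $1$ rather than $0$. Passing to $\mathbf{i}\mathfrak{v}\subset\mathfrak{su}(3,1)$, both $\mathbf{i}\mathfrak{v}^L$ and $\mathbf{i}\mathfrak{v}^M$ sit inside the stabilizer subalgebra $\mathfrak{g}_-$ of the parabolic fixed point $[v_-]\in\partial\mathbf{H}^3_{\mathbf{C}}$, because the centralizer of a nontrivial parabolic element in $PSU(3,1)$ lies in the stabilizer of its unique boundary fixed point. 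By Lemma~\ref{lem:orthogonalparabolic} the radical of $B|_{\mathfrak{g}_-}$ is $\mathfrak{h}_-=\mathfrak{g}_{-1}\oplus\mathfrak{g}_{-2}$, so the Killing pairing on $\mathbf{i}\mathfrak{v}^L\times\mathbf{i}\mathfrak{v}^M$ factors through the one-dimensional $\mathfrak{g}_0$-components $\xi_\lambda,\xi_\mu$, namely the generators of the $U(1)$-rotations in the Levi subgroup centralizing the translations $v_\lambda$ and $v_\mu$ respectively.

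The final step is an explicit evaluation of $B(\xi_\lambda,\xi_\mu)$. After an $SO(2)\subset U(2)$ rotation we may take $v_\mu=(b,0)$ and $v_\lambda=(a\cos\varphi,a\sin\varphi)$; solving the commutator equation in $\mathfrak{g}_0$ determines $\xi_\mu=\mathbf{i}\,\diag(1,-3,1,1)$ up to scalar, and then $\xi_\lambda=\widetilde O(\varphi)\xi_\mu\widetilde O(\varphi)^{-1}$ where $\widetilde O(\varphi)=\diag(O(\varphi),I_2)$ with $O(\varphi)\in SO(2)$. A direct block-trace computation yields $B(\xi_\lambda,\xi_\mu)$ proportional to $1+2\cos(2\varphi)$, which vanishes exactly when $\varphi\in\pm\tfrac{\pi}{3}+\pi\mathbf{Z}$, i.e.\ precisely on $\tfrac{\pi}{3}\mathbf{Z}$ (excluding the values $0,\pi$ already ruled out by linear independence of $v_\lambda,v_\mu$). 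Under the hypothesis $\varphi\notin\tfrac{\pi}{3}\mathbf{Z}$ the Levi pairing is nonzero, so the joint kernel is trivial and the map is injective. I expect the main obstacle to be this last trace computation: pinning down the correct normalization of the Levi generators $\xi_\lambda,\xi_\mu$ and verifying that their Killing pairing degenerates exactly at the threshold angles requires a careful, though ultimately elementary, $4\times 4$ matrix calculation.
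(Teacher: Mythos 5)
Your argument is correct, but it follows a genuinely different route from the paper. The paper proves Lemma~\ref{lem:angles} by exhibiting two explicit cocycles $z_\lambda,z_\mu$ (derivatives of a one-parameter twisting of $\varrho$ by a $U(1)$ in the Levi factor), showing $i_\mu^*(z_\mu)$ and $i_\lambda^*(z_\lambda)$ are nonzero by cup product with the invariant elements $a_\mu,a_\lambda$, and concluding from $B(a_\lambda,a_\mu)=32(1+2\cos 2\varphi)\neq 0$; injectivity then follows from $\dim H^1(\mathbf Z^2;\mathfrak v)=2$. You instead work directly in the cochain complex of $\mathbf Z^2$: your identifications $\ker(i_\lambda^*)\cong\mathfrak v^L/(M-\Id)\mathfrak v^L$ and $\ker(i_\lambda^*)\cap\ker(i_\mu^*)\cong\bigl(\mathfrak v^L\cap(\mathfrak v^M)^\bot\bigr)/(M-\Id)\mathfrak v^L$ are correct (using Lemma~\ref{lem:invparabolic} for $\dim\mathfrak v^L=3$, $\dim\mathfrak v^{\varrho(\mathbf Z^2)}=1$, and $(M-\Id)\mathfrak v=(\mathfrak v^M)^\bot$ since $\Ad$ is $B$-orthogonal and $B|_{\mathfrak v}$ is nondegenerate), reducing injectivity to the Killing pairing $\mathfrak v^L\times\mathfrak v^M$ having rank $1$; the passage through Lemma~\ref{lem:orthogonalparabolic} to the one-dimensional $\mathfrak g_0$-components is legitimate because both centralizers lie in the same stabilizer $\mathfrak g_-$ (the whole parabolic group fixes $[v_-]$), and your final trace computation reproduces the paper's threshold $1+2\cos 2\varphi=0$, i.e.\ $\varphi\in\pm\frac\pi3+\pi\mathbf Z$. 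What the two approaches buy is different: yours is purely linear-algebraic, needs no explicit deformation, and actually proves $\operatorname{rank}(i_\lambda^*)=\operatorname{rank}(i_\mu^*)=1$ for every rank-two parabolic representation with the angle condition entering only as the exact nondegeneracy threshold of the Levi pairing; the paper's construction is longer but produces the explicit cocycles $z_\lambda,z_\mu$ and the matrices $a_\lambda,a_\mu$, which are reused later (the flexing-slope computations and the orbifold analysis in Section~\ref{sec:examples} quote them verbatim), so the explicitness is not incidental. Two small points you should tighten if writing this up: justify that the $\mathfrak g_0$-component of $\mathbf i\,\mathfrak v^L$ is exactly the line spanned by the $\mathfrak u(1)$-generator by appealing to the explicit $5$-dimensional centralizer computed in the proof of Lemma~\ref{lem:invparabolic}, and note that the normalization of $\xi_\lambda,\xi_\mu$ is irrelevant since only the vanishing locus of $B(\xi_\lambda,\xi_\mu)$ matters.
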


\begin{proof}
We follow the notation from Subsection~\ref{subsection:Heisenberg}.
We may assume that $v_ {\lambda}=(1,0)$, 
$v_ {\mu}=(a \cos\varphi, a\sin\varphi )\in\mathbf R^2$, $a\,\sin\varphi\neq 0$. 
In the Heisenberg model $\mathcal H_-$, 
$\varrho({\lambda})=(1,0,0)$ and $\varrho({\mu})=(a \cos\varphi, a\sin\varphi ,0)$.
For $\theta\in\mathbf R$, we define a representation $\varrho_{\theta}\co \mathbf Z\oplus\mathbf Z\to G_-$ by
$$
\varrho_{\theta}({\lambda})= \varrho({\lambda})
\quad\textrm{ and }\quad
\varrho_{\theta}({\mu})=
\begin{pmatrix} 1& 0 \\ 0 & e^{\mathbf i \theta}\end{pmatrix} 
 \varrho({\mu}) .  
$$
Notice that $\varrho_{\theta}({\lambda})$ and $  \varrho_{\theta}({\mu})$ commute, because 
 $\left(\begin{smallmatrix} 1& 0 \\ 0 & e^{\mathbf i \theta}\end{smallmatrix}\right)\in U(2)$ 
 fixes $(1,0)$.

Differentiating at $\theta=0$, we obtain an infinitesimal deformation i.e.\ a cocycle 
$d_\mu\co\mathbf Z^2\to \mathfrak g_- = \mathfrak g_0\oplus \mathfrak g_{-1}\oplus \mathfrak g_{-2}$ given by
\[ d_\mu(\gamma) = \frac{ d \varrho_{\theta}(\gamma) }{d\theta}\bigg|_{\theta=0} \varrho_{0}(\gamma)^{-1}\,.\]
The cocycle $d_\mu\co \mathbf Z^2 \to \mathfrak{g}_-$  is trivial when restricted to ${\lambda}$. More precisely we obtain
\[ d_\mu(\lambda) = 0 \quad \text{ and }\quad
d_\mu(\mu) = \begin{pmatrix} 0& 0 \\ 0 & \mathbf i \end{pmatrix} .  
\]


Notice that the derivative of the canonical embedding
$U(2)\to PSU(3,1)$ determined by
\[ A \mapsto \begin{pmatrix} A& 0  \\ 0 & I_2  \end{pmatrix} \]
is the map $\mathfrak u(2)\to \mathfrak {su}(3,1)$ given by
\[ a \mapsto \begin{pmatrix} a& 0  \\ 0 &  0 \end{pmatrix} - \frac{\tr a} 4 I_4\,\]
and that 
\[ \begin{pmatrix} 0& 0 \\ 0 & \mathbf i \end{pmatrix} \mapsto \frac{\mathbf i }4
\begin{pmatrix}
- 1 & & & \\ 
& 3 & & \\
& & -1 & \\
& & & -1
\end{pmatrix} \in \mathbf i \mathfrak v\,.
\]

Hence we obtain a  cocycle 
$z_\mu\co \mathbf Z^2  \to  \mathfrak{v}$ given by
$z_\mu(\lambda)= 0$ and  $z_\mu(\mu) = a_{\lambda}$ where
\[
a_\lambda := 
\begin{pmatrix}
- 1 & & & \\ 
& 3 & & \\
& & -1 & \\
& & & -1
\end{pmatrix} \in  \mathfrak v\,.
\]

In the same way we obtain a second cocycle 
$z_\lambda\co \mathbf Z^2  \to  \mathfrak{v}$ given by
$z_\lambda(\lambda)= a_\mu$ and  $z_\lambda(\mu) = 0$
where
\begin{equation*}
a_ {\mu}=\label{eqn:infbending2} 
\begin{pmatrix}
1-2\cos(2\varphi) & -2\sin(2\varphi) &  & & \\ 
-2\sin(2\varphi)& 1+2\cos(2\varphi)&  & & \\
& & -1 & \\
& & & -1
\end{pmatrix}\in\mathfrak v\,.
\end{equation*}
Here $\varphi$ is the angle between 
$v_{\mu}$ and $v_{\lambda}$.
Notice that $z_\lambda$ is constructed as $z_\mu$ but switching the roles of $\lambda$ and $\mu$.
Thus $a_{\mu}$  is invariant by $\varrho({\mu})$, and
it can be obtained by conjugating $a_{\lambda}$ by a rotation of angle $\varphi$. 

We claim that the cocycle $z_\mu$
is cohomologically nontrivial when restricted to $\mu$, i.e.  nontrivial in $H^1({\mu};\mathfrak{v}_{\Ad\varrho})$.
This  proves that $z_\mu$ is a nontrivial cocycle,
 and $\operatorname{rank}(i_{\mu}^*)\geq 1$. 
By symmetry of the generators, $z_\lambda$ is a nontrivial cocycle
and 
$\operatorname{rank}(i_{\lambda}^*)\geq 1$. Moreover, since
$i_{\mu}^*(z_\lambda)=0= i_{\lambda}^*(z_\mu)$ it follows that the image of $i_{\mu}^*\oplus i_{\lambda}^*$ is $2$--dimensional and the assertion of the lemma follows.

To prove the claim, we will use the cup product 
$$
H^1({\mu};\mathfrak{v}_{\Ad\varrho})\otimes H^0({\mu};\mathfrak{v}_{\Ad\varrho})\to H^1({\mu};\mathbf R)\cong \mathbf R
$$ 
associated to the Killing form defined in (\ref{eqn:cupproduct}).
Recall that $a_\mu\in H^0({\mu};\mathfrak{v}_{\Ad\varrho})= \mathfrak{v}^{\varrho({\mu})}$ is invariant under the action of $\mu$.
The cup product $i_{\mu}^* (z_\mu)\cup a_ {\mu}$ is a represented by the homomorphism $H_1(\mu; \mathbf R)\to\mathbf R$ given by
\begin{align*}
 \big(i_{\mu}^* (z_\mu)\cup a_ {\mu}\big)(\mu) &= 
 B(a_\lambda,a_ {\mu})= 8\tr (a_\lambda\cdot a_ {\mu})\\&=32(1+2\cos(2\varphi))
 =128\big(\cos^2(\varphi) - \frac 1 4\big)\,.
\end{align*}
This is nonzero by the hypothesis about the angle $\varphi$ between $v_ {\lambda}$ and $v_ {\mu}$,
hence $i_{\mu}^* (z_ \mu)\cup a_ {\mu}$ is not homologous to zero.
\end{proof}

\begin{remark}
\begin{enumerate}
\item Notice that in the proof of Lemma~\ref{lem:angles}, instead of the cup product
we could have considered the Kronecker paring between homology and cohomology,
and we would have ended up checking the non-vanishing of the same evaluation of the Killing 
form $B(a_ {\lambda},a_ {\mu})$.
\item Note that the assumption $\varphi\neq \pi/3$ is essential in 
Lemma~\ref{lem:angles}:
we can still construct the cocycles $z_{\mu}$ and $z_{\lambda}$ in the case $\varphi =\pi/3$. But now we have 
$i_{\mu}^{*}(z_{\mu}) = 0 = i_{\lambda}^{*}(z_{\lambda})$. Moreover the
two cocycles $z_{\mu}$ and $z_{\lambda}$ represent linear dependent nontrivial cohomology classes in $H^{1}(\mathbf Z; \mathfrak{v}_{\Ad\varrho})$.
Hence the map 
\[ 
i_{\lambda}^*\oplus i_{\mu}^*\co H^1(\mathbf Z^2;\mathfrak{v}_{\Ad\varrho}) \to H^1({\lambda};\mathfrak{v}_{\Ad\varrho})\oplus H^1({\mu};\mathfrak{v}_{\Ad\varrho})
\]
is not injective if $\varphi=\pi/3$.
\end{enumerate}
\end{remark}


\smallskip

Before the next lemma, we still need a claim about symplectic forms on vector spaces.

\begin{claim} 
\label{lem:cupproduct}
Let $(V,\omega)$ be a $2$--dimensional symplectic subspace.
Suppose that $f,g\co V\to\mathbf R$ are linear forms which form a basis of the dual space $V^*$, i.e.\ $f\oplus g\co V\to \mathbf R^2$ is an isomorphism.

Then there exists a constant $c\in\mathbf R$, $c\neq 0$, such that, 
for every $x,y\in V$
$$
\omega(x,y)= c (f(x) g(y) - g(x) f(y)) \,.
$$
\end{claim}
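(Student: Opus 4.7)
The plan is to use the fact that for a $2$-dimensional vector space $V$, the space $\Lambda^2 V^*$ of alternating bilinear forms on $V$ is one-dimensional. First I would observe that both $\omega$ and the form
\[
\omega'(x,y) := f(x) g(y) - g(x) f(y)
\]
are alternating bilinear forms on $V$, so both lie in $\Lambda^2 V^*$. Therefore they must be proportional; the content of the claim is only that the constant of proportionality exists and is nonzero.

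Next I would verify that neither form is zero. The form $\omega$ is nondegenerate by the hypothesis that $(V,\omega)$ is symplectic, hence $\omega\neq 0$. For $\omega'$, since $f,g$ form a basis of $V^*$, I can take the dual basis $\{e_1,e_2\}$ of $V$ characterized by $f(e_1)=g(e_2)=1$ and $f(e_2)=g(e_1)=0$; then $\omega'(e_1,e_2)=1$, so $\omega'\neq 0$.

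Finally, since $\dim \Lambda^2 V^* = 1$ and $\omega'\neq 0$ spans this space, there exists a unique scalar $c\in\mathbf R$ with $\omega = c\, \omega'$, and $c\neq 0$ because $\omega\neq 0$. There is no genuine obstacle here; the only minor care is in producing a witness for $\omega'\neq 0$, which the dual basis provides immediately.
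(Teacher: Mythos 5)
Your proof is correct and follows essentially the same route as the paper, which simply invokes the one-dimensionality of the space of antisymmetric bilinear forms on a $2$-dimensional space; you have merely spelled out the (straightforward) verifications that $\omega\neq 0$ and $\omega'\neq 0$, which the paper leaves implicit.
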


\begin{proof}
The claim is a consequence of the fact that the space of antisymmetric bilinear forms on $\mathbf R^2$ is one dimensional.
\end{proof}

\begin{lemma}
\label{lem:almosteveryslope}
If a subspace 
$L\subset H^1(\partial M; \mathfrak{v}_{\Ad \rho_0})$  is Lagrangian for the cup product, then
there exist simple closed curves $\mu_1\in \pi_1(\partial_1 M)$, \ldots,
$\mu_k\in \pi_1(\partial_k M)$ so that the image of $L$ injects in
$H^1(\mu_1;\mathfrak{v}_{\Ad \rho_0})\oplus \cdots \oplus H^1(\mu_k;\mathfrak{v}_{\Ad \rho_0}) $.
Moreover, injectivity fails if we consider only $k-1$ curves.
\end{lemma}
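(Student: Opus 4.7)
The plan is to exploit the symplectic direct-sum decomposition of $V := H^1(\partial M;\mathfrak{v}_{\Ad\rho_0})$ and construct the curves $\mu_i$ one at a time by induction on $k$. Set $V_i := H^1(\partial_i M;\mathfrak{v}_{\Ad\rho_0})$, so $V = \bigoplus_{i=1}^k V_i$, and write $\pi_i\co V\to V_i$ for the projection. By Lemma~\ref{lem:rankone} each $V_i$ is two-dimensional, and (\ref{eqn:cupsum}) shows that the cup product on $V$ is the orthogonal sum of the cup products on the $V_i$; each restricted form is non-degenerate by Poincar\'e duality (with $\mathfrak{v}$ self-dual via the Killing form) and antisymmetric, so each $V_i$ is a symplectic plane.

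I first note that $L_i := \pi_i(L) \neq 0$ for every $i$: otherwise $L$ would be an isotropic subspace of dimension $k$ in the symplectic subspace $\bigoplus_{j\neq i} V_j$ of dimension $2(k-1)$, exceeding the Lagrangian bound $k-1$. For each simple closed curve $\mu_i \subset \partial_i M$ set $K_i(\mu_i) := \ker\bigl(\mathit{res}_{\mu_i}\co V_i\to H^1(\mu_i;\mathfrak{v})\bigr)$. Applying Lemma~\ref{lem:angles} to a pair of generators $(\mu_i,\lambda_i)$ of $\pi_1(\partial_i M)$ whose translation vectors meet at an angle outside $\tfrac\pi3\mathbf Z$ gives $\operatorname{rank}(\mathit{res}_{\mu_i}) = 1$ (so $K_i(\mu_i)$ is a line in $V_i$) and moreover $K_i(\mu_i)\cap K_i(\lambda_i) = 0$; in particular at least two distinct lines of $\mathbf P(V_i)$ are realized as $K_i(\mu)$ for some simple closed curve $\mu$.

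The inductive construction proceeds as follows. Choose $\mu_1$ so that $L\cap K_1(\mu_1) = 0$: the obstruction is a single line of $V_1$, namely $L_1$ when $\dim L_1 = 1$, or $L\cap V_1$ when $\dim L_1 = 2$ (note $\dim(L\cap V_1)\leq 1$ since $V_1$ has no two-dimensional isotropic subspace). Two available choices for $K_1(\mu_1)$ let us avoid this line. Set $L' := L\cap \pi_1^{-1}(K_1(\mu_1))$: then $\dim L' = k - 1$, the projection $\pi_{\geq 2}|_{L'}\co L'\to \bigoplus_{i\geq 2} V_i$ is injective (as $L'\cap V_1 = 0$), and the image is Lagrangian in $\bigoplus_{i\geq 2} V_i$ because $K_1(\mu_1)$ is isotropic in $V_1$, so that $\omega_1(\pi_1 l,\pi_1 l')=0$ for every $l,l'\in L'$ and the ambient isotropy of $L'$ transfers to the image. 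The induction hypothesis applied to $\pi_{\geq 2}(L')$ yields $\mu_2,\ldots,\mu_k$ with $\pi_{\geq 2}(L')\cap \bigoplus_{i\geq 2} K_i(\mu_i) = 0$, from which chasing definitions gives the desired $L\cap \bigoplus_i K_i(\mu_i) = 0$, i.e.\ the injectivity of $L\to \bigoplus_i H^1(\mu_i;\mathfrak{v})$. The base case $k=1$ is just the first choice above.

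The last assertion is a dimension count: any restriction to only $k-1$ of the curves, missing say the $j_0$-th boundary, factors through the forgetful projection to $\bigoplus_{l\neq j_0} V_l$, so its kernel contains $V_{j_0}\oplus \bigoplus_{l\neq j_0} K_l(\mu_l)$ of dimension at least $k+1$; since $\dim L = k$ and $\dim V = 2k$, we get $\dim(L\cap \ker) \geq 1$ and injectivity fails. The main obstacle of the proof lies in the inductive step, namely producing at each stage a slope $\mu_i$ for which $K_i(\mu_i)$ differs from one prescribed line of $V_i$; this is exactly what the existence of at least two distinct achievable lines, coming from Lemma~\ref{lem:angles}, provides.
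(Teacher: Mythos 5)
Your argument is correct, and it shares the paper's overall strategy --- induction on the number $k$ of boundary tori, the orthogonal splitting of $H^1(\partial M;\mathfrak{v}_{\Ad\rho_0})$ into $2$-dimensional symplectic summands, and Lemma~\ref{lem:angles} providing, on each torus, two slopes whose restriction maps have rank one and independent kernels --- but your inductive step is organized differently. The paper selects the slope $\mu_k$ by the condition $i_{\mu_k}^*(L)\neq 0$, which makes the dimension count for $\ker(i_{\mu_k}^*\vert_L)$ immediate, and then needs the explicit identity of Claim~\ref{lem:cupproduct} to prove that the projection of this kernel onto the remaining factors is injective. You select the slope by the dual condition that the kernel line $K_1(\mu_1)$ is not contained in $L$ (available because $L\cap V_1$ is at most a line while Lemma~\ref{lem:angles} yields two distinct kernel lines); injectivity of the projection is then automatic and Claim~\ref{lem:cupproduct} is never used --- the whole step runs on the Lagrangian dimension bound and the fact that a line in a symplectic plane is isotropic. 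The only assertion you leave unjustified is $\dim L'=k-1$: a priori $\dim L'$ could equal $k$, namely if $\pi_1(L)\subset K_1(\mu_1)$, but this is excluded by the two facts you do verify, since injectivity of $\pi_{\geq 2}\vert_{L'}$ together with isotropy of its image inside the $2(k-1)$-dimensional symplectic space $\bigoplus_{i\geq 2}V_i$ forces $\dim L'\leq k-1$, while $L'$ is the kernel of the rank-$\leq 1$ functional $i_{\mu_1}^*\circ\pi_1$ restricted to $L$, which forces $\dim L'\geq k-1$; so this is a cosmetic rather than a genuine gap. A further small plus of your write-up: you prove the ``moreover'' clause explicitly by a kernel count (valid because each chosen slope has restriction of rank one), a point the paper leaves implicit.
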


\begin{proof} Along this proof, the action on $\mathfrak v$ is the adjoint of the holonomy of the complete structure, so $\Ad\rho_0$ is omitted from notation.
For $j=1,\ldots,k$, let $res_j\co H^1(\partial M; \mathfrak v)\to H^1(\partial_j M; \mathfrak v)$ denote the map induced by restriction, which is also the projection to the $j$th factor
of the isomorphism
$$
H^1(\partial M; \mathfrak v)\cong H^1(\partial_1 M; \mathfrak v)\perp\cdots\perp H^1(\partial_k M; \mathfrak v).
$$
Recall that this is an orthogonal sum for the cup product (\ref{eqn:cupsum}).

We prove the lemma by induction on $k$. When $k=1$, it suffices to chose two curves
$\mu_1$ and $\lambda_1$ in $\partial_1 M$ that satisfy the hypothesis of Lemma~\ref{lem:angles}.
Hence 
$$ i_{\mu_1}^*\oplus i_{\lambda_1}^*\co  
H^1(\partial_1M;\mathfrak{v})\to H^1(\mu_1;\mathfrak{v}) \oplus H^1(\lambda_1;\mathfrak{v})$$
is injective. 
Then for at least one of the curves, say $\mu_1$, $i_{\mu_1}^*(L)\neq 0$.

For the induction step, we chose the corresponding curves on the $k$--th component $\mu_k$ and $\lambda_k$, so that
$$ i_{\mu_k}^*\oplus i_{\lambda_k}^*\co  
H^1(\partial_kM;\mathfrak{v})\to H^1(\mu_k;\mathfrak{v}) \oplus H^1(\lambda_k;\mathfrak{v})$$
is injective, and assume that 
$i_{\mu_k}^*(L)\neq 0$.

Let $L'\subset H^1(\partial_1M;\mathfrak{v})\perp \cdots\perp H^1(\partial_{k-1} M;\mathfrak{v})$ be the projection 
to the first $k-1$ factors of the kernel of $i_{\mu_k}^*$ restricted to $L$; i.e.
$$
L'= (res_1\oplus\cdots\oplus res_{k-1})(\ker i_{\mu_k}^*\vert_L)
$$
We first check that $L'$ is  isotropic. Given $x,y\in L'$, there exist $x_k,y_k\in H^1(\partial_{k} M;\mathfrak{v})$ such that 
$(x,x_k)$, $(y,y_k)\in L$ and $i_{\mu_k}^*(x_k)=i_{\mu_k}^*(y_k)=0$. Thus, by Claim~\ref{lem:cupproduct} and Equation~(\ref{eqn:cupsum}):
$$
0=(x,x_k)\cup(y,y_k)=x\cup y + c_k (i_{\mu_k}^*(x_k) i_{\lambda_k}^*(y_k)-i_{\lambda_k}^*(x_k) i_{\mu_k}^*(y_k))=x\cup y.
$$
Finally we claim that the dimension of $L'$ is $k-1$. Since $\dim((\ker (i_{\mu_k}^*\vert_L))=k-1$, we need to check that 
$res_1\oplus\cdots\oplus res_{k-1}$ restricted to $\ker i_{\mu_k}^*\vert_L$ is injective. Let 
$x\in \ker(res_1)\cap\cdots \cap \ker (res_{k-1})\cap \ker (i_{\mu_k}^*\vert_L)$, we want to prove that $x=0$.
Notice that $x\in H^1(\partial_kM;\mathfrak v)\cap L\cap \ker (i^*_{\mu_k})$.
Choose $y\in L$ such that $i_{\mu_k}^*(y)\neq 0$, this is possible because $i_{\mu_k}^*(L)\neq 0$. Then, using $x\in H^1(\partial_ kM;\mathfrak v)$,
Claim~\ref{lem:cupproduct} and Equation~(\ref{eqn:cupsum}), we obtain
$$
0=x\cup y= c_k (i_{\mu_k}^*(x) i_{\lambda_k}^*(y)-i_{\lambda_k}^*(x) i_{\mu_k}^*(y))= -c_k i_{\lambda_k}^*(x) i_{\mu_k}^*(y)
$$
for some $c_k\neq 0$. Since $i_{\mu_k}^*(y)\neq 0$, $i_{\lambda_k}^*(x)=0$. Therefore $x=0$.

Finally, the fact that injectivity fails if we consider only $k-1$ curves is clear once we know that the rank
of  $ H^1(\partial M;\mathfrak{v}) \to H^1(\mu_i;\mathfrak{v})$ is at most one. Indeed Lemma~\ref{lem:angles}
tells that this rank is at most one.
\end{proof}

\section{The function on the deformation space}
\label{sec:function}

Recall that $M$ denotes a compact manifold with boundary a  union  of $k>0$ tori and hyperbolic interior.
The goal of this section is to give a sufficient cohomological condition which guarantees that infinitely many fillings on $M$ are infinitesimally rigid. 

For this we shall define a real analytic function $f\co U\to\mathbf R$, where $U\subset \mathbf C^k$ is a parametrization of Thurston's slice. This function
is defined as the determinant of a matrix whose entries are pairings between homology and cohomology classes. This $f$ will depend of several choices, but 
its zero set is a well defined analytical subset of $U$. Another analytical subset of $U$ is defined by means of the dimension cohomology of $M$ with
twisted coefficients.  We shall prove that the Dehn fillings whose parameter $u$ is away from these analytical subsets are infinitesimally rigid. 
Moreover, when $M$ is infinitesimally projectively rigid with respect to the cusps, these subsets are proper.

For this we need several tools  for constructing a function on the deformation space. The first one is given by the following lemma.
All statements are up to taking a smaller neighborhood of $0$, $U\subset \mathbf C^k$.

\begin{lemma}
\label{lem:cohomologyclasses}
As in Section~\ref{sec:Thurstonsslice}, let  $U\subset \mathbf C^k$  be an open neighborhood of $0$ which parametrizes the deformations of the complete holonomy of the interior of $M$.

\begin{enumerate}
\item There exists a non-vanishing element 
$a_u^i\in \mathfrak{v}^{\rho_u(\pi_1(\partial_i M))}$ that varies analytically in $u\in U$.
\item There exists a family of cohomology classes 
$\{z_u^1,\ldots,z_u^k\}$ that define a basis for the image of 
$H^1(M; \mathfrak{v}_{\Ad\rho_{u}})\to H^1(\partial M; \mathfrak{v}_{\Ad\rho_{u}})$ 
and that varies analytically in $u\in U$.
\end{enumerate}
\end{lemma}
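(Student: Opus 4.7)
I would prove the two parts independently.

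For part (1), I realize $\mathfrak v^{\rho_u(\pi_1(\partial_i M))}$ as the kernel of the real-analytic family of linear operators
\[T_u^i\co\mathfrak v\longrightarrow\mathfrak v\oplus\mathfrak v,\qquad v\longmapsto\bigl((\Ad_{\rho_u(\mu_i)}-\1)v,\,(\Ad_{\rho_u(\lambda_i)}-\1)v\bigr).\]
By Lemma~\ref{lem:invkilling}(ii) this kernel has constant dimension one for every $u\in U$, so $\{\ker T_u^i\}_{u\in U}$ is a real-analytic line sub-bundle of the trivial bundle $U\times\mathfrak v$ (a local analytic frame being given, for instance, by Cramer formulas on a minor of $T_u^i$ of maximal rank). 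After shrinking $U$ to a contractible neighborhood of $0$, this line bundle trivializes and produces the nowhere-vanishing analytic section $a_u^i$.

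For part (2), the first step is to assemble $H^1(\partial M;\mathfrak v_{\Ad\rho_u})$ into an analytic vector bundle of rank $2k$ over $U$. On each torus $\partial_iM=\langle\mu_i,\lambda_i\rangle$ the cocycle space $Z^1(\partial_i M;\mathfrak v_{\Ad\rho_u})\subset\mathfrak v\oplus\mathfrak v$ is cut out by the single analytic linear equation arising from the commutator relation, while the coboundary space $B^1(\partial_i M;\mathfrak v_{\Ad\rho_u})$ is the image of the operator $T_u^i$ of part~(1). By part~(1) together with Lemma~\ref{lem:rankone}(ii) (and the Euler-characteristic identity for the torus), both have constant dimensions, respectively $10$ and $8$, so $H^1(\partial_i M;\mathfrak v_{\Ad\rho_u})=Z^1/B^1$ is the fiber of a real-analytic rank-two bundle, and summing over cusps yields a rank-$2k$ analytic vector bundle $\mathcal H$ over $U$.

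The second step constructs the classes $z_u^j$. Fix a finite presentation $\pi_1(M)=\langle g_1,\ldots,g_n\mid r_1,\ldots,r_m\rangle$; then $Z^1(\pi_1(M);\mathfrak v_{\Ad\rho_u})$ is the kernel of an analytic map $\Phi_u\co\mathfrak v^n\to\mathfrak v^m$, and restriction to peripheral generators composed with the quotient to $\mathcal H$ yields an analytic map $\psi_u\co\ker\Phi_u\to\mathcal H_u$ with image $I_u:=\mathrm{Im}(r_u)$, of constant rank $k$ by Lemma~\ref{lem:rankone}(ii). I pick at $u=0$ cocycles $\tilde z_0^1,\ldots,\tilde z_0^k\in\ker\Phi_0$ whose $\psi_0$-images span $I_0$, analytically extend them to $\tilde z_u^j\in\ker\Phi_u$ for $u$ near $0$, and set $z_u^j:=[\psi_u(\tilde z_u^j)]\in\mathcal H_u$; these remain linearly independent by continuity, and since $\dim I_u=k$ they form a basis of $I_u$.

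The main obstacle lies in this last analytic extension, because $\ker\Phi_u$ has only upper-semi-continuous dimension and need not be a real-analytic sub-bundle of $\mathfrak v^n$, so a generic cocycle at $u=0$ will not deform analytically. One circumvents this by selecting the $\tilde z_0^j$ so that their $\psi_0$-classes span the constant-rank image $I_0$, and then applying an implicit-function argument on a maximal-rank minor of $\Phi_u$: the constancy of $\dim I_u$ keeps the relevant submatrix of constant rank in a neighborhood of $0$, so a Cramer-type analytic inversion produces the desired analytic cocycles $\tilde z_u^j\in\ker\Phi_u$.
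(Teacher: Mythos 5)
Part (1) of your proposal is fine and is essentially the paper's argument (constant one-dimensionality of the invariant subspace from Lemma~\ref{lem:invkilling}(ii), hence an analytic line bundle with a nonvanishing analytic section).

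In part (2) there is a genuine gap, and it sits exactly at the step you flag as ``the main obstacle''. Your fix claims that the constancy of $\dim I_u=k$ forces some maximal-rank submatrix of $\Phi_u$ to have constant rank near $0$, so that Cramer's rule produces analytic families $\tilde z_u^j\in\ker\Phi_u$. That implication is false: the constancy of the dimension of the image \emph{downstairs} in $H^1(\partial M;\mathfrak v_{\Ad\rho_u})$ says nothing about the rank of $\Phi_u$ itself. The rank of $\Phi_u$ equals $\dim\mathfrak v^n-\dim Z^1(\pi_1 M;\mathfrak v_{\Ad\rho_u})$, and $\dim Z^1=\dim B^1+\dim H^1(M;\mathfrak v_{\Ad\rho_u})$ is only upper semicontinuous; the lemma is used precisely in situations where $u=0$ may be a jump point (it does not assume $M$ is infinitesimally projectively rigid, e.g.\ in the proof of Theorem~\ref{thm:fromonetomany}), so $\operatorname{rk}\Phi_u$ can strictly increase for $u\neq 0$. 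In that case, solving only the equations of a maximal-rank minor of $\Phi_0$ by Cramer yields analytic families satisfying that subsystem but \emph{not} the full cocycle condition $\Phi_u\tilde z_u^j=0$ for $u\neq 0$; your construction then does not produce cocycles, and no choice of the initial $\tilde z_0^j$ spanning $I_0$ repairs this, since the failure is in the domain $\ker\Phi_u$, not in the images.

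The paper avoids the lifting problem altogether: it never deforms cocycles on $\pi_1(M)$, but works inside $H^1(\partial M;\mathfrak v_{\Ad\rho_u})$, which (as you correctly set up) is an honest analytic bundle of rank $2k$ because $Z^1$ and $B^1$ of the boundary tori have constant dimension. By exactness of the sequence of the pair, the image of $H^1(M;\mathfrak v_{\Ad\rho_u})\to H^1(\partial M;\mathfrak v_{\Ad\rho_u})$ is the kernel of the connecting map to $H^2(M,\partial M;\mathfrak v_{\Ad\rho_u})$, and by Lemma~\ref{lem:rankone}(ii) this kernel has constant dimension $k$; the paper then extends a basis chosen at $u=0$ analytically within this constant-dimensional family. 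If you want to keep your framework, you should replace your lifting step by this characterization of the image as a kernel (or otherwise impose the boundary conditions only modulo $B^1(\partial M;\mathfrak v_{\Ad\rho_u})$), rather than trying to deform global cocycles through a locus where $\dim H^1(M;\mathfrak v_{\Ad\rho_u})$ may jump.
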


\begin{remark}
To vary analytically depends on the construction we take for cohomology, but we always think of an analytic map on a finite dimensional space of cocycles,
either in simplicial cohomology (fixing a triangulation and varying the bundle) or in group cohomology (fixing a generating set for the fundamental group). 
\end{remark}

\begin{proof}
The first assertion follows directly from Lemma~\ref{lem:invkilling} (ii).

For the second part we will use Lemma~\ref{lem:rankone} (ii).
The rank of $H^1(M; \mathfrak{v}_{\Ad\rho_{u}})\to H^1(\partial M; \mathfrak{v}_{\Ad\rho_{u}})$ is $k$. Hence it suffices to take a basis when $u=0$,
$\{ z_0^1,\ldots , z_0^k\}$ and then make it vary
in the kernel of $H^1(\partial M; \mathfrak{v}_{\Ad\rho_{u}})\to H^2(M,\partial M; \mathfrak{v}_{\Ad\rho_{u}})$, which is an analytic family of $k$--dimensional vector spaces.
\end{proof}

For $i=1,\ldots,k$ we consider the following $1$--cycle in the $i$th torus $\partial_iM$ of the boundary
$$
a_u^i\otimes \frac1{\vert p_i+q_i\tau_i\vert ^2}(p_i\mu_i+q_i\lambda_i)
$$
in simplicial homology.
This twisted cycle is the image of the untwisted cycle
$$\frac{p_i\mu_i+q_i\lambda_i}{ \vert p_i+q_i\tau_i\vert ^2}\in
H_1(\partial_i M, \mathbf R)$$
by the natural map 
$$
H_1(\partial_i M, \mathbf R) 
\xrightarrow{a^i_{u}\otimes\cdot} 
H_1(\partial_i M, \mathfrak{v}^{\rho_u(\pi_1(\partial_i M))})
\to H_1(\partial_i M, \mathfrak{v}_{\Ad\rho_u})$$ that consists in tensorizing by $a_u^i$ and composing with the map induced by the inclusion
of coefficients $\mathfrak{v}^{\rho_u(\pi_1(\partial_i M))}\to \mathfrak{v}$.

Let $\langle . \,, .\rangle$ denote the Kronecker pairing between homology and cohomology of $\partial M$ with coefficients in $\mathfrak{v}_{\Ad\rho_u}$
(see Subsection~\ref{subsection:Kronecker}).  We define
$$
f(u)= \det \bigg( \big(\langle z_u^i, a_u^j\otimes 
\frac{p_j\mu_j+q_j\lambda_j}{\vert p_j+q_j\tau_j\vert ^2}\rangle\big)_{ij}\bigg)
$$
where $p_i$ and $q_i$ are the generalized Dehn filling coefficients corresponding to $u\in U$ (see Section~\ref{sec:Thurstonsslice}).
If we view $z_u$ as a map on simplicial chains taking values on $\mathfrak{v}$, and $B$ denotes the Killing form, then
$$
f(u)= \det \bigg( B\big( z_u^i( 
\frac{p_j\mu_j+q_j\lambda_j}{\vert p_j+q_j\tau_j\vert ^2} ), a_u^j\big)\bigg).
$$
\begin{remark}
The function $f$ depends on several non-canonical choices. But we are only interested in the zero locus of $f$ and this set does not depend on the different choices involved in the definition of $f$.
Notice also that Lemma~\ref{lem:analytic} implies that $f$ is analytic and $f(0)=0$. {The proof of Proposition~\ref{prop:nonrigid} 
in Section~\ref{sec:orbi}  shows that the zero locus $f^{-1}(\{0\})$ of $f$ might be one dimensional and that in general $0\in f^{-1}(\{0\})$ is not an isolated point (see Section~\ref{sec:orbi}).}
\end{remark}

In the sequel let $u_{(\mathbf p, \mathbf q)}$ denote the parameter of the structure whose completion gives the Dehn filling with coefficients $(p_1,q_1),\ldots,(p_k,q_k)$ where $(p_i,q_i)$ are pairs of coprime  integers.

\begin{lemma}
\label{lemma:fnonzero}
If \begin{enumerate}[(i)]
\item $f(u_{(\mathbf p, \mathbf q)})\neq 0$ and
\item$\dim H^1(M, \mathfrak{v}_{\Ad\rho_{u_{(\mathbf p, \mathbf q)}}})=k$,
\end{enumerate}
 then $H^1(M_{(\mathbf p, \mathbf q)},\mathfrak{v}_{\Ad\rho_{u_{(\mathbf p, \mathbf q)}}})=0$.
\end{lemma}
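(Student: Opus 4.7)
My plan is to apply the twisted Mayer--Vietoris sequence to the decomposition
\[
M_{(\mathbf p, \mathbf q)} = M \cup_{\partial M} (V_1 \sqcup \cdots \sqcup V_k),
\]
where each $V_i$ is a solid torus whose meridian is identified with the slope $m_i = p_i\mu_i + q_i\lambda_i$. Since $\rho_{u_{(\mathbf p,\mathbf q)}}(m_i) = I$ by Theorem~\ref{thm:thurston}~(iv), the representation descends to $\pi_1(M_{(\mathbf p,\mathbf q)})$, so coefficients in $\mathfrak{v}$ are well defined on the whole sequence. I would isolate the fragment
\[
H^0(\partial M;\mathfrak{v}) \xrightarrow{\delta} H^1(M_{(\mathbf p,\mathbf q)};\mathfrak{v}) \to H^1(M;\mathfrak{v}) \oplus H^1(\sqcup V_i;\mathfrak{v}) \xrightarrow{\psi} H^1(\partial M;\mathfrak{v})
\]
and deduce the lemma from $\delta = 0$ together with the injectivity of $\psi$.

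For the degree-zero piece, the complete hyperbolic holonomy is Zariski dense in $PSO(3,1)$, which acts irreducibly on $\mathfrak{v}$, so $H^0(M;\mathfrak{v}) = H^0(M_{(\mathbf p,\mathbf q)};\mathfrak{v}) = 0$. Because $\rho(m_i)=I$, invariance under $\pi_1(\partial_i M)$ reduces to invariance under the core $c_i$ of $V_i$, so $\mathfrak{v}^{\rho(\pi_1(\partial_iM))} = \mathfrak{v}^{\rho(c_i)}$, which is one-dimensional by Lemma~\ref{lem:invkilling}~(ii). Hence $H^0(\sqcup V_i;\mathfrak{v}) \to H^0(\partial M;\mathfrak{v})$ is an isomorphism of $k$-dimensional spaces, and exactness forces $\delta = 0$.

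For injectivity of $\psi$ I would identify the images $L = \operatorname{Im}(H^1(M;\mathfrak{v}) \to H^1(\partial M;\mathfrak{v}))$ and $L^V = \operatorname{Im}(H^1(\sqcup V_i;\mathfrak{v}) \to H^1(\partial M;\mathfrak{v}))$. By Lemma~\ref{lem:rankone}~(ii), $L$ is a $k$-dimensional Lagrangian; hypothesis (ii) that $\dim H^1(M;\mathfrak{v}) = k$ then forces $H^1(M;\mathfrak{v})\to L$ to be an isomorphism. Circle cohomology with local coefficients gives $\dim H^1(V_i;\mathfrak{v}) = 1$ and shows that $H^1(V_i;\mathfrak{v}) \hookrightarrow H^1(\partial_i M;\mathfrak{v})$ is injective, with image $L_i^V$ the line of classes representable by cocycles vanishing on $m_i$. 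Under the Kronecker pairing~(\ref{eqn:Kronecker}), each $L_i^V$ annihilates every twisted $1$-cycle $a_u^j \otimes m_j$: trivially for $j \neq i$ (supports in different tori), and for $j = i$ because a representative cocycle is zero on $m_i$. A dimension count then shows that $L^V := \bigoplus_i L_i^V$ is exactly the annihilator of $\operatorname{span}\{a_u^j\otimes m_j\}_{j=1}^k$.

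Consequently $L \cap L^V = 0$ if and only if $L$ pairs nondegenerately with $\operatorname{span}\{a_u^j\otimes m_j\}$, equivalently the $k\times k$ matrix $\bigl(\langle z_u^i,\, a_u^j \otimes (p_j\mu_j + q_j\lambda_j)\rangle\bigr)_{i,j}$ is nonsingular. Up to the nonzero scalar $\prod_j |p_j + q_j\tau_j|^{-2}$, its determinant is $f(u_{(\mathbf p,\mathbf q)})$, so hypothesis (i) delivers $L \cap L^V = 0$. Thus any $(z,w) \in \ker \psi$ has $\mathit{res}_M(z) \in L \cap L^V = 0$, forcing $z=0$ by the injectivity of $H^1(M;\mathfrak{v})\to L$, and then $w=0$ by injectivity of the restriction from $H^1(V_i;\mathfrak{v})$. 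The main obstacle I anticipate is the clean identification of $L^V$ with the annihilator of the twisted cycles under the Kronecker pairing; once that is in hand, everything else is standard Mayer--Vietoris bookkeeping.
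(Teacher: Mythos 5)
Your proof is correct and follows essentially the same route as the paper: the same Mayer--Vietoris decomposition of $M_{(\mathbf p,\mathbf q)}$ into $M$ and the filling solid tori, with hypothesis (ii) plus Lemma~\ref{lem:rankone} giving injectivity of $H^1(M;\mathfrak{v})\to H^1(\partial M;\mathfrak{v})$, and hypothesis (i) giving transversality of that image with the image coming from the solid tori. The only difference is bookkeeping: you detect this transversality via the Kronecker-pairing annihilator of the cycles $a_u^j\otimes m_j$, whereas the paper uses the splitting $H^1(\partial_i M;\mathbf R)\cong H^1(N_i;\mathbf R)\oplus H^1(\gamma_i;\mathbf R)$ and the isomorphism onto $\bigoplus_i H^1(\gamma_i;\mathbf R)$ --- these are dual formulations of the same argument.
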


\begin{proof}
In this proof the representation $\rho_{u_{(\mathbf p, \mathbf q)}}$ is fixed and we remove $\Ad\rho_{u}$ from notation.

Hypothesis (i) and (ii) imply that 
$$
\{a_u^1\otimes (p_1\mu_1+ q_1\lambda_1),\ldots ,a_u^k\otimes (p_k\mu_k+ q_k\lambda_k)\}
$$ is a basis for 
$H_1(M;\mathfrak{v})$. 
Hence for  $\gamma :=\gamma_1\cup\cdots\cup\gamma_k$, $\gamma_i=p_i\mu_i+q_i\lambda_i$,  
the following composition gives an 
isomorphism in homology:
\begin{equation*}
\bigoplus_{i=1}^k H_1(\gamma_i;\mathbf R)\to \bigoplus_{i=1}^kH_1(\gamma_i; \mathfrak{v}^{\rho_u(\pi_1(\partial_i M))})\to H_1(\gamma; \mathfrak{v})\to H_1(M;\mathfrak{v}).
\end{equation*}
Equivalently, we have an isomorphism in cohomology:
\begin{equation}
\label{ref:eqniso}
H^1(M;\mathfrak{v})
\to 
H^1(\gamma; \mathfrak{v})\to
\bigoplus_{i=1}^k
H^1(\gamma_i; \mathfrak{v}^{\rho_u(\pi_1(\partial_i M))})  
\to
\bigoplus_{i=1}^k
H^1(\gamma_i;\mathbf R). 
\end{equation}

Let $N$ denote a tubular neighborhood of the filling geodesics, so that $N=N_1\cup \cdots \cup N_k$ is the union of $k$ solid tori, $N\cup M$ is the closed manifold
$M_{(\mathbf p, \mathbf q)}$
and $N\cap M=\partial M$. We claim that the inclusions induce an isomorphism 
$$
H^i(M;\mathfrak{v})\oplus H^i(N; \mathfrak{v})\to H^i(\partial M; \mathfrak{v}) 
$$
for $i=0$ and $i=1$. 
Then by
Mayer-Vietoris,
$H^1(M_{(\mathbf p, \mathbf q)},\mathfrak{v})=0$ follows.

Let us check the claim. When $i=0$, 
$H^0(M;\mathfrak{v})\cong \mathfrak{v}^{Ad\rho_u(\pi_1M)}=0$, 
and the required isomorphism comes from the fact that 
$\pi_1(N_j)$ and $\pi_1(\partial_j M)$ have the same image under $\rho_u$ and hence the same invariant subspace.

When $i=1$, we notice that by Lemma~\ref{lem:rankone} 
$$H^1(\partial_i M,\mathfrak{v})=H^1(\partial_i M,\mathbf R)\otimes \mathfrak{v}^{\rho_u(\pi_1(\partial_i M))},$$ 
and $\dim \mathfrak{v}^{\rho_u(\pi_1(\partial_i M))}=1$,
by Lemma~\ref{lem:invkilling}. 
Similarly, 
$$
H^1(N_i,\mathfrak{v})=H^1(N_i,\mathbf R)\otimes \mathfrak{v}^{\rho_u(\pi_1(\partial_i M))}.
$$ 
Then the proof follows from isomorphism (\ref{ref:eqniso}) and the natural isomorphism induced by inclusions:
$$
H^1(\partial_ i M;\mathbf R)\cong H^1(N_i;\mathbf R)\oplus H^1(\gamma_ i;\mathbf R).
$$
\end{proof}

%

\begin{cor}
\label{cor:giverigidity}
If the generic dimension of $H^1(M;\mathfrak{v}_{\Ad\rho_u})$ is $k$ and if $f$ is non-constant in a neighborhood of $0$, then infinitely many Dehn fillings on $M$  are infinitesimally rigid. 
\end{cor}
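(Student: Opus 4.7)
The plan is to exhibit a proper real analytic ``bad'' subset $Z$ of a neighborhood of $0$ in $U$ whose complement contains infinitely many hyperbolic Dehn filling parameters, and to check that each such parameter produces an infinitesimally projectively rigid closed manifold.

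First I construct $Z$. Since the ranks of the coboundary matrices of a chain complex computing $H^1(M;\mathfrak{v}_{\Ad\rho_u})$ vary lower semicontinuously in the analytic family, $\dim H^1$ is upper semicontinuous in $u$, and the hypothesis that its generic value is $k$ forces $Z_2 := \{u\in U : \dim H^1(M;\mathfrak{v}_{\Ad\rho_u}) > k\}$ to be a proper real analytic subset of $U$. The hypothesis that $f$ is non-constant near $0$ makes $Z_1 := f^{-1}(0)$ a proper real analytic subset of some neighborhood $V$ of $0 \in U$. Thus $Z := Z_1 \cup (Z_2 \cap V)$ is closed with empty interior in $V$.

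Second, I claim that the hyperbolic Dehn filling parameters cluster densely at $0 \in U$. By Thurston's Theorem~\ref{thm:thurston} combined with Lemma~\ref{lem:analytic}, the map sending $u$ to $\big((p_i,q_i)/|p_i+q_i\tau_i(u)|^2\big)_{i=1}^k$ is a real analytic diffeomorphism from a neighborhood of $0 \in U$ onto a neighborhood of $0 \in (\mathbf R^2)^k$. Under its inverse, coprime integer pairs $(p_i,q_i)$ with large Euclidean norm are sent to points near $0$ in each factor whose directions fill up $\mathbf R \mathbf P^1$ densely (rational slopes being dense). Therefore the set of hyperbolic Dehn filling parameters is dense in a neighborhood of $0$ in $U$.

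Third, density together with $Z$ being closed and nowhere dense in $V$ forces infinitely many Dehn filling parameters to avoid $Z$: if only finitely many did, the remaining Dehn fillings (all lying in $Z$) would still be dense near $0$, and the closedness of $Z$ would force $Z$ to contain a neighborhood of $0$, contradicting empty interior. For each such parameter $u_{(\mathbf p, \mathbf q)} \notin Z$, both hypotheses of Lemma~\ref{lemma:fnonzero} are met, giving $H^1(M_{(\mathbf p, \mathbf q)};\mathfrak{v}_{\Ad\rho})=0$. Since $M_{(\mathbf p, \mathbf q)}$ is closed hyperbolic for such large fillings, Weil's infinitesimal rigidity yields $H^1(M_{(\mathbf p, \mathbf q)};\mathfrak{so}(3,1)_{\Ad\rho})=0$, and the orthogonal splitting~(\ref{eqn:sl}) combines the two to give $H^1(M_{(\mathbf p, \mathbf q)};\mathfrak{sl}(4)_{\Ad\rho})=0$, i.e.\ infinitesimal projective rigidity.

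The main obstacle I anticipate is the density statement in the second step: making precise that Thurston's diffeomorphism sends coprime integer points at infinity to a dense subset of a punctured neighborhood of $0$. Once this is established, the remainder is a clean Baire-type argument combined with the lemmas already available in this section.
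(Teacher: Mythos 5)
Your steps (1) and (4) are in line with the paper: the bad set is the union of $\{f=0\}$ and the locus where $\dim H^1(M;\mathfrak{v}_{\Ad\rho_u})$ jumps above $k$, and once a Dehn filling parameter avoids it, Lemma~\ref{lemma:fnonzero} together with the splitting~(\ref{eqn:sl}) and Weil rigidity for the closed filled manifold gives infinitesimal projective rigidity. The gap is exactly where you anticipated it, and it is not repairable in the form you propose: the hyperbolic Dehn filling parameters are \emph{not} dense in any neighborhood of $0$ in $U$. Thurston's map $u\mapsto(p_i,q_i)_i$ is a homeomorphism onto a neighborhood of $(\infty,\dots,\infty)$, and the coprime integer pairs form a discrete subset there; equivalently, $u_{(\mathbf p,\mathbf q)}\approx \bigl(2\pi\mathbf i/(p_i+q_i\tau_i(0))\bigr)_i$, and the points $p_i+q_i\tau_i(0)$ lie in the discrete lattice $\mathbf Z+\mathbf Z\tau_i(0)$, so the parameters $u_{(\mathbf p,\mathbf q)}$ form a countable set that is discrete away from the locus $\{u_i=0\}$ and accumulates only there. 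What is dense is the set of \emph{directions} of approach to $0$, not the set of points. Consequently your Baire-type argument in step (3) collapses: a closed, nowhere dense analytic subset $Z$ can perfectly well contain all but finitely many points of such a discrete sequence — for instance an analytic arc through $0$ — so ``only finitely many parameters escape $Z$'' does not force $Z$ to have interior. This is not a hypothetical worry: the paper's remark after the definition of $f$ and Proposition~\ref{prop:nonrigid} show that $f^{-1}(0)$ can be a curve through the origin passing through infinitely many generalized Dehn filling coefficients.

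The paper closes this gap by invoking \cite[Lemme~4.4]{Porti}, which is precisely the statement that the zero locus of a real analytic function that is not identically zero near $0$ misses infinitely many Dehn filling parameters; its proof uses the asymptotics $u_{(\mathbf p,\mathbf q)}\sim 2\pi\mathbf i/(p_i+q_i\tau_i(0))$ and the local structure of real analytic zero sets (finitely many branches with finitely many tangent directions, against which the dense set of slopes $p_i/q_i$ can be played off), not mere nowhere-density. So to complete your argument you must either cite that lemma, as the paper does, or reprove it; replacing it by density of the filling parameters does not work.
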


\begin{proof}
The dimension of $H^1(M;\mathfrak{v}_{\Ad\rho_u})$ is bounded below by $k$ and upper semi-continuous on $u\in U$
(it is larger on a proper analytic subset). 
Hence the set of $u\in U$ where $\dim H^1(M;\mathfrak{v}_{\Ad\rho_u})\neq k$ or $f(u)=0$ is a proper analytic subset of $U$, 
and it misses infinitely many Dehn fillings by 
\cite[Lemme~4.4]{Porti}.
\end{proof}

\begin{cor}
\label{cor:cohmologydimU}
Assume that $M$ is infinitesimally projectively rigid relative to the cusps. Then 
for $u\in U$, the  dimension of $H^i(M;\mathfrak{v}_{\Ad\rho_u})$ is $k$ for $i=1,2$ and zero otherwise. 
\end{cor}

\begin{proof}
The dimension of $H^1(M;\mathfrak{v}_{\Ad\rho_u})$ is bounded below by $k$ and upper semi-continuous on $U$,
hence constant, because this dimension is reached at $u=0$. As $\rho_u$ is irreducible, $\mathfrak{v}_{\Ad\rho_u}$ has no 
non-trivial invariant element and therefore $H^0(M;\mathfrak{v}_{\Ad\rho_u})=0$. 
As the  Euler characteristic of $M$ vanishes,
and $M$ has the homotopy type of a two dimensional complex,  the 
dimension  of $H^i(M;\mathfrak{v}_{\Ad\rho_u})$ is also $k$ of $i=2$ and zero otherwise.
\end{proof}

For a collection of simple closed curves $\mu=\{\mu_1,\ldots \mu_k\}$, where $\mu_i\subset\partial_i M$ is non trivial in homology, 
the structure with cone angles $\alpha$ and meridians $\mu$ has parameter $u=(\alpha\mathbf i,\ldots,\alpha\mathbf i)\in U$, where $\mathbf i=\sqrt{-1}$. 
To simplify notation, we shall write $u=\alpha\mathbf i$, in particular the holonomy is written as
$\rho_{\alpha \mathbf i}$.

\begin{prop}
\label{prop:conerigid}
Assume that there exists a collection of simple closed curves as above 
$\mu\subset \partial M$ and some $\varepsilon>0$ so that, $\forall 0<\alpha<\varepsilon$,
$$
\dim H^1(M, \mu; \mathfrak{v}_{Ad \rho_{\alpha \mathbf i }})=3 k.
$$
Then infinitely many Dehn fillings are infinitesimally rigid. 
\end{prop}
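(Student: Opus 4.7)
My plan is to verify the two hypotheses of Corollary~\ref{cor:giverigidity} at the cone--structure parameter $u=(\mathbf i\alpha,\ldots,\mathbf i\alpha)\in U$ for some small $\alpha\in(0,\varepsilon)$: namely, the generic value of $\dim H^1(M;\mathfrak v_{\Ad\rho_u})$ on $U$ equals $k$, and $f$ is non--constant in a neighborhood of $0$. Since $f(0)=0$, it suffices to show $f(u)\neq 0$ and $\dim H^1(M;\mathfrak v_{\Ad\rho_u})=k$ at this single cone parameter, after which upper semicontinuity and analyticity handle the rest.

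The first ingredient is Poincar\'e--Lefschetz duality for the pair $(M,\mu)$. Thickening $\mu\subset\partial M$ to a tubular neighborhood whose complement in $\partial M$ is a disjoint union of annuli, each deformation retracting onto the corresponding $\mu_i$, the nondegeneracy of the Killing form on $\mathfrak v$ gives the self--duality $\dim H^2(M,\mu;\mathfrak v)=\dim H^1(M,\mu;\mathfrak v)=3k$. I then plug this into the long exact sequence of the pair at $u$. The Zariski density of $\rho_0(\pi_1 M)$ in $PSO(3,1)$ together with the absence of trivial $PSO(3,1)$--subrepresentations in $\mathfrak v$ give $H^0(M;\mathfrak v_{\Ad\rho_0})=0$, and upper semicontinuity makes $H^0(M;\mathfrak v_{\Ad\rho_u})=0$ persist for small $\alpha$; moreover each $\rho_u(\mu_i)$ is elliptic of rotation angle $\alpha\notin\pi\mathbf Z$, so the elliptic lemma from Section~\ref{sec:invariantsubspaces} gives $\dim H^0(\mu;\mathfrak v)=3k$. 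Thus the connecting map $H^0(\mu;\mathfrak v)\hookrightarrow H^1(M,\mu;\mathfrak v)$ is an injection between $3k$--dimensional spaces, therefore an isomorphism, which forces the next map $H^1(M,\mu;\mathfrak v)\to H^1(M;\mathfrak v)$ to vanish, equivalently $H^1(M;\mathfrak v)\hookrightarrow H^1(\mu;\mathfrak v)$. Factoring this restriction through $H^1(\partial M;\mathfrak v)$ yields injectivity of $H^1(M;\mathfrak v)\to H^1(\partial M;\mathfrak v)$, and combining with Lemma~\ref{lem:rankone}(ii) (Lagrangian image of dimension $k$) gives $\dim H^1(M;\mathfrak v_{\Ad\rho_u})=k$.

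For $f(u)\neq 0$ I compute directly: at $u=(\mathbf i\alpha,\ldots,\mathbf i\alpha)$ the generalized Dehn filling coefficients are $p_j=2\pi/\alpha$, $q_j=0$, so each $\gamma_j$ is a positive scalar multiple of $\mu_j$ and $f(u)$ equals a nonzero constant times $\det\bigl(B(z^i_u(\mu_j),a^j_u)\bigr)_{ij}$. Writing each restriction $z^i_u\vert_{\partial_j M}=\omega^i_j\otimes a^j_u$ via the isomorphism of Lemma~\ref{lem:rankone}(i), the matrix factors as $\bigl(B(a^j_u,a^j_u)\,\omega^i_j(\mu_j)\bigr)_{ij}$. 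The diagonal factor is nonzero because $a^j_u$ is a nonzero multiple of $\operatorname{diag}(1,1,-1,-1)$ in the coordinates of Lemma~\ref{lem:invhyperbolic}. The remaining determinant $\det(\omega^i_j(\mu_j))$ is nonzero because the linear map $L\to\mathbf R^k$, $z\mapsto(\omega_j(z)(\mu_j))_j$, on the image $L$ of $H^1(M;\mathfrak v)$ in $H^1(\partial M;\mathfrak v)$ has trivial kernel: if it vanished on some $z$ then $z\vert_\mu=0$, and the injection $H^1(M;\mathfrak v)\hookrightarrow H^1(\mu;\mathfrak v)$ established above would force $z=0$. Thus $f(u)\neq 0$, and Corollary~\ref{cor:giverigidity} completes the argument. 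The main obstacle I anticipate is the duality--plus--exact--sequence step that drives $\dim H^1(M;\mathfrak v)$ down to the minimum value $k$ at the cone parameter; once this is achieved, the determinant calculation for $f$ is direct.
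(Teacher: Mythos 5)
Your proposal is correct and follows essentially the same route as the paper's proof: the long exact sequence of the pair $(M,\mu)$ combined with $\dim H^0(\mu;\mathfrak v)=3k$ and the hypothesis $\dim H^1(M,\mu;\mathfrak v)=3k$ yields the injection $H^1(M;\mathfrak v_{\Ad\rho_{\alpha\mathbf i}})\hookrightarrow H^1(\mu;\mathfrak v)$, hence $\dim H^1(M;\mathfrak v)=k$ via Lemma~\ref{lem:rankone}, and the splitting of Lemma~\ref{lem:rankone}(i) together with nondegeneracy of $B$ on $\mathfrak v^{\rho_u(\pi_1(\partial_j M))}$ gives $f(\alpha\mathbf i)\neq 0$, so Corollary~\ref{cor:giverigidity} applies. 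The only differences are cosmetic: your Poincar\'e--Lefschetz computation of $\dim H^2(M,\mu;\mathfrak v)$ is never actually used (the exact-sequence step needs only $H^0(M;\mathfrak v)=0$, which you rightly make explicit, plus the two dimensions equal to $3k$), and you argue the nonvanishing of the determinant as the matrix of an isomorphism $L\to\mathbf R^k$ instead of choosing the adapted basis $z_u^j=\hat\mu_j\otimes a^j_u$ as the paper does, which is an equivalent bookkeeping.
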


\begin{proof} 
We first outline the proof.  
Our goal is to prove the proposition by applying Corollary~\ref{cor:giverigidity}.
The dimension of  $H^1(M; \mathfrak{v}_{Ad \rho_{u }})$ is $k$ when $u= \alpha \mathbf i$ by an argument on the long exact sequence in cohomology of the pair $(M,\partial M)$.
The same dimension count holds for a generic $u\in U$, by semicontinuity. In addition, the long exact sequence in cohomology also tells that 
$H^1(M; \mathfrak{v}_{Ad \rho_{u }})$ injects in $H^1(\mu; \mathfrak{v}_{Ad \rho_{u }})$, when $u= \alpha \mathbf i$. This
gives a basis for  $H^1(M; \mathfrak{v}_{Ad \rho_{u }})$ so that when we look at its image in  $H^1(\partial M; \mathfrak{v}_{Ad \rho_{u }})$ and we compute $f$,
we have that $f(u)\neq 0$, for $u=\alpha \mathbf i$.

Now we proceed with the details.
Since $\rho_{\alpha \mathbf i }(\mu_j)$ is a rotation of angle $0<\alpha<\pi$, by Lemma~\ref{lem:invhyperbolic}
$\dim H^0(\mu_j;\mathfrak{v}_{Ad\rho_{\alpha \mathbf i}})=\dim \mathfrak{v}^{Ad\rho_{\alpha \mathbf i}(\mu_j)}=3$, and therefore 
$$\dim H^0(\mu;\mathfrak{v}_{Ad\rho_{\alpha \mathbf i}})=3 k.$$

Then the long exact sequence of the pair $(M,\mu)$ starts as follows:
$$
0\to H^0(\mu, \mathfrak{v}_{Ad \rho_{\alpha \mathbf i}})\to  H^1(M,\mu, \mathfrak{v}_{Ad \rho_{\alpha  \mathbf  i}})\to H^1(M, \mathfrak{v}_{Ad \rho_{\alpha  \mathbf  i}})\to\cdots.
$$
Since $\dim  H^0(\mu; \mathfrak{v}_{Ad \rho_{\alpha  \mathbf i}})= \dim H^1(M,\mu, \mathfrak{v}_{Ad \rho_{\alpha  \mathbf i}})$, we have an inclusion
$$
0\to H^1( M; \mathfrak{v}_{Ad \rho_{\alpha  \mathbf i}})\to H^1(\mu; \mathfrak{v}_{Ad \rho_{\alpha  \mathbf i}}).
$$
The inclusion of $\mu$ in $M$ factors through $\partial M$, hence by Lemma~\ref{lem:rankone}, it follows that
$$
\dim H^1( M; \mathfrak{v}_{Ad \rho_{\alpha  \mathbf i}})=k,
$$
which is the first condition for applying Corollary~\ref{cor:giverigidity}, 
by upper semi-continuity of the dimension of $H^ 1$.

Moreover, using Lemma~\ref{lem:rankone} (i), it follows that
$$
H^1(M; \mathfrak{v}_{Ad\rho_{\alpha  \mathbf i}})\cong 
\bigoplus_{j=1}^k H^1(\mu_j;\mathbf R) \otimes \mathfrak{v}^{\rho_{\alpha  \mathbf i}(\pi_1(\partial_j M))}\,.$$
This implies that one can choose a basis $\{z_u^1,\ldots,z_u^k\}$ for $H^1(M; \mathfrak{v}_{Ad\rho_{\alpha  \mathbf i}})$, where
$z_u^j=\hat \mu_j\otimes a_{{\alpha\, \mathbf i}}^j$
and
$\hat \mu_j\in H^1(\mu_j;\mathbf Z)$ is the dual of the fundamental class in $H_1(\mu_j;\mathbf Z)$.
Thus, since $p_j=2\pi/\alpha$ and $q_j=0$, we get
$$
f(\alpha \mathbf i)=\frac{\alpha^k}{(2\pi)^k}B(a_{\alpha\, \mathbf i}^1, a_{\alpha\, \mathbf i}^1)\cdots B(a_{\alpha\, \mathbf i}^k, a_{\alpha\, \mathbf i}^k) \neq 0,
$$ as the Killing form on
$
\mathfrak{v}^{\rho_{{\alpha\, \mathbf i}}(\pi_1(\partial_j M))}
$ is non-degenerate.
\end{proof}

\begin{proof}[Proof of Theorem~\ref{thm:fillingstrong}]
As $M$ is infinitesimally projectively rigid, by Lemma~\ref{lem:almosteveryslope} 
we can choose a set of slopes $\mu=\mu_1\cup\cdots \cup \mu_k$, so that 
$$
0\to H^1(M;\mathfrak{v}_{\Ad\rho_0})\to  H^1(\mu;\mathfrak{v}_{\Ad\rho_0})
$$ is exact.
By the long exact sequence of the pair $(M,\mu)$, since $\dim \mathfrak{v}^{\Ad \rho_0(\mu_j)}=3$, this is equivalent to saying
that $\dim H^1(M,\mu;\mathfrak{v}_{\Ad\rho_0})=3 k$. By analyticity and upper semi-continuity of the dimension of the  cohomology,
the hypothesis of Proposition~\ref{prop:conerigid} holds true.
\end{proof}

\begin{proof}[Proof of Theorem~\ref{thm:fromonetomany}]
To simplify, we first assume that the infinitesimally projectively rigid Dehn filling can be connected to the complete structure by a family of cone manifold structures of cone angle $\alpha\in [0, 2\pi]$, 
where $\alpha=2\pi$
corresponds to the Dehn filling and $\alpha=0$ to the complete structure. Notice that this is the case of the Dehn fillings provided by Hodgson and Kerckhoff in their estimation of the size of the hyperbolic 
Dehn filling space \cite{HodgsonKerckhoff} (hence of all but at most 60 Dehn fillings).

Let $M_{(\mathbf p,\mathbf q)}$ be infinitesimally projectively rigid and let
$u_{(\mathbf p,\mathbf q)}\in U$ denote the parameter in the Thurston slice corresponding to the holonomy of the structure on $M$ induced by the Dehn filling. Let $V\subset U$ denote the domain of definition of the real analytic function $f\co V\to\mathbf R$.

As in the proof of  Lemma~\ref{lemma:fnonzero}, a Mayer-Vietoris argument gives that 
$$
\dim H^1(M; \mathfrak{v}_{\Ad\rho_{u_{(\mathbf p,\mathbf q)}}})=k.
$$ 
Moreover, if the parameter $u_{(\mathbf p,\mathbf q)}$ is contained in $V$ then $f({u_{(\mathbf p,\mathbf q)}})\neq 0$. 

A priori $V$ could be a smaller neighborhood of the origin and
$u_{(\mathbf p,\mathbf q)}\in U \smallsetminus V$ might happen.
The problem is that the cohomology classes 
$z_{u}^1,\ldots, z^k_{u}\in 
\image( H^1(M;\mathfrak{v}_{\Ad\rho_u})\to H^1(\partial M; \mathfrak{v}_{\Ad\rho_u}))$
could be linearly dependent or even not be defined outside $V$. 
To fix that,  we use the path of hyperbolic cone structures, that gives a segment in $U$, that we parametrize by the cone angle $\alpha\in [0,2\pi]$. Let $u_{\alpha}\in U$ denote the parameter
of the deformation space. By upper semi-continuity, 
$$
\dim H^1(M; \mathfrak{v}_{\Ad\rho_{u_{\alpha}}})=k=
\dim H^1(M; \mathfrak{v}_{\Ad\rho_{u_{2\pi}}})
$$
for almost all $\alpha\in [0,2\pi]$.
Let $\gamma_1,\ldots,\gamma_k$ denote the filling slopes.
By compactness, the segment $ [0,2\pi]$ is  covered by  intervals
$$
[0,2\pi]=[0,\alpha'_0)\cup (\alpha_1,\alpha_{1}')\cup\cdots \cup (\alpha_r,2 \pi],
$$
so that on each $(\alpha_i,\alpha_{i}')$, 
  there exist a family of  cohomology classes 
$$z_{\alpha}^1,\ldots, z_{\alpha}^k\in \image( H^1(M;\mathfrak{v}_{\Ad\rho_{u_{\alpha}}})\to H^1(\partial M; \mathfrak{v}_{\Ad\rho_{u_{\alpha}}}))$$  
that vary analytically on $\alpha\in(\alpha_i,\alpha_{i}')$ and are linearly independent
for each $\alpha\in(\alpha_i,\alpha_{i}')$, 
by Lemma~\ref{lem:cohomologyclasses}. On each interval we may use the cohomology
classes to construct functions 
$$
f_i:(\alpha_i,\alpha_{i}')\to \mathbf R
$$ similar to $f$, i.e. as the determinant of the matrix of Kronecker pairings between 
$z_{\alpha}^l$ and the homology class represented
by $a_{\alpha}^j\otimes \frac{\alpha}{2\pi} \gamma_j$.
The function $f_0:(\alpha_0,\alpha_{0}')\to \mathbf R$ can be chosen to be the restriction of $f:U\to\mathbf R$. In addition, the functions $f_i$ and $f_{i+1}$ may differ on 
$(\alpha_i',\alpha_{i+1})$, but $f_i(\alpha)=0$ if and only if $f_{i+1}(\alpha)=0$, for every $\alpha\in (\alpha_i',\alpha_{i+1})$. Since $f_r(2\pi)\neq 0$ and the $f_i$ are analytic,
$f_0$ and $f$ are non-zero. In addition, by upper semi-continuity, the generic dimension of $H^1(M; \mathfrak{v}_{\Ad\rho_{u}})$ is $k$, hence 
 we may apply 
Corollary~\ref{cor:giverigidity}. This finishes the proof when there is a segment of hyperbolic cone structures connecting $u_{(\mathbf p,\mathbf q)}$ to the origin.

In the general case, instead of considering a path of hyperbolic cone structures, which is a straight segment in $U$, it is sufficient to take a piece-wise analytic path  in $U$ connecting 
$u_{(\mathbf p,\mathbf q)}$ to the origin, and apply the previous argument. The only key point is that  we have to chose piecewise-analytic 
paths so that their non-smooth points are not in the vanishing locus of $f$,
which is always possible by genericity. 
\end{proof}

\section{Rigid slopes}
\label{sec:flexingslopes}

\begin{definition} Let $M^3$  be a cusped hyperbolic manifold of finite volume which is infinitesimally projectively rigid with respect to the cusps. Let $\gamma$ be a slope of $\partial_1 M$, We say that 
$\gamma$ is a \emph{rigid slope} if the map
$$
i_{\gamma}^*\co H^1(M;\mathfrak{v}_{\Ad\rho_0})\to H^1(\gamma; \mathfrak{v}_{\Ad\rho_0})
$$
is nontrivial.
\end{definition}

\begin{prop}
\label{prop:flexingslope}
Let $M^3$  be a cusped hyperbolic manifold of finite volume which is infinitesimally projectively rigid with respect to the cusps 
and let $\mu,\lambda\in\partial_1 M$ be a pair of simple closed curves generating the fundamental group of 
$\partial_1 M$.
Let $(p_n ,q_n)\in\mathbf Z^2$ be a sequence of coprime integers lying on a line $a \, p_n+ b\, q_n = c$. If $\gamma= -b \mu+a \lambda$ is a rigid slope, then $M^3_{(p_n,q_n),\infty,\cdots,\infty}$
is infinitesimally rigid with respect to the cusps for $n$ large enough.
\end{prop}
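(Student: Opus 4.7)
The plan is to show that $M'_n := M^3_{(p_n,q_n),\infty,\ldots,\infty}$ satisfies
\begin{equation*}
\dim H^1(M'_n;\mathfrak v_{\Ad\rho_{u_n}}) = k-1
\end{equation*}
for $n$ large, where $u_n\in U$ is the Thurston parameter associated to the partial filling (lying on the slice $u_2=\cdots=u_k=0$ and tending to $0$). Since $M'_n$ has $k-1$ cusps, Corollary~\ref{cor:dim} then yields the conclusion. I would compute this dimension via the Mayer--Vietoris sequence for the decomposition $M'_n = M\cup_{\partial_1 M}N_1$, where $N_1$ is the solid torus whose meridian is $\gamma_n := p_n\mu+q_n\lambda$.

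For all but a discrete set of $n$, the core of $N_1$ is loxodromic with angular part outside $\pi\mathbf Z$, so Lemma~\ref{lem:invhyperbolic}(i) gives $\dim H^0(N_1;\mathfrak v) = \dim H^1(N_1;\mathfrak v) = 1$, and the restriction $H^0(N_1;\mathfrak v)\to H^0(\partial_1 M;\mathfrak v)$ is an isomorphism onto the $1$-dimensional space from Lemma~\ref{lem:invkilling}(ii). The Mayer--Vietoris sequence in degree one then becomes
\begin{equation*}
0\to H^1(M'_n;\mathfrak v)\to H^1(M;\mathfrak v)\oplus H^1(N_1;\mathfrak v)\xrightarrow{j^*} H^1(\partial_1 M;\mathfrak v),
\end{equation*}
and since $\dim H^1(M;\mathfrak v_{\Ad\rho_{u_n}})=k$ (by semicontinuity and the hypothesis that $M$ is infinitesimally projectively rigid) together with $\dim H^1(\partial_1 M;\mathfrak v)=2$ (Lemma~\ref{lem:rankone}(ii)), the target $\dim H^1(M'_n;\mathfrak v)=k-1$ is equivalent to the surjectivity of $j^*$.

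I would reformulate this surjectivity as an evaluation problem. Using Lemma~\ref{lem:rankone}(i) together with the analytic generator $a_u^1$ of Lemma~\ref{lem:cohomologyclasses}(1), I decompose $\operatorname{res}_1(x)=\eta_u(x)\otimes a_u^1$ for $x\in H^1(M;\mathfrak v_{\Ad\rho_u})$, with $\eta_u(x)\in H^1(\partial_1 M;\mathbf R)$. The image of $H^1(N_1;\mathfrak v)$ in $H^1(\partial_1 M;\mathfrak v)$ is exactly $\{\eta\otimes a_u^1 : \eta(\gamma_n)=0\}$, so $j^*$ is surjective precisely when some $x$ satisfies $\eta_{u_n}(x)(\gamma_n)\neq 0$. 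The flexing slope hypothesis provides a class $x_0\in H^1(M;\mathfrak v_{\Ad\rho_0})$ with $\eta_0(x_0)(\gamma)\neq 0$; extending to an analytic family $x_u$ via Lemma~\ref{lem:cohomologyclasses}(2), the functional $L_u(v):=\eta_u(x_u)(v)$ is jointly analytic in $u$ and linear in $v$, with $L_0(\gamma)\neq 0$.

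The final step exploits the line condition. Writing $(p_n,q_n)=(\bar p,\bar q)+t_n(-b,a)$ with $t_n\to\infty$ (from $ap_n+bq_n=c$), I get $\gamma_n=\bar p\mu+\bar q\lambda+t_n\gamma$, and by linearity
\begin{equation*}
\frac{L_{u_n}(\gamma_n)}{t_n} = \frac{L_{u_n}(\bar p\mu+\bar q\lambda)}{t_n}+L_{u_n}(\gamma)\xrightarrow{n\to\infty} L_0(\gamma)\neq 0,
\end{equation*}
so $L_{u_n}(\gamma_n)\neq 0$ for all sufficiently large $n$, which gives the required surjectivity of $j^*$. The main obstacle will be the careful bookkeeping of coefficient modules through the Mayer--Vietoris computation and the verification that the analytic family $x_u$ of Lemma~\ref{lem:cohomologyclasses}(2) can be chosen so that $x_0$ realizes the flexing slope; the asymptotic dominance of the linear-in-$t_n$ term over the bounded correction is then immediate from the parametrization of the line along $\gamma$.
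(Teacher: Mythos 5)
Your reduction is fine as far as it goes: the Mayer--Vietoris decomposition $M'_n=M\cup_{\partial_1M}N_1$, the identification of the image of $H^1(N_1;\mathfrak v)$ in $H^1(\partial_1M;\mathfrak v)\cong H^1(\partial_1M;\mathbf R)\otimes\mathfrak v^{\rho_{u_n}(\pi_1\partial_1M)}$ with the classes killing $\gamma_n$, and the criterion that $j^*$ is onto iff some class in the image of $H^1(M;\mathfrak v)$ evaluates nontrivially on $\gamma_n$ -- this is essentially the content of Lemma~\ref{lemma:fnonzero} combined with Lemmas~\ref{lem:invkilling} and~\ref{lem:rankone}, and the semicontinuity argument for $\dim H^1(M;\mathfrak v_{\Ad\rho_{u_n}})=k$ is also the paper's. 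The genuine gap is the final step, where you declare $L_u(v)=\eta_u(x_u)(v)$ ``jointly analytic in $u$'' with a finite nonzero limit $L_0(\gamma)$. The splitting $\operatorname{res}_1(x)=\eta_u(x)\otimes a^1_u$ exists only for $u_1\neq0$ (Lemma~\ref{lem:rankone}(i)) and degenerates completely at the complete structure: as $u\to0$ the invariant vector becomes an infinitesimal parabolic up to the term $4\sinh^2(u/2)V_0$ (Lemma~\ref{lem:vutaylor}), so $B(a^1_u,a^1_u)\to0$ (Lemma~\ref{lem:orthogonalparabolic}); worse, at $u=0$ every cocycle of the form $\gamma\mapsto\eta(\gamma)a^1_0$ is a coboundary, since $H^1(\partial_1M;\mathfrak v_{\Ad\rho_0})$ is generated by the classes $z_\mu,z_\lambda$ of Lemma~\ref{lem:angles}, whose values are the elliptic elements $a_\lambda,a_\mu$, not the parabolic invariant vector. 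Hence if $\operatorname{res}_1(x_0)\neq0$ the coordinates $\eta_u(x_u)$ must blow up as $u\to0$: your functional $L_u$ is not defined at $u=0$, and in the limit $L_{u_n}(\gamma_n)/t_n=L_{u_n}(\bar p\mu+\bar q\lambda)/t_n+L_{u_n}(\gamma)$ neither term converges, so the claimed conclusion $L_{u_n}(\gamma_n)\neq0$ for large $n$ has no justification. Relatedly, translating the flexing--slope hypothesis (nonvanishing of $i^*_\gamma$ at the parabolic representation) into a statement about the degenerating frame $a^1_u$ is itself nontrivial and is not addressed.

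This degeneration analysis is precisely the hard core of the proposition in the paper: one restricts to the real-analytic path $u(s)$ determined by the line of slopes, normalizes by $\Vert a_{u(s)}\Vert$, and proves (Lemma~\ref{lem:Bsto0}, using Lemma~\ref{lem:vutaylor}, Lemma~\ref{lem:orthogonalparabolic} and the explicit cocycles of Lemma~\ref{lem:angles}) that $B(\theta_{u(s)}(\lambda),a_{u(s)})/\Vert a_{u(s)}\Vert\to16$ and $B(\theta_{u(s)}(\mu),a_{u(s)})/\Vert a_{u(s)}\Vert\to16\beta$; the nonzero limit comes from $B(a_\mu,V_0)\neq0$, and this is exactly where the flexing-slope hypothesis enters quantitatively. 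Only with these rates does the asymptotic along the line (where the coefficient of the flexing slope grows like $1/s$, beating the vanishing of $\Vert a_{u(s)}\Vert$ and the bounded $\mu$-term) give the nonvanishing of the pairing for small $s\neq0$, after which one concludes as you intend via Lemma~\ref{lemma:fnonzero}. So the skeleton of your argument matches the paper, but the step you dismiss as ``immediate'' bookkeeping is the actual content of the proof, and as written it is false rather than merely unproved.
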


\begin{proof}
After changing the basis in homology, the curves $\mu$ and 
$\lambda$ are chosen such that  $a=1$, $b=0$, i.e.\ 
$\lambda=(0,1)$ is the rigid slope. 
We also may assume $(p_n,q_n)=(c,n)$. 

Let us consider the path
$$ s\mapsto \begin{cases} (c,\frac 1s) & \text{ if } s\neq 0\\
\infty & \text{ if } s =0 \end{cases}$$ 
in the parameter space. Denote by $u(s)$ the corresponding point in the deformation space. 

\begin{lemma}
The path $u(s)$ is a real analytic on  ${s\in (-\varepsilon, \varepsilon)}$.
\end{lemma}

\begin{proof}
Setting $\tau(u)=v(u)/u$, from $p\, u+q\, v= u( c+\frac{1}{s}\tau(u))=2\pi\mathbf i$ 
we can write
$$
u({s\, c+ \tau(u)})
={s\, 2\pi\mathbf i}.
$$ 
Since $\tau(0)\neq 0$ and $\tau$ is analytic on $u$, this allows to define $u$ as analytic function 
on $s$, by applying the analytic version of the implicit function theorem.
\end{proof}

Let $\theta_u\in \image\big(H^1(M;\mathfrak{v}_{Ad\rho_u})\to H^1(\partial_1M;\mathfrak{v}_{Ad\rho_u})\big)$ be an analytic family of cohomology classes, so that  $i_{\lambda}^*(\theta_0)\neq 0$. This is always possible since $i_{\lambda}^*$ factors through $H^1(\partial_1M;\mathfrak{v}_{Ad\rho_u})$.

The two cohomology classes
$z_{\mu}, z_{\lambda}\in H^1(\partial_1 M;\mathfrak{v}_{Ad\rho_0})$ as defined in the proof of Lemma~\ref{lem:angles} satisfy  
$i_{\mu}^*(z_{\lambda})=i_{\lambda}^*(z_{\mu})=0$,
$i_{\mu}^*(z_{\mu})\neq 0$, and $i_{\lambda}^*(z_{\lambda})\neq 0$.
Hence we may assume that 
$$
\theta_0= z_{\lambda}+ \beta z_{\mu},\qquad\textrm{ for some }
\beta\in\mathbf R.
$$

Let also $a_{u(s)}\in \mathfrak{v}^{\Ad\rho_{u(s)}(\pi_1(\partial_1 M))}$ be an analytic family of invariant elements, with $a_0\neq 0$.
As in Lemma~\ref{lem:cohomologyclasses}, we want to see that for $s>0$, the following function does not vanish:
\begin{align*}
f(s) := &\bigg\langle \theta_{u(s)}, a_{u(s)}\otimes \frac{ c\mu+\frac1s\lambda}{| c +\frac1s\tau|^2}
\bigg\rangle\\ = & \frac{s}{| s\,c+\tau|^2}
\big\langle 
\theta_{u(s)} ( s\,c\mu+\lambda ), a_{u(s)}
\big\rangle.
\end{align*}

Notice that it follows from the proof of Lemma~\ref{lem:invkilling} that for small $s$, $s\neq0$, the restriction of the Killing  form on the subspace $\mathfrak{v}^{\Ad\rho_{u(s)}(\pi_1(\partial_1 M))}$ is positive definite i.e.\ $B(a_{u(s)},a_{u(s)})>0$ for sufficiently small $s\neq 0$.
\begin{lemma}
\label{lem:Bsto0}
If $\Vert a_{u(s)} \Vert=B(a_{u(s)},a_{u(s)})^{1/2}$, then
$$
\lim\limits_{s\to 0}\frac{B( \theta_{u(s)}(\lambda), a_{u(s)})}
{\Vert a_{u(s)}\Vert}=16
\quad\textrm{ and }\quad
\lim\limits_{s\to 0}\frac{B( \theta_{u(s)}(\mu), a_{u(s)})}{\Vert a_{u(s)}\Vert}=16\beta.
$$
\end{lemma}

Assuming the lemma we obtain
\[
\frac{f(s)}{s\, \Vert a_{u(s)}\Vert}=
\frac1{| s\, c+\tau | ^2}
\left(
\frac{B( \theta_{u(s)}(\lambda), a_{u(s)})}{\Vert a_{u(s)}\Vert}
+s\, c
\frac{B( \theta_{u(s)}(\mu), a_{u(s)})}{\Vert a_{u(s)}\Vert}
\right)\]
and hence
\[
\lim_{s\to0} \frac{f(s)}{s\, \Vert a_{u(s)}\Vert} =
\frac{16}{|\tau_0|^2}\,.
\]
Hence $f(s)\neq 0$ for $s\neq 0$. Moreover, since the dimension of $H^1(M; \mathfrak{v}_{Ad\rho_{u}})$
is upper semi-continuous with respect to $u$, it still satisfies $\dim (H^1(M; \mathfrak{v}_{Ad\rho_{u(s)}}))= k$. By analyticity
those conditions are satisfied for all but finitely many $s$, hence we may apply  Lemma~\ref{lemma:fnonzero}.

This concludes the proof 
of Proposition~\ref{prop:flexingslope} assuming Lemma~\ref{lem:Bsto0}.
\end{proof}

Before proving Lemma~\ref{lem:Bsto0}, we still need a further computation. 
Let $w_0\in \mathfrak{su}(3,1)$ denote 
$$
w_0=\frac{\mathbf i}{2} V_0,\qquad \text{ where }
V_0=\begin{pmatrix}
        1 & & & \\
	& 1 & & \\
	& & -1 & \\
	& & & -1
   \end{pmatrix}.
$$
Note that $w_0$ is contained in 
$\mathfrak g_0\subset\mathfrak{su}(3,1)$ which is the Lie algebra of the stabilizer of $[v_\pm]\in\partial_{\infty} \mathbf H^3_{\mathbf C}$.

\begin{lemma} 
\label{lem:vutaylor} The invariant element $a_u\in \mathfrak{v}^{\rho_u(\pi_1(\partial_1 M))}$ can be chosen such that:
$$a_u= p(u)+ 4\left|\sinh^2\frac u2\right| V_0$$
where $p(u)$ is an infinitesimal parabolic transformation.
\end{lemma}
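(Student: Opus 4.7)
My plan is to exhibit a concrete analytic formula for $a_u$ and then read off the decomposition directly from it. The natural candidate is the matrix polynomial
\[
a_u := 4\cosh^2\tfrac u2\, I_4 - 2\bigl(\rho_u(\mu) + \rho_u(\mu)^{-1}\bigr),
\]
where $\mu$ is one of the two generators of $\pi_1(\partial_1 M)$. I would first verify that $a_u\in\mathfrak v$: the involution relation $x^tJ=Jx$ follows from $\rho_u(\mu)\in SO(3,1)$ (giving $\rho_u(\mu)^tJ=J\rho_u(\mu)^{-1}$), and the constant $4\cosh^2(u/2)$ is exactly the trace correction, since $\tr\rho_u(\mu)=4\cosh^2(u/2)$ for an element of complex length $u$. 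Invariance under $\rho_u(\pi_1(\partial M))$ is automatic, as $a_u$ is a polynomial in $\rho_u(\mu)$ and the peripheral group is abelian. Nonvanishing at $u=0$ follows from $a_0=4I_4-2(H+H^{-1})=2v_-v_-^*$, where $H=\rho_0(\mu)$ is the standard unipotent parabolic $H(1,0,0)$ with fixed point $[v_-]$.

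To extract the decomposition, I would work in the parametrization $\rho_u(\mu)=H\exp(u\eta)$, with $\eta$ the generator of translation along the geodesic $[v_+][v_-]$ from Section~\ref{subsectioncomplex}. Substituting, applying $\cosh u-1=2\sinh^2(u/2)$, and using the identity $H+H^{-1}-2I_4=-v_-v_-^*$, the expression rearranges as
\[
a_u = 4\sinh^2\tfrac u2\bigl(I_4 - HP_{34} - P_{34}H^{-1}\bigr) - 2\sinh u\,(H\eta - \eta H^{-1}) + 2v_-v_-^*,
\]
where $P_{34}=\diag(0,0,1,1)$. A brief matrix calculation then yields the clean identity $I_4 - HP_{34} - P_{34}H^{-1} - V_0 = -(H\eta-\eta H^{-1})$, which isolates the $V_0$ contribution from the parabolic ones.

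Combining these two identities and using $2\sinh u+4\sinh^2(u/2)=2(e^u-1)$ gives
\[
a_u - 4\sinh^2\tfrac u2\,V_0 = -2(e^u-1)(H\eta-\eta H^{-1})+2v_-v_-^* =: p(u),
\]
which is visibly analytic in $u$. To close the argument I must show $p(u)\in\mathfrak g_{-1}\oplus\mathfrak g_{-2}$. Since $v_-v_-^*\in\mathfrak g_{-2}$, this reduces to $H\eta-\eta H^{-1}\in\mathfrak g_{-1}\oplus\mathfrak g_{-2}$, which I check by evaluating on the weight basis for $\operatorname{ad}\eta$: the matrix annihilates $v_-$ (because $H$ fixes $v_-$ and $v_-$ is a $(-1)$-eigenvector of $\eta$), and its image of $v_+$ lies in $\operatorname{span}(e_1,e_2,v_-)$ with no $v_+$-component, which by the grading rules out a $\mathfrak g_0$-contribution.

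The main obstacle I anticipate is justifying the normal form $\rho_u(\mu)=H\exp(u\eta)$ within the analytic family $\{\rho_u\}_{u\in U}$: normalizing $\rho_0(\mu)=H$ is immediate, but aligning the first-order deformation direction with the standard $\eta$ requires an analytic family of conjugations in $PSO(3,1)$. The residual freedom is precisely the stabilizer of $H$, which commutes with $V_0$ and preserves $v_-v_-^*$ up to scalars, so the decomposition is canonical modulo the parabolic ambiguity already absorbed into $p(u)$.
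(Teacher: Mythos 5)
Your route is genuinely different from the paper's (which simply conjugates the standard invariant vector $w_0=\tfrac{\mathbf i}{2}V_0$ by the Heisenberg translation $(x,y,0)$ carrying $[v_+]$ to the second fixed point $-1/(2\sinh(u/2))$ of $\rho_u(\mu)$, and then rescales), and the algebra you do carry out is correct: indeed $4I_4-2(H+H^{-1})=2v_-v_-^*$, the identity $I_4-HP_{34}-P_{34}H^{-1}-V_0=-(H\eta-\eta H^{-1})$ holds, and $H\eta-\eta H^{-1}=-(e_1v_-^*+v_-e_1^*)-v_-v_-^*$, so your $p(u)$ is an infinitesimal parabolic fixing $[v_-]$ as required (your ``weight basis'' justification is thin --- an element of $\mathfrak g_0$ can also kill $v_-$ --- but the membership is checkable directly, and it is exactly what feeds Lemma~\ref{lem:orthogonalparabolic} later).

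The genuine gap is that your ansatz only makes sense when the complex length $u$ is real (or purely imaginary). For $u=l+\mathbf i\alpha$ one has $\tr\rho_u(\mu)=2\cosh l+2\cos\alpha=4\vert\cosh(u/2)\vert^2$, not $4\cosh^2(u/2)=2+2\cosh u$; so for generic complex $u$ your $a_u$ is neither real nor traceless and does not lie in $\mathfrak v$ at all, and likewise $\exp(u\eta)\notin SO(3,1)$ for non-real $u$, so the normal form $\rho_u(\mu)=H\exp(u\eta)$ is unavailable --- you would need $\exp(l\eta+\alpha R)$ with the infinitesimal rotation $R$ about the axis and a two-real-parameter computation. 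This is not a corner case: the lemma is invoked along the path $u(s)\approx 2\pi\mathbf i\, s/\tau(0)$ in Proposition~\ref{prop:flexingslope}, where $u$ is genuinely complex. The repaired version of your construction, $a_u:=\tr\bigl(\rho_u(\mu)\bigr)I_4-2\bigl(\rho_u(\mu)+\rho_u(\mu)^{-1}\bigr)$, does work for every $u$ (real, traceless, $J$-symmetric, invariant, analytic) and yields the coefficient $4\vert\sinh(u/2)\vert^2=2(\cosh l-\cos\alpha)$ in front of $V_0$; this is also what the paper's computation really produces (its $\sinh^2(u/2)$ should be read as $\vert\sinh(u/2)\vert^2$, which is harmless in Lemma~\ref{lem:Bsto0}), but with your method the complex case requires redoing the expansion, not just reinterpreting a symbol. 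Finally, your closing remark that the residual freedom ``is the stabilizer of $H$, which commutes with $V_0$'' is false: parabolics centralizing $H$ do not commute with $V_0$. What actually makes the normalization harmless is that conjugating $V_0$ by such elements changes it only by adding infinitesimal parabolics fixing $[v_-]$ (precisely the computation in the paper's proof), so the shape $p(u)+c(u)V_0$ and the coefficient $c(u)$ are unaffected; that is the statement you need to make.
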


\begin{proof}
Since $w_0$ is invariant by the stabilizer $G_0$ 
for $u\neq 0$, $a_u$ can be obtained by conjugating $w_0$, and then by normalizing the result so that the limit exists if $u$ tends to $0$.

Recall that in the Heisenberg model the subgroup of real parabolic representations corresponds to $\mathbf R^2\times\{0\}\subset \mathcal H_- \subset G_- = \mathcal H_- \rtimes(U(2)\times\mathbf R)$. Note also that $w_0$ is the image of $\mathbf i I_2$ under the canonical inclusion $\mathfrak u(2)\hookrightarrow \mathfrak{su}(3,1)$.

Suppose that  $(x,y,0)\in\mathbf R^2\times\{0\}$ is the second  fixed point  of $\rho_u(\pi_1 \partial M)$.  In the notation of 
$PSL_2(\mathbf C)$ we have
$$
\rho_u(\mu)=\pm\begin{pmatrix} e^{u/2} & 1 \\ 0 & e^{-u/2} \end{pmatrix},
$$
hence
$$
x+\mathbf i y = \frac{-1}{2\sinh(u/2)}.
$$
Using the formalism of $G_-$, the conjugate of $w_0$ we are looking for is:
\begin{align*}
\Ad_{(x,y,0)}
\begin{pmatrix} \mathbf i & 0 \\ 0 &\mathbf i \end{pmatrix}
&= \frac{d}{dt} (x,y,0)
\begin{pmatrix} e^{\mathbf i t} & 0 \\ 0 &e^{\mathbf i t} \end{pmatrix} (-x,-y,0)\bigg|_{t=0}\\
&= \frac{d}{dt} (x,y,0) (-xe^{\mathbf i t},-ye^{\mathbf i t},0)\begin{pmatrix} e^{\mathbf i t} & 0 \\ 0 &e^{\mathbf i t} \end{pmatrix}\bigg|_{t=0}\\
&= \frac{d}{dt} 
\big(x(1-e^{\mathbf i t}) ,y(1-e^{\mathbf i t}),(x^2+y^2)\sin(t)\big) \begin{pmatrix} e^{\mathbf i t} & 0 \\ 0 &e^{\mathbf i t}\end{pmatrix} \bigg|_{t=0}\\
&= \big(- \mathbf i x , - \mathbf i y ,(x^2+y^2) \big) + 
\begin{pmatrix} \mathbf i & 0 \\ 0 & \mathbf i\end{pmatrix}\,.
\end{align*}
Under the inclusion $\mathfrak g_-\hookrightarrow\mathfrak{su}(3,1)$ this element is written as
\begin{equation*}
\begin{pmatrix}
        \frac{\mathbf i}2 & & & \\
	& \frac{\mathbf i}2 & & \\
	& & -\frac{\mathbf i}2 & \\
	& & & -\frac{\mathbf i}2
   \end{pmatrix}
-
\mathbf i \begin{pmatrix}
                             0 & 0 & x & y \\
			      0 & 0 & x & y \\
			      -x & x & 0 & 0 \\
			      -y & y & 0 & 0
                            \end{pmatrix}
+
\mathbf i (x^2+y^2) \begin{pmatrix}
                             0 & 0 & 0 & 0 \\
			      0 & 0 & 0 & 0 \\
			      0 & 0 & 1 & -1 \\
			      0 & 0 & 1 & -1
                            \end{pmatrix}.
\end{equation*}
Hence $  \Ad_{(x,y,0)}(w_0) = w_0 +\textrm{Parabolic}$.

Now $x^2+y^2=1/\vert 4\sinh^2(u/2)\vert$ and in order to obtain an invariant matrix which converges when $u\to 0$ we take
$$
a_u=  -\mathbf i 4 \left|\sinh^2\frac u2\right| \Ad_{(x,y,0)}(w_0)=   4 \left|\sinh^2\frac u2\right|  V_0 +\textrm{Parabolic}
$$
and the lemma is clear.
\end{proof}

\begin{proof}[Proof of Lemma~\ref{lem:Bsto0}]
Using Lemmas~\ref{lem:vutaylor} and \ref{lem:orthogonalparabolic} we obtain:
\begin{eqnarray*}
B(a_u,a_u)^{1/2} & = &  4 \left|\sinh^2 (u/2) \right| \, B(V_0,V_0)^{1/2} =   8 \left|\sinh^2  (u/2)\right|; \\
B( \theta_{u(s)}(\lambda),a_{u(s)}) & = & B(  \theta_{u(s)}(\lambda), V_0) \,4  \left |\sinh^2(u/2) \right| .
\end{eqnarray*}
Hence 
$$
\frac {B( \theta_{u(s)}(\lambda),a_{u(s)})} {\Vert a_{u(s)}\Vert }= 
\frac 1 2 B( \theta_{u(s)}(\lambda), V_0)\to \frac12
B( \theta_{u(0)}(\lambda), V_0) \textrm{ as }
s\to 0, $$
and 
\[
B( \theta_{u(0)}(\lambda), V_0)= B(z_{\lambda}(\lambda),V_0)=\\
B(a_\mu,W_0)
= 32\,.
\]
A similar computation holds for $\theta_{u(s)}(\mu)$.
\end{proof}

\section{Examples}
\label{sec:examples}

In this section we compute two examples, the figure eight knot and the Whitehead link exteriors. We start introducing some notation. 
Let $\mathbf{x}\in\mathbf{R}^4$ be a column vector. As in Section~\ref{subsectioncomplex} we will use the following notation: $\mathbf{x}^*=\mathbf{x}^t J$. Then for all 
$\mathbf x, \mathbf y\in\mathbf R^4$ we have that
$\mathbf x \mathbf y^* + \mathbf y\mathbf x^*\in\mathfrak v$. 
In the sequel we will make use of the following basis 
$\{ v_1,\ldots, v_9\}$ of $\mathfrak v$:
\[ 
v_i = \mathbf e_i \mathbf e_i^* + \mathbf e_4 \mathbf e_4^* \qquad \text{ for $i=1,\ldots,3$},
\]
and
\begin{eqnarray*}
v_4  = \mathbf e_1 \mathbf e_2^* + \mathbf e_2 \mathbf e_1^*, &
v_5  = \mathbf e_1 \mathbf e_3^* + \mathbf e_3 \mathbf e_1^*, &
v_6  = \mathbf e_1 \mathbf e_4^* + \mathbf e_4 \mathbf e_1^*,\\
v_7  = \mathbf e_2 \mathbf e_3^* + \mathbf e_3 \mathbf e_2^*, &
v_8  = \mathbf e_2 \mathbf e_4^* + \mathbf e_4 \mathbf e_2^*, &
v_9  = \mathbf e_3 \mathbf e_4^* + \mathbf e_4 \mathbf e_3^*\,.
\end{eqnarray*}

\subsection{The figure eight knot}

In this section we explain the computations to show that the figure eight knot exterior is infinitesimally projectively rigid.

Let $\Gamma$ be the fundamental group of the figure eight knot exterior. We fix a presentation of $\Gamma$:
\begin{equation}\label{eq:presfig8}
\Gamma=\langle x,\, y \mid x y^{-1} x^{-1} y x y^{-1} x y x^{-1} y^{-1} \rangle. 
\end{equation}
where $x$ and $y$ represent meridians. 

By Corollary~\ref{cor:dim}, it suffices to show that $\dim H^1(\Gamma,\mathfrak{v}_{\Ad\rho_0})=1$.

We start with a holonomy representation of the complete structure in $SL_2(\mathbf C)$
\cite{Riley}:
$$
x\mapsto   \left( \begin {array}{cc} 1&1\\\noalign{\medskip}0&1\end {array}
\right)
\qquad
y\mapsto  \left(       \begin {array}{cc} 1&0\\\noalign{\medskip}\frac{1-i\sqrt {3}}2 &
1\end {array}         \right),
$$

Using for instance the construction described in \cite{CLTEM}, the representation in $PSO(3,1)$ is given by:
$$
\rho_0(x)= 
\begin{pmatrix}
1&0&0&0\\
0&1&-1&1\\
0&1&1/2&1/2\\
0&1&-1/2&3/2
\end{pmatrix}
%
\qquad
\rho_0(y)=  
\begin{pmatrix}
1&0& \sqrt {3}/2& \sqrt {3}/2\\
0&1&1/2&1/2\\
-\sqrt {3}/2&-1/2&1/2&-1/2\\
\sqrt {3}/2&1/2&1/2&3/2
\end{pmatrix}
$$
Notice that the holonomy of $x$ and $y$ have a fixed point in the light cone, which are respectively:
$$
v_+ = 
\begin{pmatrix}
0 \\ 0 \\ 1 \\ 1
\end{pmatrix}
\quad
\textrm{ and }
\quad
v_- =
\begin{pmatrix}
0 \\ 0 \\ -1 \\ 1
\end{pmatrix}.
$$
With respect to the basis $\{v_1,\ldots,v_9\}$ for $\mathfrak{v}$
the adjoint representation is given by:
$$
\Ad\rho_0(x)= \left(
 \begin {matrix}
1&0&0&0&0&0&0&0&0\\[1ex]
1&2&2&0&0&0&-2&2&-2\\[1ex]
\frac14&\frac54&\frac12&0&0&0&1&1&\frac12\\[1ex]
0&0&0&1&-1&-1&0&0&0\\[1ex]
0&0&0&1&\frac12&-\frac12&0&0&0\\[1ex]
0&0&0&-1&\frac12&\frac32&0&0&0\\[1ex]
\frac12&\frac32&0&0&0&0&-\frac12&\frac32&0\\[1ex]
\frac32&\frac52&2&0&0&0&-\frac32&\frac52&-2\\[1ex]
\frac34&\frac74&\frac12&0&0&0&0&2&\frac12
\end{matrix} \right)  
$$
and
$$\small
\Ad\rho_0(y)= \left(
\begin{matrix}
\frac74&\frac34&\frac32&0&\sqrt {3}&-\sqrt {3}&0&0&\frac32\\[1ex]
\frac14&\frac54&\frac12&0&0&0&1&1&\frac12\\[1ex]
1&\frac12&\frac12&\frac{\sqrt {3}}{2}&-\frac{\sqrt {3}}2&
-\frac{\sqrt{3}}2&-\frac12&\frac12&-\frac12\\[1ex]
\frac{\sqrt {3}}4&\frac{\sqrt {3}}4&\frac{\sqrt {3}}2&1&1/2&-1/2&
\frac{\sqrt {3}}2&\frac{\sqrt {3}}2&\frac{\sqrt {3}}2\\[1ex]
-\frac{3\sqrt {3}}4&-\frac{\sqrt {3}}4&0&-\frac12&-\frac14&
\frac54&-\frac{\sqrt {3}}4&-\frac{\sqrt {3}}4&0\\[1ex]
-\frac{5\sqrt {3}}4&-\frac{3\sqrt {3}}4&-\sqrt {3}&-\frac12&-\frac54&\frac94&-\frac{\sqrt {3}}4&-\frac{\sqrt {3}}4&-\sqrt {3}\\[1ex]
-\frac14&-\frac34&0&-\frac{\sqrt {3}}2&-\frac{\sqrt {3}}4&
\frac{\sqrt {3}}4&\frac14&-\frac34&0\\[1ex]
\frac34&\frac54&1&\frac{\sqrt {3}}2&\frac{\sqrt {3}}4&
-\frac{\sqrt {3}}4&\frac34&\frac74&1\\[1ex]
-\frac32&-1&-\frac12&-\frac{\sqrt {3}}2&0&\sqrt {3}&0&-1&\frac12
\end {matrix}\right).
$$
The cohomology group $H^1(\Gamma;\mathfrak{v})$ is computed as the quotient $Z^1/B^1$, where $Z^1=Z^1(\Gamma\; \mathfrak{v}_{\Ad\rho_0})$ is the space of cocycles and 
$B^1=B^1(\Gamma\; \mathfrak{v}_{\Ad\rho_0})$ the space of coboundaries, cf. Subsection~\ref{subsection:groupcohomology}.

Since $\mathfrak{v}$ has no element globally invariant by $\Gamma$, $\dim B^1=\dim \mathfrak{v}= 9$. We claim that $\dim Z^1=10$. To compute this dimension, we use the 
isomorphism of $\mathbf R$--vector spaces:
$$
\begin{array}{rcl}
Z^1 & \leftrightarrow &\{ (a,b)\in \mathfrak{v}^2\mid \tfrac {\partial w}{\partial x}\cdot a + \tfrac {\partial w}{\partial y}\cdot b =0 \} \\
d & \leftrightarrow & (d(x),d(y))
\end{array},
$$
where $w=x y^{-1} x^{-1} y x y^{-1} x y x^{-1} y^{-1}$ is the relation in the presentation of $\Gamma$, and
$\frac {\partial w}{\partial x}$, $\frac {\partial w}{\partial y}$ stand for the Fox derivatives \cite{LubotzkyMagid}:
\begin{eqnarray*}
\frac {\partial w}{\partial x} & = &
  1-xy^{-1}x^{-1}+x y^{-1} x^{-1} y+ y x y^{-1} x^{-1}- y,
\\
\frac {\partial w}{\partial y} & = & 
   -x y^{-1}+ x y^{-1} x^{-1}- y x y^{-1} x^{-1}+y x y^{-1} -1.
\end{eqnarray*}
Thus, $Z^1$ is isomorphic to the kernel of the linear map from $\mathfrak{v}\times \mathfrak{v}$ to $\mathfrak{v}$ with matrix:
\begin{equation}
\label{eqn:boundary}
\left(
\Ad\rho_0(\frac {\partial w}{\partial x}) \; ,\;
\Ad\rho_0(\frac {\partial w}{\partial y})
\right).
\end{equation}

One can check that this matrix has rank 8, by means of an elementary but tedious computation.
Hence $\dim Z^1=10$, as claimed.

To prove Proposition~\ref{prop:fig8explicit} we need to show:

\begin{remark}\label{rem:flex}
The longitude is  a rigid slope.
\end{remark}

With this remark, Proposition~\ref{prop:fig8explicit} is just an application of Proposition~\ref{prop:flexingslope}.
To prove that the longitude is a rigid slope, we need to analyze more carefully the previous computation.

By looking at the kernel of matrix~(\ref{eqn:boundary}), we choose one  cocycle $d$ determined by:
$$
d(x)= \begin {pmatrix}
0&0&0&0\\
0&0&-3&-1\\
0&-3&0&0\\
0&1&0&0
\end {pmatrix}
\quad\textrm{ and }\quad
d(y)= 0
\,.$$
Let $l= y x^{-1} y^{-1} x^2 y^{-1} x^{-1} y$ be the longitude that commutes with $x$. Then, by Fox calculus,
$$
d(l)=\begin {pmatrix} 
60&-4\,\sqrt {3}&60\,\sqrt {3}&-68\,\sqrt {3}\\
-4\,\sqrt {3}&-4&-12&12\\
60\,\sqrt {3}&-12&178&-206\\
68\,\sqrt {3}&-12&206&-234
\end {pmatrix}\,.
$$
To see that $d$ restricted to the cyclic group generated by $l$ is not a coboundary, following the proof of Lemma~\ref{lem:angles},
we must find an invariant element 
$a\in \mathfrak{v}^{\Ad\rho_0(l)}$ such that 
$B(d(l),a)\neq 0$. Since:
$$
\rho_0(l)= \begin {pmatrix} 
1&0&-2\,\sqrt {3}&2\,\sqrt {3}\\
0&1&0&0\\
2\,\sqrt {3}&0&-5&6\\
2\,\sqrt {3}&0&-6&7
\end {pmatrix} ,
$$
following again the proof of Lemma~\ref{lem:angles}, we choose
$$
a=\begin{pmatrix}
  -1 & & & \\
   & 3 & & \\
   & & -1 & \\
   & & & -1
 \end{pmatrix},
$$
and we have that $B(d(l),a)=-16\neq 0$.

\subsection{Orbifolds with the figure eight knot as the singular locus}\label{sec:orbi}

Let ${\mathcal O}_n$ denote the orbifold with underlying space $S^3$, singular locus $\mathrm{Sing}({\mathcal O}_n)$ the 
figure eight knot and ramification index $n$. The orbifold ${\mathcal O}_n$ is hyperbolic for $n\geq4$. 
Note that the orbifold ${\mathcal O}_n$ has a finite cyclic covering $\widetilde{{\mathcal O}_n} \to {\mathcal O}_n$ where 
$M_n :=\widetilde{{\mathcal O}_n}$ is the so called Fibonacci manifold which is widely studied in the literature \cite{HKM}.

The aim of this 
subsection is to prove Proposition~\ref{prop:nonrigid}, which states that ${\mathcal O}_n$ is not locally 
projectively rigid for sufficiently large $n$, and that its deformation space is a curve.
This will be proved in Paragraph~\ref{subsubsection:deform}, after three paragraphs of preliminary results.

As before,  
$\Gamma_0 := \Gamma = \pi_1({\mathcal O}_n\setminus \mathrm{Sing}({\mathcal O}_n))$ 
denotes the fundamental group of the figure eight knot exterior, so that
\[ 
\Gamma_{1/n} := \pi_1^\mathrm{orb}({\mathcal O}_n) \cong \Gamma / \langle m^n\rangle,
\]
for $m\in\Gamma$ representing a meridian. Note that there exists an exact sequence
\[ 0\to\pi_1(M_n)\to \pi_1^\mathrm{orb}({\mathcal O}_n) \to \mathbf Z / n\mathbf Z \to 0\,.\]

\subsubsection{A finite order automorphism of $\Gamma_0$}

The figure eight knot is amphicheiral and hence there exists an automorphism of $\Gamma_0$ preserving  the longitude and sending the meridian to its inverse. 
Such an automorphism $\varphi_0\co\Gamma_0\to\Gamma_0$ is given by
\[ \varphi_0(x) = x^{-1} \text{ and } \varphi_0(y)= y x^{-1} y^{-1} x y^{-1}.\]
By direct calculation using Presentation~(\ref{eq:presfig8}) and 
the meridian/longitude pair
$m=x$ and $l= y x^{-1} y^{-1} x^2 y^{-1} x^{-1} y$, one checks that $\varphi_0$ is an automorphism and that
 \[ \varphi_0(m)= m^{-1} \text{ and }\varphi_0(l) =l.\]
Hence $\varphi_0$ induces automorphisms
\[\varphi_{1/n}\co \Gamma_{1/n}\to\Gamma_{1/n}.\]

Let $\rho_0\co\Gamma_0\to PSO(3,1)$ and
$\rho_{1/n}\co\Gamma_{1/n}\to PSO(3,1)$ denote the holonomy representations. Then by Mostow--Prasad rigidity there exists a 
unique element $A_{1/n}\in PSO(3,1)$ such that
\begin{equation}\label{eq:Ad}
\rho_{1/n}\circ\varphi_{1/n} = \Ad_{A_{1/n}}\circ \rho_{1/n} 
\end{equation}
for $n\geq 4$, including $0 = 1/\infty$.

For any group homomorphism $\varphi\co\Gamma\to \Gamma'$ and any $\Gamma'$--module $\mathfrak a'$ we denote by 
$  \mbox{}^{\varphi}\mathfrak a'$ the 
$\Gamma$--module with underlying set $\mathfrak a'$ and the $\Gamma$ action $\gamma\circ a' = \varphi(\gamma)\circ a'$. 
It is easy to check that $\varphi$ induces a map
\[ f^*\co H^*(\Gamma',\mathfrak a')\to H^*(\Gamma,  \mbox{}^{\varphi}\mathfrak a')\]
(see \cite[III.8]{Bro82}).
Now any $\Gamma$--module $\mathfrak a$ and any morphism of 
$\Gamma$--modules 
$\alpha\co  \mbox{}^{\varphi}\mathfrak a'\to \mathfrak a$ there is an induced map in cohomology  
$(\varphi,\alpha)^*\co H^*(\Gamma,\mathfrak a)\to H^*(\Gamma,\mathfrak a)$ given by
\[ (\varphi,\alpha)^* = \alpha_* \circ \varphi^* .\]
By Proposition III.8.3 from \cite{Bro82}, inner automorphisms of $\Gamma$ induce the identity
on cohomology.

Now Equation~(\ref{eq:Ad}) tells us that 
$\Ad_{A_{1/n}^{-1}}\co \mbox{}^{\varphi_{1/n}} \mathfrak{v}_{\rho_{1/n}}
\to \mathfrak{v}_{\rho_{1/n}}$ is a $\Gamma_{1/n}$--module morphism  and hence there is an induced map
\[ \varphi_{1/n}^* := (\varphi_{1/n},\Ad_{A_{1/n}^{-1}})^*\co 
H^1(\Gamma_{1/n},\mathfrak{v}_{\rho_{1/n}}) \to
H^1(\Gamma_{1/n},\mathfrak{v}_{\rho_{1/n}}).\]
To work explicitly with this map, we work with cocycles $Z^1(\Gamma_{1/n},\mathfrak{v}_{\rho_{1/n}})$ 
i.e. maps $z:\Gamma_{1/n}\to \mathfrak{v}_{\rho_{1/n}}$ satisfying the cocycle relation
(Subsection~\ref{subsection:groupcohomology}). We also denote $ \varphi_{1/n}^* :Z^1(\Gamma_{1/n},\mathfrak{v}_{\rho_{1/n}})\to Z^1(\Gamma_{1/n},\mathfrak{v}_{\rho_{1/n}})$
the induced map on cocycles, that is given by
 $$
\varphi_{1/n}^*(z) = \Ad_{A_{1/n}^{-1}}\circ z\circ \varphi_{1/n}, \qquad \forall z\in Z^1(\Gamma_{1/n},\mathfrak{v}_{\rho_{1/n}}).
$$

In the sequel we shall compute the action of $\varphi^*_{1/n}$ first on the homology $H^*(\partial M,\mathfrak{v}_{\rho_{1/n}}) $ and then we shall deduce its action on $H^*(\Gamma_{1/n},\mathfrak{v}_{\rho_{1/n}})$. 

For $4\leq n <\infty$, we have a natural isomorphism 
\[ H^*(\partial M,\mathfrak{v}_{\rho_{1/n}}) \cong
H^*(\partial M,\mathbf{R}) \otimes 
\mathfrak{v}^{\rho_{1/n}(\pi_1 \partial M)} \]
(see Lemma~\ref{lem:rankone}). For $n=\infty$ Lemma~\ref{lem:angles} applies and hence
\[
i_l^*\oplus i_m^*\co H^1(\partial M,\mathfrak{v}_{\rho_{0}})\to
H^1(l,\mathfrak{v}_{\rho_{0}})\oplus H^*(m,\mathfrak{v}_{\rho_{0}})
\]
is injective. Moreover $\rank (i_l^*)=\rank (i_m^*)=1$.

In the sequel let $\varphi^*\co H^*(\partial M,\mathbf{R})\to H^*(\partial M,\mathbf{R})$ denote the the map induced in the untwisted cohomology with real coefficients.

\begin{lemma}\label{lem:2}
For $n<\infty$, with respect to the isomorphism
$H^*(\partial M,\mathfrak{v}_{\rho_{1/n}}) \cong
H^*(\partial M,\mathbf{R}) \otimes 
\mathfrak{v}^{\rho_{1/n}(\pi_1 \partial M)}
$,
 the isomorphism $\varphi^*_{1/n}$ on  cohomology
 is given by
\[ \varphi^*_{1/n}= \varphi^* \otimes 
\Id_{\mathfrak{v}^{\rho_{1/n}(\pi_1 \partial M)}}.
\]
For $n=\infty$, we have
\[ i_l^*\circ \varphi^*_0 = i^*_l \text{ and }
i_m^*\circ \varphi^*_0 = - i^*_m\,.\]
\end{lemma}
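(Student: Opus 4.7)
The key preliminary observation is that $\varphi_0$ preserves the peripheral subgroup $\pi_1\partial M$, acting on generators by $m\mapsto m^{-1}$, $l\mapsto l$; this descends to $\varphi_{1/n}$, and consequently $A_{1/n}$ normalizes $\rho_{1/n}(\pi_1\partial M)$ in $PO(3,1)$. In particular, $\Ad_{A_{1/n}^{-1}}$ preserves both $\mathfrak{v}^{\rho_{1/n}(\pi_1\partial M)}$ and its Killing-orthogonal complement, and (using $A_0^{-1}\rho_0(m)A_0=\rho_0(m)^{-1}$) it also preserves the coboundary subspaces $(\rho_{1/n}(m)-1)\mathfrak{v}$ and $(\rho_{1/n}(l)-1)\mathfrak{v}$.

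For the finite case $n<\infty$, I would use the tensor decomposition from Lemma~\ref{lem:rankone}(i). Since $\Ad_{A_{1/n}^{-1}}$ preserves the orthogonal splitting $\mathfrak{v}=\mathfrak{v}^{\rho_{1/n}(\pi_1\partial M)}\oplus (\mathfrak{v}^{\rho_{1/n}(\pi_1\partial M)})^\perp$ and the second summand contributes trivially to cohomology, $\varphi^*_{1/n}$ is forced to factor as $\varphi^*\otimes \Ad_{A_{1/n}^{-1}}|_{\mathfrak{v}^{\rho_{1/n}(\pi_1\partial M)}}$. Since the invariant subspace is $1$-dimensional (Lemma~\ref{lem:invkilling}), $\Ad_{A_{1/n}^{-1}}$ acts by $\pm 1$. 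The sign is $+1$: in coordinates adapted to the common rotation axis of the peripheral group, the generator is $V_0=\diag(1,1,-1,-1)$, which is preserved under conjugation by any element of $PO(3,1)$ stabilizing the axis setwise (this is essentially the same calculation as $\Ad_{A_0^{-1}}(v_-v_-^*)=\lambda^{-2}v_-v_-^*>0$ at the cusp). This gives $\varphi^*_{1/n}=\varphi^*\otimes\Id$, as claimed.

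For the case $n=\infty$, the cohomology $H^1(\partial M,\mathfrak{v}_{\rho_0})$ is $2$-dimensional, and by Lemma~\ref{lem:angles} the cocycles $z_l$ and $z_m$ (with $z_m(m)=a_l$, $z_m(l)=0$, $z_l(l)=a_m$, $z_l(m)=0$) form a basis. Using the cocycle identity $z(g^{-1})=-\rho_0(g)^{-1}z(g)$ together with $\varphi_0(l)=l$, I would compute:
\[\varphi_0^*(z_m)(l)=\Ad_{A_0^{-1}}z_m(l)=0,\qquad \varphi_0^*(z_l)(m)=-\Ad_{A_0^{-1}}\rho_0(m)^{-1}z_l(m)=0,\]
so $\varphi_0^*$ is diagonal in the basis $\{z_l,z_m\}$: write $\varphi_0^*(z_l)=\alpha z_l$ and $\varphi_0^*(z_m)=\delta z_m$. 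Combining the cup product argument (the cup pairing on $H^1(\partial M,\mathfrak{v}_{\rho_0})$ is non-degenerate by Lemma~\ref{lem:angles}, and $\varphi_0^*$ acts on $H^2(\partial M,\mathbf R)$ by $-1$ because the diffeomorphism $m\mapsto m^{-1},l\mapsto l$ of $T^2$ has determinant $-1$) yields $\alpha\delta=-1$; the fact that $\varphi_0^2$ is inner gives $\alpha,\delta\in\{\pm 1\}$; hence $\{\alpha,\delta\}=\{+1,-1\}$. To fix the assignment, I would evaluate $\varphi_0^*(z_m)(m)=-\Ad_{A_0^{-1}}\rho_0(m)^{-1}a_l$ modulo $(\rho_0(m)-1)\mathfrak{v}$, reducing it to $-\Ad_{A_0^{-1}}a_l$, and verify $[\Ad_{A_0^{-1}}a_l]=[a_l]$ by working in the Heisenberg model of Subsection~\ref{subsection:Heisenberg}: realize $A_0$ as a reflection across the $v_l$-axis (which exists because amphicheirality forces the cusp torus to be rectangular), and check that its action on the infinitesimal $U(2)$-rotation $a_l$ differs from $a_l$ by a Heisenberg term lying in $(\rho_0(m)-1)\mathfrak{v}$. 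This gives $\delta=-1$ and therefore $\alpha=+1$.

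The main obstacle is the final sign determination: one must either carry out the Heisenberg-model computation above, or find a conceptual replacement (for instance, exploiting the continuous deformation of $A_{1/n}$ and of the relevant invariant vector as $n\to\infty$, together with the sign computation already done in the finite case, to transport the identity action across the Dehn surgery boundary). The rest of the argument consists of routine cocycle manipulations and cup-product bookkeeping.
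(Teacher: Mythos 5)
Your overall strategy is sound, and parts of it are genuinely different from (and slightly slicker than) the paper's proof: in the finite case the paper computes a normal form for $A_{1/n}$, whereas you only use that $A_{1/n}$ stabilizes the axis of the peripheral group setwise, hence is block diagonal for the splitting $\langle e_1,e_2\rangle\oplus\langle e_3,e_4\rangle$ and so fixes $V_0=\diag(1,1,-1,-1)$ under conjugation --- that is correct and suffices. In the cusped case your diagonality step ($\varphi_0^*[z_m]=\delta[z_m]$, $\varphi_0^*[z_l]=\alpha[z_l]$) is essentially the paper's, and the cup-product relation $\alpha\delta=-1$ is a nice shortcut the paper does not use (the paper computes both signs directly by pairing against $a_m$ and $a_l$).

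However, two assertions in your $n=\infty$ argument are false as stated. First, $\varphi_0^2$ is \emph{not} inner: the paper's explicit computation gives $A_0=M\,\diag(1,-1,1,1)$ with $M$ a nontrivial parabolic (determined by where $\varphi_0$ sends $y$, i.e.\ where $A_0$ sends the fixed point $v_-$ of $\rho_0(y)$), and then $A_0^2$ is the parabolic translation by \emph{half} the longitude, which does not lie in $\rho_0(\Gamma_0)$; so $\varphi_0^2$ represents the nontrivial symmetry acting trivially on peripheral homology. Fortunately this step is redundant: once $\delta=-1$ is established, $\alpha\delta=-1$ gives $\alpha=+1$ without knowing the eigenvalues are $\pm1$, so simply delete it. Second, and this sits in the decisive step you leave as a plan, $A_0$ is not a reflection across the $l$-axis: on the cusp it is a glide reflection (the linear reflection composed with the parabolic $M$), and this nontrivial parabolic part is exactly why the first assertion fails; your rectangularity remark only shows that \emph{some} reflection inducing $m\mapsto m^{-1}$, $l\mapsto l$ exists, not that $A_0$ equals it. Your computation can nevertheless be completed, and in a way that needs no global input: any $A_0$ fixing $v_+$ and conjugating $\rho_0(m),\rho_0(l)$ to $\rho_0(m)^{-1},\rho_0(l)$ has the form (parabolic at $v_+$)$\cdot\diag(1,-1,1,1)$, the reflection fixes $a_l$, and the parabolic correction $\Ad_{A_0^{-1}}a_l-a_l$ is an infinitesimal parabolic at $v_+$, hence lies in $(\Ad\rho_0(m)-1)\mathfrak v$ because that coboundary space is the Killing-orthogonal complement of $\mathfrak v^{\rho_0(m)}$, whose image in $\mathfrak{su}(3,1)$ sits in the stabilizer algebra $\mathfrak g_+$ (as in Lemma~\ref{lem:invparabolic}), while parabolic directions form the radical of $\mathfrak g_+$ (Lemma~\ref{lem:orthogonalparabolic}). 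You assert this membership but must justify it --- that, rather than the bookkeeping, is the heart of the lemma; the paper instead pins down $A_0$ explicitly and evaluates the two Killing pairings $B(a_m,\varphi_0^*z_m(m))=32$ and $B(a_l,\varphi_0^*z_l(l))=-32$.
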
 
\begin{proof}
If $n\geq 4$ then $\rho_{1/n}(m)$ is an elliptic element and 
$\rho_{1/n}(l)$ is a pure hyperbolic translation. This can be seen for example by using the trace identity
\[ \tr\rho(l) = \tr^4 \rho(m) - 5 \tr^2 \rho(m) + 2,\]
which holds for every irreducible representation $\rho\co\Gamma\to SL(2,\mathbf C)$ (see for example \cite[p.~113]{Porti}).
Hence up to conjugation we may assume that
\begin{align*}
\rho_{1/n} (m) &=
\begin{pmatrix}
\cos (2\pi/n) & -\sin (2\pi/n) & 0 & 0 \\
\sin (2\pi/n) & \cos( 2\pi/n) & 0 & 0 \\
0 & 0 & 1 &0 \\
0 & 0 & 0 & 1
\end{pmatrix} \intertext{ and }
\rho_{1/n} (l) &=
\begin{pmatrix}
1 & 0 & 0 &0 \\
0 & 1 & 0 & 0\\
0 & 0 & \cosh (\lambda_n) & \sinh (\lambda_n)  \\
0 & 0 & \sinh (\lambda_n) & \cosh(\lambda_n)\\
\end{pmatrix},
\end{align*}
where $\lambda_n$ is the translation length of the holonomy of $l$, which is the  length of the geodesic singular locus. 
With this normalization we obtain 
\[ \mathfrak{v}^{\rho_{1/n}(\pi_1 \partial M)} = \Big\langle
\left(\begin{smallmatrix}
1 & 0 & 0 &0 \\
0 & 1 & 0 & 0\\
0 &0 & -1 & 0 \\
0 &0 & 0 & -1 
\end{smallmatrix}\right)\Big\rangle
\]
and
\[
A_{1/n}= 
\left(\begin{smallmatrix}
1 & 0 & 0 &0 \\
0 & -1 & 0 & 0\\
0 &0 & 1 & 0 \\
0 &0 & 0 & 1 
\end{smallmatrix}\right)
\begin{pmatrix}
R_\alpha &0\\
0 & T_\eta 
\end{pmatrix}
\]
where $R_\alpha$ is a rotation of angle $\alpha\in\mathbf R$ and
$T_\eta$ is a hyperbolic translation of length $\eta\in\mathbf R$.
The actual values of $\alpha$ and $\eta$ are not needed since the above form of $A_{1/n}$ already implies that it acts trivially on
$\mathfrak{v}^{\rho_{1/n}(\pi_1 \partial M)}$ i.e.\
\[
\Ad_{A_{1/n}}\big|_{\mathfrak{v}^{\rho_{1/n}(\pi_1 \partial M)}} 
=\Id_{\mathfrak{v}^{\rho_{1/n}(\pi_1 \partial M)} },
\]
and the first assertion of the lemma follows.

In order to prove the second assertion recall that 
\[\rho_0(m)=\rho_0(x) =
\exp\left(\begin{smallmatrix}
0 & 0 & 0 &0 \\
0 & 0 & -1 & 1\\
0 & 1 & 0 & 0 \\
0 & 1 & 0 & 0
\end{smallmatrix}\right)\text{ and }
\rho_0(l)=
\exp\left(\begin{smallmatrix}
0 & 0 & -2\sqrt3 &2\sqrt3 \\
0 & 0 & 0 & 0\\
2\sqrt3 &0 & 0 & 0 \\
2\sqrt3 &0 & 0 & 0 
\end{smallmatrix}\right)\,.
\]
Hence $A_{0}= M
\left(\begin{smallmatrix}
1 & 0 & 0 &0 \\
0 & -1 & 0 & 0\\
0 &0 & 1 & 0 \\
0 &0 & 0 & 1 
\end{smallmatrix}\right)$
for some $M$ in the parabolic group that fixes
$v_+=\operatorname{Fix}(\langle \rho_0(m),\rho_0(l)\rangle )$, 
and that maps $v_-$, the point fixed by the parabolic group containing $\rho_0(y)$, to 
$\rho_0(yx^{-1}) \cdot v_-$, because $\varphi_0(y)= y x^{-1} y^{-1} x y^{-1}$.
With respect to our normalization we have
\[ v_+=\begin{pmatrix}
    0 \\ 0 \\ 1 \\ 1
   \end{pmatrix}
,\quad
v_-=\begin{pmatrix}
    0 \\ 0 \\ -1 \\ 1
   \end{pmatrix} \quad\text { and }\quad
\rho_0(yx^{-1}) \cdot v_- =
 \begin{pmatrix}
    \sqrt 3 \\ -1 \\ 0 \\ 2
   \end{pmatrix} . 
\] 
Hence
\begin{align*}
M &= \exp
\begin{pmatrix}
0 & 0 & -\sqrt3/2 &\sqrt3/2 \\
0 & 0 & 1/2 & -1/2\\
\sqrt3/2 & -1/2 & 0 & 0 \\
\sqrt3/2 & -1/2 & 0 & 0
\end{pmatrix} 
\intertext{ and }
A_0 &=
\begin{pmatrix}
1 & 0 & -\sqrt3/2 &\sqrt3/2 \\
0 & -1 & 1/2 & -1/2\\
\sqrt3/2 & 1/2 & 1/2 & 1/2 \\
\sqrt3/2 & 1/2 & -1/2 & 3/2
\end{pmatrix}.
\end{align*}

Let us consider the two cocycles $z_m,z_l\co\pi_1(\partial M)\to\mathfrak{v}_{\rho_0}$ which were constructed in the proof of Lemma~\ref{lem:angles}:
$z_m\co  \pi_1(\partial M)  \to  \mathfrak{v}_{\rho_0}$ given by
$z_m(l)= 0$ and  $z_m(m) = a_{l}$ where
\[
a_l = 
\begin{pmatrix}
- 1 & & & \\ 
& 3 & & \\
& & -1 & \\
& & & -1
\end{pmatrix} \in  \mathfrak v,
\]
and
$z_l\co \pi_1(\partial M)  \to  \mathfrak{v}_{\rho_0}$ given by
$z_l(l)= a_m$ and  $z_l(m) = 0$
where
\[
a_m=
\begin{pmatrix}
3 & & &  \\ 
& -1 & & \\
& & -1 & \\
& & & -1
\end{pmatrix} \in  \mathfrak v\,.\]
These cocycles satisfy:
\begin{alignat*}{2}
i_m^*([z_m]) &\neq 0, \quad & \quad i_l^*([z_m]) &= 0, \\
i_m^*([z_l]) &= 0, \quad & \quad i_l^*([z_l]) &\neq 0\,.
\end{alignat*}
Moreover we have
\begin{align*}
\varphi_0^* z_m (m) &= \Ad_{A_0^{-1}} z_m(m^{-1})\\ 
&= - \Ad_{A_0^{-1}} \Ad_{\rho_0(m)^{-1}} a_l\\
&= - \begin{pmatrix}
-1 & 0& 0& 0 \\ 
0& 3 & 2 & -2 \\
0& 2& 0 & -9\\
0& 2& 9& -2
\end{pmatrix} 
\intertext{and}
\varphi_0^* z_m (l) &= \Ad_{A_0^{-1}} z_m(l) =0\,.
\end{align*}

Since 
$$\langle i_m^*\varphi_0^* z_m, a_m\rangle =
B(a_m, \varphi_0^*z_m (m))=32=- B(a_m,a_l)$$ 
it follows that $i_m^* \varphi_0^*z_m\sim -i_m^* z_m$ (see the argument at the end of  the proof of Lemma~\ref{lem:angles}). On the other hand we have:
\[ \varphi_0^* z_l (m)=0 \text{ and } 
\varphi_0^* z_l (l) = \Ad_{A_0^{-1}}(a_m) =
\begin{pmatrix}
3 & 0& -2\sqrt 3& 2\sqrt 3 \\ 
0& -1 & 0 &0 \\
-2\sqrt 3& 0 & 2 & -3\\
-2\sqrt 3& 0 & 3 &- 4
\end{pmatrix} .
\]
Since $B(a_l, \varphi_0^* z_l (l) ) = -32 =B(a_l,a_m)$ it follows that
$i_l^*\varphi_0^* z_l \sim i_l^*z_l$. 
\end{proof}

\begin{cor}\label{cor:3}
For sufficiently large $n\in\mathbf{N}$ the composition
\[
H^1(M, \mathfrak{v}_{\rho_{1/n}}) \hookrightarrow
H^1(\partial M, \mathfrak{v}_{\rho_{1/n}})\to
H^1(m, \mathfrak{v}_{\rho_{1/n}})
\]
is the zero map.
\end{cor}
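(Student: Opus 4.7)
The plan is to exploit the amphicheirality symmetry $\varphi_0$ of $\Gamma_0$ (together with Mostow--Prasad rigidity) to pin down the image
\[I_{1/n} := \image\bigl(H^1(M,\mathfrak v_{\rho_{1/n}}) \to H^1(\partial M,\mathfrak v_{\rho_{1/n}})\bigr)\]
as a specific eigenspace of $\varphi^*_{1/n}$. By Lemma~\ref{lem:rankone}(ii), $I_{1/n}$ is a $1$-dimensional Lagrangian in the $2$-dimensional cohomology of the torus. Since $\varphi_0$ preserves $\pi_1\partial M$ (it sends $m\mapsto m^{-1}$ and $l\mapsto l$), naturality applied to the Mostow--Prasad relation $\rho_{1/n}\circ\varphi_0=\Ad_{A_{1/n}}\circ\rho_{1/n}$ shows that the induced involution $\varphi^*_{1/n}$ preserves $I_{1/n}$, so $I_{1/n}$ is one of the two $\pm1$ eigenspaces.

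The next step is to identify these eigenspaces explicitly. For $n<\infty$, Lemma~\ref{lem:2} writes $\varphi^*_{1/n}$ as $\varphi^*\otimes\Id$ under the splitting $H^1(\partial M,\mathfrak v_{\rho_{1/n}})\cong H^1(\partial M,\mathbf R)\otimes \mathfrak v^{\rho_{1/n}(\pi_1\partial M)}$. Since $\varphi^*(\hat l)=\hat l$ and $\varphi^*(\hat m)=-\hat m$, the $+1$ eigenspace is $\hat l\otimes \mathfrak v^{\rho_{1/n}(\pi_1\partial M)}=\ker i_m^*$ and the $-1$ eigenspace is $\hat m\otimes \mathfrak v^{\rho_{1/n}(\pi_1\partial M)}=\ker i_l^*$. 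For $n=\infty$, the identities $i_l^*\varphi_0^*=i_l^*$ and $i_m^*\varphi_0^*=-i_m^*$, combined with injectivity of $i_l^*\oplus i_m^*$ from Lemma~\ref{lem:angles}, yield the same description: $\varphi_0^*$ acts as $+\Id$ on $\ker i_m^*$ and as $-\Id$ on $\ker i_l^*$.

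To single out the correct eigenspace, I use the flexing property at $n=\infty$: by Remark~\ref{rem:flex} the longitude is a flexing slope, so $i_l^*$ is nontrivial on $I_0$, forcing $I_0\neq \ker i_l^*$ and therefore $I_0=\ker i_m^*$. Both the family $u\mapsto I_u$ and the $\pm1$ eigenspaces $\ker i_m^*,\ker i_l^*\subset H^1(\partial M,\mathfrak v_{\rho_u})$ vary analytically in the Thurston parameter $u$, using the analytic cocycles of Lemma~\ref{lem:cohomologyclasses} and the analytic invariant element $a_u$ of Lemma~\ref{lem:vutaylor}. Since $I_u$ must coincide with one of two eigenspaces for each small $u$ and equals the $+1$ eigenspace at $u=0$, continuity forces $I_{u}=\ker i_m^*$ throughout a neighborhood of $0$. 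In particular, for $n$ sufficiently large, $I_{1/n}\subset\ker i_m^*$, which is exactly the statement of Corollary~\ref{cor:3}.

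The main technical point is the continuity of $I_u$ across the cusped limit $u=0$, where the boundary holonomy changes type from (elliptic times) hyperbolic to parabolic and the two cohomology descriptions in Lemma~\ref{lem:2} (for $n<\infty$ versus $n=\infty$) must be matched up. This is precisely what Lemma~\ref{lem:vutaylor} handles, providing an analytic generator of $\mathfrak v^{\rho_u(\pi_1\partial M)}$ extending through the parabolic limit and making the degeneration of the bases transparent.
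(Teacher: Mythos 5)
Your overall strategy is the same as the paper's (the amphicheiral symmetry, the Mostow--Prasad element $A_{1/n}$, Lemma~\ref{lem:2}, and the flexing slope at the complete structure to fix a sign), and the first three steps are correct: $\varphi^*_{1/n}$ preserves $I_{1/n}$, the identification of the $\pm1$ eigenspaces with $\ker i_m^*$ and $\ker i_l^*$ (via Lemma~\ref{lem:2} for $n<\infty$ and Lemma~\ref{lem:angles} for $n=\infty$) is right, and Remark~\ref{rem:flex} does force $I_0=\ker i_m^*$. The gap is in the final ``continuity'' transfer. A first, repairable, issue: the dichotomy ``$I_u$ is one of the two eigenspaces'' is only justified where the conjugating element exists, i.e.\ at $u=0$ and at the filling parameters $u_{1/n}$ (Mostow--Prasad rigidity applies to the fillings, not to an arbitrary nearby $\rho_u$), so the argument must run along the sequence $u_{1/n}\to 0$ rather than ``for each small $u$''. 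The serious issue is that, even along that sequence, your conclusion needs to know that the two lines $\ker i_m^*(u),\ \ker i_l^*(u)\subset H^1(\partial M;\mathfrak v_{\Ad\rho_u})$ converge, as $u\to0$, to $\ker i_m^*(0)$ and $\ker i_l^*(0)$ (or at least that $\ker i_l^*(u_{1/n})$ stays away from $I_0$), and this is not provided by Lemma~\ref{lem:vutaylor}.

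Indeed, with the normalization of Lemma~\ref{lem:vutaylor} the natural generators of those lines for $u\neq0$ are the product classes $\hat l\otimes a_u$ and $\hat m\otimes a_u$, i.e.\ the cocycles $(m\mapsto 0,\ l\mapsto a_u)$ and $(m\mapsto a_u,\ l\mapsto 0)$, and their limits at $u=0$ send a generator to the invariant infinitesimal parabolic $a_0$ (a multiple of $v_\pm v_\pm^*$). Such a limit cocycle is a \emph{coboundary}: writing $X_v=v_-v^*-vv_-^*$ for the infinitesimal Heisenberg translation in direction $v$, one has $[X_v,X_w]=-2\mathbf i\operatorname{Im}(v^*w)\,v_-v_-^*$, so taking $w$ purely imaginary, orthogonal to the meridional translation direction and not orthogonal to the longitudinal one, yields $b\in\mathfrak v$ with $(\Ad\rho_0(m)-1)b=0$ and $(\Ad\rho_0(l)-1)b=a_0$. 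Hence the representatives you follow converge to the zero class; the limits of the eigenspace lines are governed by higher-order terms (this is exactly why the $4\sinh^2(u/2)V_0$ term and the computations of Lemma~\ref{lem:Bsto0} are needed in the flexing-slope section), and the claimed continuity of the eigenspace decomposition across the parabolic degeneration requires a genuine additional argument. The paper's proof sidesteps this: it tracks the scalar $\pm1$ by which $\varphi^*_{1/n}$ acts on the one-dimensional $H^1(M;\mathfrak v_{\Ad\rho_{1/n}})$, using a continuously varying generator (Lemma~\ref{lem:cohomologyclasses}) and continuity of $A_{1/n}$, so the scalar is $+1$ for large $n$ because it is $+1$ at $u=0$; since $\varphi^*_{1/n}$ acts by $-\operatorname{Id}$ on the image in $H^1(m;\mathfrak v_{\Ad\rho_{1/n}})$, the composition must vanish --- no limit of boundary eigenspaces is ever taken. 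To fix your write-up, either carry out the renormalized expansion of $\ker i_m^*(u)$ and $\ker i_l^*(u)$ near $u=0$, or, more simply, rerun the continuity argument on the sign of the $\varphi^*$-action on $I_u$ itself, as the paper does.
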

\begin{proof}
The longitude $l$ is a rigid slope (see Remark~\ref{rem:flex}). 
Thus by Lemma~\ref{lem:2} the map 
$\varphi_0^*\co H^1(M, \mathfrak{v}_{\rho_{0}})\to H^1(M, \mathfrak{v}_{\rho_{0}})$ is the identity.

Next notice that for $n$ sufficiently large, by Corollary~\ref{cor:cohmologydimU}  we have an inclusion
\[
H^1(M, \mathfrak{v}_{\rho_{1/n}}) \hookrightarrow
H^1(\partial M, \mathfrak{v}_{\rho_{1/n}})\,.
\]
 The eigenvalues of  
 $\varphi_{1/n}^*\co H^1(\partial M, \mathfrak{v}_{\rho_{1/n}}) \to
H^1(\partial M, \mathfrak{v}_{\rho_{1/n}})$  are $\pm 1$ since 
the restriction of $\varphi_{1/n}$ to the subgroup generated by $m$ and $l$ is an involution. Moreover, $\varphi_{1/n}^*$ preserves
$H^1(M, \mathfrak{v}_{\rho_{1/n}}) \hookrightarrow
H^1(\partial M, \mathfrak{v}_{\rho_{1/n}})$ and hence the induced map $\varphi_{1/n}^*$ on $H^1(M, \mathfrak{v}_{\rho_{1/n}})$ is 
$\pm\Id$ and by continuity this restriction is the identity.

On the other hand we have $\varphi_{1/n}(m) = m^{-1}$, hence by Lemma~\ref{lem:2} 
and Lemma~\ref{lem:rankone},
$\varphi_{1/n}^*$ induces  $-\Id$ on the image of $H^1(\partial M, \mathfrak{v}_{\rho_{1/n}})\to H^1(m,\mathfrak{v}_{\rho_{1/n}})$.

\end{proof}

\subsubsection{The orbifold cohomology}

It will be convenient  to consider orbifold cohomology with twisted coefficients. We follow the simplicial approach of Paragraph~\ref{subsection:twistedcohomology}.
Consider a CW-complex structure on the underlying manifold of $\mathcal O_n$ ($S^3$ in this case), that respects the stratification of the singular locus (i.e.
the singular locus is a subcomplex). Following \cite{Porti}, we  use precisely the same definition as in Paragraph~\ref{subsection:twistedcohomology}
of twisted simplicial chains and cochains to defined the 
homology and cohomology of $\mathcal O_n$ with twisted coefficients. In particular we are interested in:
$$
H^*({\mathcal O}_n, \mathfrak{v}_{\rho_{1/n}}).
$$
 The fastest way to see that these cohomology groups are independent of the CW-complex structure   is using the  the cyclic regular covering 
$  {{M}_n}\to\mathcal O_n$ that is a manifold, the Fibonacci manifold,
as mentioned at the beginning of Subsection~\ref{subsec:cohom}. We denote the projection
$$
p\co M_n\to \mathcal O_{n}.
$$
On the other hand, let $t_{n}\co M_n\to M_n$
denote the generator of the group of deck transformations, so that $\mathcal O_n=M_n/\langle t_{n}\rangle $.  It acts on cohomology, and 
$H^*( {\mathcal O}_n,\mathfrak{v}_{\rho_{1/n}})^{t_{n}^*}$ denotes the invariant subspace.
The following lemma uses the standard transfer argument and can be found in \cite{Porti}:

\begin{lemma}
 \label{lemma:transfer}
The projection induces an isomorphism
$$
p^*: H^*({\mathcal O}_n, \mathfrak{v}_{\rho_{1/n}})\overset\cong\longrightarrow H^*(  {M}_n, \mathfrak{v}_{\rho_{1/n}})^{ t_{n}^*}.
$$
\end{lemma}

It follows from this lemma that $H^*({\mathcal O}_n, \mathfrak{v}_{\rho_{1/n}})$ is independent of the CW-complex.
It is also used in the next lemma.

\begin{lemma}\label{lem:4}
There is a natural isomorphism 
$H^*({\mathcal O}_n, \mathfrak{v}_{\rho_{1/n}}) \cong
H^*(\Gamma_{1/n}, \mathfrak{v}_{\rho_{1/n}})$.
\end{lemma}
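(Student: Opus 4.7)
The plan is to realize ${\mathcal O}_n$ as the quotient $\mathbf H^3/\Gamma_{1/n}$ and then compare its (orbifold) cohomology with the group cohomology of $\Gamma_{1/n}$ by exploiting that the stabilizers of the action are finite and the coefficients form a real (in particular rational) vector space, which forces all higher cohomology of the stabilizers to vanish.

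First, since ${\mathcal O}_n$ is hyperbolic for $n\geq 4$, its orbifold universal cover is $\mathbf H^3$, and the holonomy representation $\rho_{1/n}\co\Gamma_{1/n}\to PSO(3,1)$ realizes $\Gamma_{1/n}$ as a group of isometries acting properly discontinuously on $\mathbf H^3$ with quotient ${\mathcal O}_n$. The stabilizer of a point in the preimage of the singular locus is cyclic of order $n$, and all other stabilizers are trivial. Thus $\mathbf H^3$ is a contractible $\Gamma_{1/n}$-CW complex with finite isotropy, that is, a model for the classifying space $\underline E\Gamma_{1/n}$ of proper actions.

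Second, I would invoke the equivariant-cohomology spectral sequence (or equivalently the Leray spectral sequence of the quotient map $\mathbf H^3\to{\mathcal O}_n$): for any $\Gamma_{1/n}$-module $V$ over a field of characteristic zero,
\[
E_2^{p,q}=H^p({\mathcal O}_n;\mathcal H^q(V))\Longrightarrow H^{p+q}(\Gamma_{1/n};V),
\]
where the local coefficient sheaf $\mathcal H^q(V)$ has stalks $H^q(\mathrm{Stab}_x;V)$. Since each stabilizer is a finite cyclic group and $V=\mathfrak v_{\rho_{1/n}}$ is an $\mathbf R$-vector space, $H^q(\mathrm{Stab}_x;V)=0$ for $q>0$, so $\mathcal H^q(V)=0$ for $q>0$ and $\mathcal H^0(V)$ is the local system one uses to define orbifold cohomology with coefficients in $\mathfrak v_{\rho_{1/n}}$. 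The spectral sequence degenerates at $E_2$ and gives the required isomorphism
\[
H^*({\mathcal O}_n;\mathfrak v_{\rho_{1/n}})\cong H^*(\Gamma_{1/n};\mathfrak v_{\rho_{1/n}}).
\]

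An alternative route, which I would actually prefer for its concreteness, uses the finite Galois cover $p\co M_n\to{\mathcal O}_n$ with deck group $G=\mathbf Z/n\mathbf Z$. Because $|G|$ is invertible in the coefficient ring, the transfer argument gives
\[
H^*({\mathcal O}_n;\mathfrak v_{\rho_{1/n}})\cong H^*(M_n;p^*\mathfrak v_{\rho_{1/n}})^{G}\cong H^*(\pi_1M_n;\mathfrak v_{\rho_{1/n}})^{G},
\]
while the Lyndon--Hochschild--Serre spectral sequence of the extension $1\to\pi_1(M_n)\to\Gamma_{1/n}\to G\to 1$ collapses (again because $H^p(G;-)=0$ for $p>0$ on $\mathbf R$-vector spaces), yielding $H^*(\Gamma_{1/n};\mathfrak v_{\rho_{1/n}})\cong H^*(\pi_1M_n;\mathfrak v_{\rho_{1/n}})^{G}$. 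Comparing the two identifications proves the lemma.

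The only real obstacle is bookkeeping: one must verify that the local system used to define the orbifold cohomology $H^*({\mathcal O}_n;\mathfrak v_{\rho_{1/n}})$ on the LHS matches the coefficient system appearing in the spectral sequence (equivalently, that the $G$-equivariant structure on $p^*\mathfrak v_{\rho_{1/n}}$ coming from $\Gamma_{1/n}\twoheadrightarrow G$ is the correct one). Once the conventions are aligned, the vanishing of higher finite-group cohomology on $\mathbf R$-vector spaces makes the argument essentially formal.
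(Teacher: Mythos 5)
Your second (preferred) argument is essentially the paper's own proof: the paper also passes to the finite cyclic cover $\widetilde{{\mathcal O}_n}=M_n$ by the Fibonacci manifold, uses that this closed hyperbolic manifold is aspherical to identify $H^*(\pi_1(M_n);\mathfrak v_{\rho_{1/n}})$ with $H^*(M_n;\mathfrak v_{\rho_{1/n}})$, and then identifies both $H^*(\Gamma_{1/n};\mathfrak v_{\rho_{1/n}})$ and $H^*({\mathcal O}_n;\mathfrak v_{\rho_{1/n}})$ with the deck-transformation invariants, which is exactly what your transfer and Lyndon--Hochschild--Serre collapse (valid since the coefficients are a real vector space) make precise. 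Your first route via the equivariant spectral sequence for the proper action of $\Gamma_{1/n}$ on $\mathbf H^3$ is a correct, more general alternative, but it is not needed and is not the argument the paper uses.
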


\begin{proof}
As above let $M_n \to {\mathcal O}_n$ be cyclic regular covering corresponding to the Fibonacci manifold.
The compact, hyperbolic manifold $M_n$ is aspherical,
hence there is a canonical isomorphism
\[ H^*(\pi_1(M_n), \mathfrak{v}_{\rho_{1/n}})\cong
H^*(M_n,\mathfrak{v}_{\rho_{1/n}})\,.\]
Then the lemma follows because 
$H^*({\mathcal O}_n, \mathfrak{v}_{\rho_{1/n}})\cong H^*(  {M}_n, \mathfrak{v}_{\rho_{1/n}})^{ t_{n}^*}$,
by Lemma \ref{lemma:transfer}, and 
$H^*(\pi_1({\mathcal O}_n),\mathfrak{v}_{\rho_{1/n}}) = 
H^*(\pi_1(M_n),\mathfrak{v}_{\rho_{1/n}})^{t_{n}^*} $, by the transfer in group cohomology (see \cite{Bro82} for instance).
\end{proof}

The point of working with orbifold cohomology with twisted coefficients is that one can apply  some tools of simplicial cohomology,
 mainly Mayer-Vietoris and the long exact sequence of the pair \cite{Porti}.
This will be useful in the following paragraph.

\subsubsection{A finite order automorphism of $\Gamma_{1/n}$}

\begin{prop}\label{prop:5}
For sufficiently large $n\in\NN$ we have
\begin{enumerate}
\item $H^1(\Gamma_{1/n},\mathfrak{sl}(4)_{\rho_{1/n}})\cong 
H^1(\Gamma_{1/n},\mathfrak{v}_{\rho_{1/n}})\cong\RR$ is one-dimensional and $\varphi^*_{1/n}$ acts trivially on it.
\item $H^2(\Gamma_{1/n},\mathfrak{sl}(4)_{\rho_{1/n}})\cong 
H^2(\Gamma_{1/n},\mathfrak{v}_{\rho_{1/n}})\cong\RR$ is one-dimensional and $\varphi^*_{1/n}$ acts by multiplication by $-1$ on it.
\end{enumerate}
\end{prop}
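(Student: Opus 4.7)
The plan is to combine the orthogonal decomposition $\mathfrak{sl}(4)=\mathfrak{so}(3,1)\oplus\mathfrak v$ with the Mayer--Vietoris sequence for the orbifold decomposition $\mathcal O_n=M\cup N$, where $N$ is the singular solid torus along the branch locus, with $\pi_1^{\mathrm{orb}}(N)\cong\mathbf Z\times\mathbf Z/n$ generated by the longitude $l$ and the meridian $m$ of order $n$. First I reduce to the $\mathfrak v$-summand: Weil rigidity applied to the cocompact lattice $\Gamma_{1/n}\subset PSO(3,1)$ gives $H^1(\Gamma_{1/n},\mathfrak{so}(3,1))=0$, and Poincar\'e duality on the closed orientable $3$-orbifold $\mathcal O_n$ (transferred from its finite manifold cover $M_n$) together with self-duality of the Killing form gives $H^2(\Gamma_{1/n},\mathfrak{so}(3,1))=0$ as well.

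I then run the Mayer--Vietoris sequence
\[
\cdots\to H^i(\Gamma_{1/n},\mathfrak v)\to H^i(\Gamma_0,\mathfrak v)\oplus H^i(\mathbf Z\times\mathbf Z/n,\mathfrak v)\to H^i(\mathbf Z^2,\mathfrak v)\to H^{i+1}(\Gamma_{1/n},\mathfrak v)\to\cdots
\]
with coefficients twisted by $\rho_{1/n}$. Lemma~\ref{lem:rankone}(i) gives $\dim H^i(\mathbf Z^2,\mathfrak v)=1,2,1$ for $i=0,1,2$. Since $\mathbf Z/n$ has trivial positive-degree real cohomology, $H^i(\mathbf Z\times\mathbf Z/n,\mathfrak v)=H^i(\mathbf Z,\mathfrak v^{\rho_{1/n}(m)})$; combining $\dim\mathfrak v^{\rho_{1/n}(m)}=3$ (Lemma~\ref{lem:invhyperbolic}, since $m$ is elliptic of angle $2\pi/n\neq\pi$) with the invertibility of $\rho_{1/n}(l)-1$ on the $2$-dimensional hyperbolic complement of $\mathfrak v^{\rho_{1/n}(\pi_1\partial M)}$ inside $\mathfrak v^{\rho_{1/n}(m)}$, the dimensions become $3,1,0$. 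For the figure-eight complement itself, $\dim H^1(\Gamma_0,\mathfrak v_{\rho_{1/n}})=1$ for $n$ large by semicontinuity from the complete case, and $\dim H^2(\Gamma_0,\mathfrak v)=1$ by the vanishing Euler characteristic together with the vanishing of $H^0$ and $H^3$.

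The decisive step is to identify both images in $H^1(\mathbf Z^2,\mathfrak v)\cong\langle m^*,l^*\rangle\otimes\mathfrak v^{\rho_{1/n}(\pi_1\partial M)}$ as the same $1$-dimensional subspace $l^*\otimes\mathfrak v^{\rho_{1/n}(\pi_1\partial M)}$. For the image of $H^1(\Gamma_0,\mathfrak v)$ this is Corollary~\ref{cor:3} (kernel of restriction to $m$) combined with the dimension count. For the image of $H^1(\mathbf Z\times\mathbf Z/n,\mathfrak v)$ I argue with an explicit cocycle $c$: normalizing so that $c(m)=0$, necessarily $c(l)\in\mathfrak v^{\rho_{1/n}(m)}$, and projecting onto the summand $\mathfrak v^{\rho_{1/n}(\pi_1\partial M)}$ identifies the class again with $l^*\otimes\mathfrak v^{\rho_{1/n}(\pi_1\partial M)}$. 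Since the two images coincide, the Mayer--Vietoris restriction map has rank $1$, giving $\dim H^1(\Gamma_{1/n},\mathfrak v)=1$; continuing the sequence (using Poincar\'e--Lefschetz duality on $M$ to see that $H^2(\Gamma_0,\mathfrak v)\to H^2(\mathbf Z^2,\mathfrak v)$ is an isomorphism) then yields $\dim H^2(\Gamma_{1/n},\mathfrak v)=1$.

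Finally, for the action of $\varphi^*_{1/n}$, I use the injection $H^1(\Gamma_{1/n},\mathfrak v)\hookrightarrow H^1(\Gamma_0,\mathfrak v)\oplus H^1(\mathbf Z\times\mathbf Z/n,\mathfrak v)$: the proof of Corollary~\ref{cor:3} gives $\varphi_0^*=\mathrm{Id}$ on $H^1(\Gamma_0,\mathfrak v_{\rho_0})$, hence by continuity on $H^1(\Gamma_0,\mathfrak v_{\rho_{1/n}})$ for $n$ large, while on the second factor the quotient $\mathfrak v^{\rho_{1/n}(m)}/(\rho_{1/n}(l)-1)\mathfrak v^{\rho_{1/n}(m)}$ is canonically $\mathfrak v^{\rho_{1/n}(\pi_1\partial M)}$, on which $\Ad_{A_{1/n}^{-1}}$ acts trivially by Lemma~\ref{lem:2}. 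For $H^2$ I invoke the nondegenerate $\varphi^*$-equivariant cup product pairing $H^1(\Gamma_{1/n},\mathfrak v)\otimes H^2(\Gamma_{1/n},\mathfrak v)\to H^3(\Gamma_{1/n},\mathbf R)\cong\mathbf R$ induced by the Killing form, together with the fact that $\varphi_{1/n}$ is orientation-reversing (its action on $H_1(\partial M)$ has determinant $-1$) so it acts by $-1$ on $H^3$; combined with triviality on $H^1$ this forces $\varphi^*_{1/n}=-\mathrm{Id}$ on $H^2$. The main obstacle is the explicit verification that the two $H^1$-images agree and that $\Ad_{A_{1/n}^{-1}}$ respects the decomposition of $\mathfrak v^{\rho_{1/n}(m)}$, which requires controlling the ambiguity of $A_{1/n}$ modulo the centralizer of the peripheral image.
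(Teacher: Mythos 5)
Your argument is essentially the paper's: the same reduction to the $\mathfrak v$-summand via Weil rigidity and duality, the same Mayer--Vietoris for $\mathcal O_n=M\cup N_n$, the same identification (via Corollary~\ref{cor:3} and the dimension count) of the two images in $H^1(\partial M;\mathfrak v)$ with the kernel of restriction to $m$, and the same continuity argument for triviality of $\varphi^*_{1/n}$ on $H^1$. Where you genuinely diverge is the sign on $H^2$: the paper reads it off directly from the Mayer--Vietoris cokernel, using that $H^2(\mathcal O_n;\mathfrak v)$ is the quotient of $H^1(\partial M;\mathfrak v)$ by the $(+1)$-eigenspace of $\varphi^*_{1/n}$ (the image of $H^1(M;\mathfrak v)$), so that $\varphi^*_{1/n}$ acts by $-1$ on the quotient; you instead invoke the $\varphi^*$-equivariant nondegenerate pairing $H^1\otimes H^2\to H^3(\Gamma_{1/n};\RR)\cong\RR$ and the fact that the realizing symmetry is orientation-reversing. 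Your route works (the determinant $-1$ on $H_1(\partial M)$ does force the realizing isometry of $\mathcal O_n$ to reverse orientation --- note the explicit $A_0$, which contains the reflection $\operatorname{diag}(1,-1,1,1)$), but it carries extra overhead: you must justify the equivariance of the Killing-form cup product under $(\varphi_{1/n},\Ad_{A_{1/n}^{-1}})$, the identification $H^3(\Gamma_{1/n};\RR)\cong H^3(\mathcal O_n;\RR)$ (this is what Lemma~\ref{lem:4} is for), and the degree $-1$ statement; the paper's cokernel argument needs none of this, since it only uses data already present in the sequence.

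Two smaller points you should repair. First, $\dim H^0(\mathbf Z\times\mathbf Z/n;\mathfrak v_{\rho_{1/n}})$ is $1$, not $3$: the invariants are taken under both $\rho_{1/n}(m)$ and $\rho_{1/n}(l)$, so they coincide with $\mathfrak v^{\rho_{1/n}(\pi_1\partial M)}$ (the correct dimensions are $1,1,0$). You never use the wrong value downstream, but the $H^0$-level analysis is not optional: the injectivity of $H^1(\Gamma_{1/n};\mathfrak v)\to H^1(\Gamma_0;\mathfrak v)\oplus H^1(\mathbf Z\times\mathbf Z/n;\mathfrak v)$, on which your rank count and your $\varphi^*$-argument on $H^1$ both rest, requires the surjectivity of $H^0(N_n;\mathfrak v)\to H^0(\partial M;\mathfrak v)$, exactly as the paper checks before truncating the sequence. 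Second, your Mayer--Vietoris is written purely in group cohomology, but $\mathbf Z^2\to\mathbf Z\times\mathbf Z/n$ is not injective, so this is not the usual amalgamated-product sequence; one should either argue at the level of (aspherical) orbifold cohomology and transfer back via Lemma~\ref{lem:4}, as the paper does, or explain why real coefficients make the group-level sequence legitimate.
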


\begin{proof}
We start with the decomposition 
\[
H^*(\Gamma_{1/n},\mathfrak{sl}(4)_{\rho_{1/n}})=
H^*(\Gamma_{1/n},\mathfrak{so}(3,1)_{\rho_{1/n}})\oplus
H^*(\Gamma_{1/n},\mathfrak{v}_{\rho_{1/n}}).
\]
The group $H^1(\Gamma_{1/n},\mathfrak{so}(3,1)_{\rho_{1/n}})=0$ vanishes by Weil's infinitesimal rigidity and hence 
$$H^2(\Gamma_{1/n},\mathfrak{so}(3,1)_{\rho_{1/n}})=0$$ by Poincar\'e duality and Lemma~\ref{lem:4}. Thus
\[ H^i(\Gamma_{1/n},\mathfrak{sl}(4)_{\rho_{1/n}})=H^i(\Gamma_{1/n},\mathfrak{v}_{\rho_{1/n}}) \text{ for $i=1,2$.}\]
In order to compute 
$H^i(\Gamma_{1/n},\mathfrak{v}_{\rho_{1/n}})\cong
H^i({\mathcal O}_n,\mathfrak{v}_{\rho_{1/n}})$ we shall apply the Mayer-Vietoris sequence to the decomposition ${\mathcal O}_n=M\cup N_n$ where 
$N_n=\mathcal N(\mathrm{Sing}({\mathcal O}_n))$ is a regular neighborhood of the singular locus such that $M\cap N_n = \partial M$.
By Corollary~\ref{cor:cohmologydimU}, the dimension of $H^i(M,\mathfrak{v}_{\rho_{1/n}})$ is $1$ for $i=1,2$ and zero otherwise.
Hence
\begin{align*}
H^0({\mathcal O}_n,\mathfrak{v}_{\rho_{1/n}}) &\cong 
H^0(M,\mathfrak{v}_{\rho_{1/n}})\cong \mathfrak{v}^{\rho_{1/n}(\pi_1(M))} =0\\
\intertext{and}
H^0(\partial M,\mathfrak{v}_{\rho_{1/n}}) &\cong \mathfrak{v}^{\rho_{1/n}(\pi_1(\partial M))} = \mathfrak{v}^{\rho_{1/n}(\pi_1(N_n))} 
\cong
H^0(N_n,\mathfrak{v}_{\rho_{1/n}}).
\end{align*}
Therefore we obtain the following exact sequence from Mayer-Vietoris
\[
 H^1({\mathcal O}_n,\mathfrak{v}_{\rho_{1/n}}) \rightarrowtail
H^1(M,\mathfrak{v}_{\rho_{1/n}})\oplus H^1(N_n,\mathfrak{v}_{\rho_{1/n}})\to
H^1(\partial M,\mathfrak{v}_{\rho_{1/n}})\twoheadrightarrow
H^2({\mathcal O}_n,\mathfrak{v}_{\rho_{1/n}})\,.
\]
The injectivity of the first arrow follows from a dimension counting of the 0th-cohomology terms.
One can deduce that the last arrow is surjective by looking at the terms that follow the exact sequence:
 $\dim H^2( M,\mathfrak{v}_{\rho_{1/n}})=1$ by Corollary~\ref{cor:cohmologydimU}, 
$\dim H^2( \partial M,\mathfrak{v}_{\rho_{1/n}})=1$ by Lemma~\ref{lem:rankone}(i),
and the other cohomology groups appearing vanish as $N_n$ has the homotopy type of a circle
and $M$ and $\partial M$ have the homotopy type of a 2-complex. 
By Corollary~\ref{cor:3}, both groups 
$H^1(M,\mathfrak{v}_{\rho_{1/n}})$ and 
$H^1(N_n,\mathfrak{v}_{\rho_{1/n}})$ have the same image in
$H^1(\partial M,\mathfrak{v}_{\rho_{1/n}})$ which is exactly the kernel of the map 
$H^1(\partial M,\mathfrak{v}_{\rho_{1/n}})\to 
H^1(m,\mathfrak{v}_{\rho_{1/n}})$.
Notice  also that 
$\dim H^1(\partial M,\mathfrak{v}_{\rho_{1/n}}) =2$
and 
\[\dim H^1(N_n,\mathfrak{v}_{\rho_{1/n}}) =
\dim H^0(N_n,\mathfrak{v}_{\rho_{1/n}}) = 
\dim \mathfrak{v}^{\rho_{1/n}(\pi_1 N_n) } =1\,.\]
Therefore we get $\dim  H^1({\mathcal O}_n,\mathfrak{v}_{\rho_{1/n}}) =1$.
Moreover, the map $\varphi_{1/n}^*$ acts trivially on 
$H^1( \mathcal O_n,\mathfrak{v}_{\rho_{1/n}})$ since by the proof of
Corollary~\ref{cor:3} it acts trivially on 
$H^1( M,\mathfrak{v}_{\rho_{1/n}})$, and 
$H^1({\mathcal O}_n,\mathfrak{v}_{\rho_{1/n}})$ injects into 
$H^1( M,\mathfrak{v}_{\rho_{1/n}})$.

On the other hand we have
\[
H^1(\partial M,\mathfrak{v}_{\rho_{1/n}})\cong
H^1(\partial M,\RR)\otimes \mathfrak{v}^{\rho_{1/n}(\pi_1\partial M)},
\]
$\varphi(m)=m^{-1}$ and $\varphi(l)=l$. Hence the eigenvalues of 
$\varphi_{1/n}^*\co H^1(\partial M,\mathfrak{v}_{\rho_{1/n}})\to
H^1(\partial M,\mathfrak{v}_{\rho_{1/n}})$ are $\pm1$.
The eigenspace corresponding to the eigenvalue $+1$ is
the image of $H^1( M,\mathfrak{v}_{\rho_{1/n}})$ (and $H^1(N_n,\mathfrak{v}_{\rho_{1/n}})$). 
Hence $\varphi_{1/n}^*$ acts as $-\Id$ on 
$H^2({\mathcal O}_n,\mathfrak{v}_{\rho_{1/n}})$.
\end{proof}

\subsubsection{Deforming the projective structure of $\mathcal O_ n$}
\label{subsubsection:deform}

\begin{proof}[Proof of Proposition~\ref{prop:nonrigid}]
We shall show that every Zariski tangent vector 
is integrable.
We  use the following general setup:
let $\Gamma$ be a  finitely presented group and let
$\rho\co \Gamma \to GL(r,\mathbf R)$ be a representation.
A \emph{formal deformation} of $\rho$ is a representation
$\rho_t\co \Gamma\to GL(r,  \mathbf R [[t]]) $ such that
$ \rho_0  = \rho$.
Here $ \mathbf R [[t]] $ denotes the ring of formal power
series and $\rho_0\co\Gamma\to\CC $ is the evaluation of
$\rho_t$ at $t=0 $. 

Every formal deformation $\rho_t$ of $\rho$ can be written in
the form
$$  \rho_t ( \gamma) =(I_r + t u_1(\gamma) + t^2 u_2(\gamma) + 
        \cdots ) \rho(\gamma) $$
where $I_r$ denotes the identity matrix and 
$u_i\co \Gamma\to \mathfrak{gl}(r)$ are maps i.e.\ elements of 
$C^1( \Gamma,\mathfrak{gl}(r)_\rho)$.
An easy calculation gives
that $u_1 \in Z^1( \Gamma,\mathfrak{gl}(r)_\rho)$ is a cocycle  (Weil's theorem).
More generally we have the following:

\begin{lemma} \label{lem:def}
        Let $\rho\co\Gamma\to GL(r)$ be a homomorphism.
        Then $\rho_{t}\co \Gamma\to GL(r,\mathbf R [[t]])$ given by
        $$ \varrho_{t}(\gamma) = 
         (I_r+t u_1(\gamma)+t^2 u_2(\gamma)+t^3 u_3(\gamma)+\cdots)
         \rho(\gamma)$$
         is a homomorphism if and only if for all $k\in\ZZ$, $k\geq 1$, we 
         have 
         \begin{equation}
	    \label{eqn:integrability}
          \delta u_{k} + \sum_{i=1}^{k-1} u_i \cupd u_{k-i} = 0\,.
         \end{equation}
 \end{lemma}

The proof of this lemma is a direct calculation.
Here the cup product $\cupd$ is  the composition of the usual cup product $\cup$ with the matrix multiplication
$$
C^1(\Gamma,\mathfrak{gl}(r)_\rho)\otimes 
C^1(\Gamma,\mathfrak{gl}(r)_\rho) \xrightarrow{\cup}
C^2(\Gamma,\mathfrak{gl}(r)_\rho \otimes \mathfrak{gl}(r)_\rho)\xrightarrow{\boldsymbol{\cdot}}
C^2(\Gamma,\mathfrak{gl}(r)_\rho).
$$
Namely given to cochains $c_1,c_2\in C^1( \Gamma,\mathfrak{gl}(r)_\rho)$ the cup product $c_1\cupd c_2 \in C^2( \Gamma,\mathfrak{gl}(r)_\rho)$ is given by 
\[ c_1\cupd c_2 (\gamma_1, \gamma_2)=
c_1(\gamma_1) \Ad_{\rho(\gamma_1)} \big(c_2(\gamma_2)\big)\in \mathfrak{gl}(r)\,.\]

In the sequel the representation $\rho$ is going to be always 
$\rho_{1/n}$, hence we omit it from notation. Note that the $\Gamma_{1/n}$--module $\mathfrak{gl}(4)$ decomposes as a direct sum
\[ \mathfrak{gl}(4) = \mathbf R  \oplus \mathfrak{sl}(4)\]
where $\mathbf R \cong \mathbf R \cdot I_4$ is the trivial module, it is the center of $\mathfrak{gl}(4)$.
Moreover $H^i(\Gamma_{1/n},\mathbf R)=0$ for $i=1,2$ since
$H_1(M_n,\mathbf Z)$ is finite (no root of unity is a zero of the Alexander polynomial of the figure eight-knot). Hence
$$H^i(\Gamma_{1/n},\mathfrak{gl}(4))=
H^i(\Gamma_{1/n},\mathfrak{v})\text{ for $i=1,2$.}$$

Instead of $\varphi_{1/n}$ we shall consider the automorphism $\psi_n\co \Gamma_{1/n}\to\Gamma_{1/n} $ given by 
$\psi_{1/n}=c_{y^{-1}}\circ \varphi_{1/n}$, where $c_{y^{-1}}$ denotes conjugation by $y^{-1}$. By Proposition~III.8.3 from
\cite{Bro82}, the induced maps in cohomology are the same: $\psi_{1/n}^*=\varphi_{1/n}^*$. Notice that $\psi_{1/n}^4$ is the identity. 

Let $v\in H^1(\Gamma_{1/n},\mathfrak{gl}(4))$, we choose a cocycle $u_1\in  Z^1(\Gamma_{1/n},\mathfrak{gl}(4))$ in its cohomology class.
Since $\psi_{1/n}$ has order $4$, we may consider the average of the action of $\psi_{1/n}^*$ on $u_1$:
$$
\frac14 (u_1+ \psi_{1/n}^* (u_1)+ (\psi_{1/n}^*)^2 (u_1) + (\psi_{1/n}^*)^3 (u_1) ).
$$
This cocycle is $\psi_{1/n}^*$--invariant and, since $\psi_{1/n}^*$ acts as the identity on  $H^1(\Gamma_{1/n},\mathfrak{gl}(4))$,
it is cohomologous to $u_1$. Thus  we may assume that 
$\psi_{1/n}^* (u_1)=u_1$ by averaging.

First we claim that $u_1\cupd u_1$ is cohomologous to zero. This is because $\psi_{1/n}^* (u_1\cupd u_1)= \psi_{1/n}^* (u_1)\cupd \psi_{1/n}^* (u_1)=u_1\cupd u_1$
and $\psi_{1/n}^* (u_1\cupd u_1)$ is cohomologous to $-u_1\cupd u_1$, as $u_1\cupd u_1$ is a 2-cocycle and $\psi_{1/n}^*$ acts on 
 $H^2(\Gamma_{1/n},\mathfrak{gl}(4))$ by multiplication by $-1$.

There exist a $1$--chain $u_2 \in C^1(\Gamma_{1/n},\mathfrak{gl}(4))$ satisfying $u_1\cupd u_1 + \delta u_2 =0$. As before, we may assume that 
$\psi_{1/n}^* (u_2)=u_2$ by averaging. The same argument as before proves that  
$$
u_2\cupd u_1+u_1\cupd u_2=\psi_{1/n}^* (u_2\cupd u_1+u_1\cupd u_2) \sim- (u_2\cupd u_1+u_1\cupd u_2),
$$ 
where $\sim$ means cohomologous. Thus there exists a 
$1$--chain  $u_3 \in C^1(\Gamma_{1/n},\mathfrak{gl}(4))$ satisfying 
$$
u_1\cupd u_2+u_2\cupd u_1 + \delta u_3 =0.
$$
Again $u_3$ can be chosen to be $\psi_{1/n}^*$--invariant. By induction we can find an infinite sequence
of $1$--chains $u_2,u_3,\ldots$ that satisfy Equation~(\ref{eqn:integrability}).

This implies  that all obstructions to integrability vanish, hence we have a formal deformation 
of $\rho$, that gives formal integrability of $v$. We apply Artin's theorem~\cite{Art68}, to conclude that formal
 integrability implies actual integrability of $v$.
\end{proof}

\subsection{The Whitehead link}

A similar computation as for the figure eight knot tells us that the Whitehead link $L=K_1\sqcup K_2$ is infinitesimally projectively rigid. Let $\Gamma=\pi_1(M)$
denote the fundamental group of the Whitehead link exterior $M$.
We will work with the presentation:
$$
\Gamma=\langle x, y\mid x y^{-1} x^{-1} y x^{-1} y^{-1} x y x^{-1} y x y^{-1} x y x^{-1} y^{-1} \rangle
$$
where $x$ is a meridian for $K_1$ and $y$ is a meridian for $K_2$.
The holonomy representation $\rho\co\Gamma\to SL_2(\mathbf C)$ is given by 
$$
x\mapsto   \left( \begin {array}{cc} 1&1\\\noalign{\medskip}0&1\end {array}
\right)
\qquad
y\mapsto  \left(       \begin {array}{cc} 1&0\\\noalign{\medskip} -1-\mathbf{i} &
1\end {array}         \right)
$$
(see \cite{RileyLink} for details). A computation analogous to the one of the previous subsection  
shows that $\operatorname{dim} H^1(M; \mathfrak{v}_{\Ad\rho})=2$. Hence, by Corollary~\ref{cor:dim}, the Whitehead link 
is infinitesimally projectively rigid relative to the cusps.

Once we know the dimension of the deformation space, we have a geometric tool to understand the deformations: let $S$ denote the thrice puncture sphere   
illustrated in Figure~\ref{fig:whitehead}. By symmetry of the components of the link, there are two of them. The surface $S$ intersects one boundary torus in a longitude 
$l_x = y x^{-1}y^{-1}x y^{-1} x^{-1} y x $, and the other one
in two meridians $y$ and $z=x^{-1} y^{-1}x y x^{-1} y x$, with opposite orientation.
The restriction of the holonomy onto $\pi_1 (S)$ is conjugate to a representation into $SL_2(\mathbf R)$. Hence $S$ a totally geodesic thrice puncture sphere in the link complement.

\begin{figure}
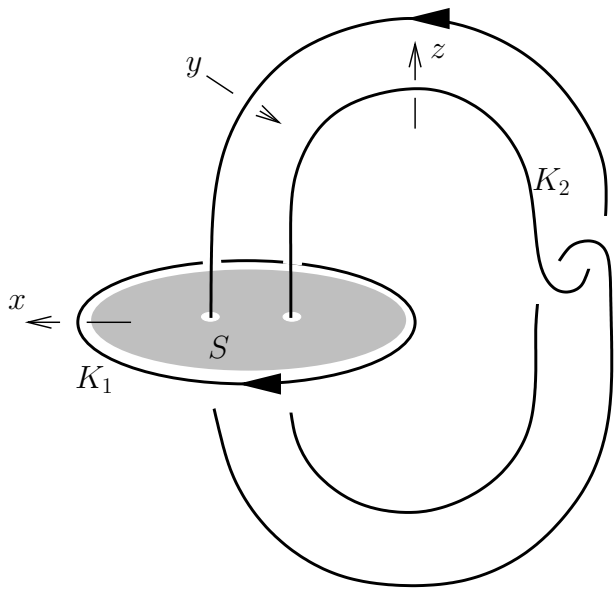

\begin{center}
\resizebox{8cm}{!}{\input whitehead2.pstex_t}
\end{center}
   \caption{The thrice punctured sphere $S$ in the Whitehead link.}\label{fig:whitehead}
\end{figure}

\begin{lemma}
\label{lem:longitudeWhitehead}
Let $\partial_1 M$ denote the boundary component of $K_1$. 
Every slope on $\partial_1 M$ different from the longitude $l_x$ is a rigid slope.
\end{lemma}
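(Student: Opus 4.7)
My plan is to exhibit a non-trivial class $[z]\in H^1(M;\mathfrak v_{\Ad\rho_0})$ that vanishes on the longitude $l_x$ but restricts non-trivially to every other slope on $\partial_1 M$. Since $S$ is two-sided, totally geodesic, and properly embedded, the Johnson--Millson bending construction yields a cocycle $z\in Z^1(\Gamma;\mathfrak v_{\Ad\rho_0})$ characterized by vanishing on $\pi_1(S)$ and equalling the infinitesimal generator of the one-parameter group of $\mathbf H^3_{\mathbf C}$-rotations about the (complex plane extending the) totally real plane of $S$, applied once per transversal crossing of $S$. Since $l_x\subset S$ this gives $z(l_x)=0$, and hence $i^*_{l_x}([z])=0$; meanwhile $z(\mu)$ is a specific non-zero element of $\mathfrak v$ which can be written down from the explicit matrices for $\rho_0(x),\rho_0(y)$ given above.

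For an arbitrary slope $\gamma\neq l_x$ on $\partial_1 M$, I write $\gamma=\mu^p l_x^q$ with $\gcd(p,q)=1$ and $p\neq 0$. The cocycle identity together with $z(l_x)=0$ yields
\[
z(\gamma)=z(\mu^p)=\sum_{i=0}^{p-1}\Ad\rho_0(\mu)^i\,z(\mu).
\]
To show that $[z|_\gamma]$ is non-trivial in $H^1(\gamma;\mathfrak v_{\Ad\rho_0})$, I use the non-degenerate Kronecker pairing $H^1(\gamma;\mathfrak v)\times\mathfrak v^{\Ad\rho_0(\gamma)}\to\mathbf R$ given by the Killing form and exhibit an invariant element $a\in\mathfrak v^{\Ad\rho_0(\gamma)}$ with $B(z(\gamma),a)\neq 0$. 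The natural candidate, following the strategy of Lemma~\ref{lem:angles}, is the infinitesimal rotation element $a_\gamma$ associated to the translation direction $v_\gamma=p\,v_\mu+q\,v_{l_x}$ of $\gamma$ on the horosphere at the cusp of $K_1$; alternatively one can search within the $3$-dimensional space $\mathfrak v^{\Ad\rho_0(\gamma)}$ (Lemma~\ref{lem:invparabolic}).

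The main obstacle is verifying the non-vanishing of $B(z(\gamma),a_\gamma)$. By an argument analogous to the proof of Lemma~\ref{lem:angles} together with the above cocycle identity, this pairing reduces to a trigonometric expression in the angle between $v_\gamma$ and the direction $v_{l_x}$ along which $S$ extends into the cusp of $K_1$, weighted by the integer $p$. This expression should vanish precisely when $v_\gamma\parallel v_{l_x}$, i.e.\ when $\gamma=l_x$, and be non-zero otherwise; in case the natural candidate $a_\gamma$ produces a degenerate answer at isolated angles, one adjusts by choosing another invariant from $\mathfrak v^{\Ad\rho_0(\gamma)}$. The non-vanishing can ultimately be confirmed by an explicit matrix calculation in the presentation $\Gamma=\langle x,y\mid\cdots\rangle$ using the given matrices $\rho_0(x),\rho_0(y)$ for the Whitehead link, in direct analogy with the verification $B(d(l),a)=-16$ performed for the figure eight knot in the preceding subsection.
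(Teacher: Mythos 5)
Your overall strategy --- the bending cocycle along the totally geodesic thrice-punctured sphere $S$, which vanishes on $l_x$ and is then paired against an invariant element of $\mathfrak v^{\Ad\rho_0(\gamma)}$ via the Killing form --- is exactly the route the paper takes. But the decisive step is wrong as stated: the pairing computed in the proof of Lemma~\ref{lem:angles} is proportional to $1+2\cos(2\varphi)=4\bigl(\cos^2\varphi-\tfrac14\bigr)$, where $\varphi$ is the angle between the translation direction of $\gamma$ and that of the bending curve $l_x$. This expression does \emph{not} vanish ``precisely when $v_\gamma\parallel v_{l_x}$''; on the contrary, it is nonzero at $\varphi=0,\pi$ (the longitude is excluded for the separate, trivial reason that the cocycle literally vanishes on it) and it vanishes exactly when $\varphi\equiv\pm\pi/3\pmod\pi$. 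So the slopes your argument cannot handle are those making angle $\pi/3$ or $2\pi/3$ with $l_x$, and you give no reason why such slopes do not occur.

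The missing ingredient is the arithmetic one the paper uses: the cusp shape of the Whitehead link lies in the Gaussian integers $\mathbf Z[\mathbf i]$, so the translation vectors of all peripheral curves lie in a lattice for which the angle of any slope with $l_x$ has rational tangent (or is a multiple of $\pi$); in particular it is never in $\tfrac\pi3\mathbf Z\setminus\pi\mathbf Z$, and Lemma~\ref{lem:angles} then applies to every slope other than $l_x$. Your two fallbacks do not repair the gap: an ``explicit matrix calculation'' cannot treat the infinitely many slopes $\gamma=\mu^p l_x^q$ uniformly, and ``choosing another invariant element of $\mathfrak v^{\Ad\rho_0(\gamma)}$'' presupposes that the restricted class is nonzero at the angles $\pm\pi/3$, which is exactly what Lemma~\ref{lem:angles} fails to give there and what would need a new argument.
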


\begin{proof}
We consider the bending along $S$ (see \cite{JohnsonMillson} for more details about the bending construction). Since $S$ is totally geodesic, the image of its holonomy is contained in 
$ SO(2,1)\subset SO(3,1)$. On the other hand the $SO(2,1)$ commutes with the exponential of
$$
a=
\begin{pmatrix}
   -3 & & & \\
    & 1 & & \\
    & & 1 &  \\
    & & & 1
\end{pmatrix}
\in \mathfrak{sl}(4).
$$
We write $\Gamma$ as an HNN-extension
$
\Gamma=\pi(M\setminus S)*_{\pi_1(S)}.
$
In particular we have a generator $\nu$ of $\Gamma$ such that the only relations involving $\nu$ are 
of the form $\nu j_1(s)\nu^{-1}=j_2(s)$, $\forall s\in\pi_1(S)$, where $j_1,j_2: \pi_1(S)\to \pi_1(M\setminus S)$ are the morphisms
induced by inclusions of each copy of $S$ in $M\setminus \mathcal N( S)$. The bending is the family of representations $\rho_t$, $t\in\mathbf R$,
such that $\rho_t\vert_{\pi_1(M\setminus S)}=\rho$ and  $\rho_t(\nu)=\exp(t\, a)\rho(\nu)$.
Johnson and Millson  prove in \cite[Lemma~5.5]{JohnsonMillson} that the cocycle tangent to this deformation is not cohomologous to zero.

If we restrict this bending cocycle to  
$\partial_1 M$, it is itself a bending cocycle along the longitude
$l_x$, and it happens to be precisely the infinitesimal deformation constructed in the proof of Lemma~\ref{lem:angles}. 
Thus, except for the longitude itself, this infinitesimal deformation is nontrivial when restricted to any slope of the torus,
because the cusp shape of the Whitehead link lies in the Gaussian integers $\mathbf Z[\mathbf i ]$,
thus the angle of any slope with the longitude $l_x$ can never be $\pi/3$, and we can apply Lemma~\ref{lem:angles}.
\end{proof}

\begin{proof}[Proof of Proposition~\ref{prop:twist}]
Lemma~\ref{lem:longitudeWhitehead} and Proposition~\ref{prop:flexingslope} imply that for almost all $n$ the $(n,1)$--Dehn fillings are infinitesimally projectively rigid.
According to \cite{Akiyoshi} those fillings are precisely the punctured torus bundles with tunnel number one. 

Twists knots are obtained by $(1,n)$--Dehn fillings, but we cannot apply Proposition~\ref{prop:flexingslope}, because the longitude is not a rigid slope.
However, the path  $(p,q)=(1,s)$ for $s\in\mathbf R$ and $s\geq 1$ is contained in the whole deformation space (cf.~\cite{Akiyoshi}). Hence, since the coefficients $(1,1)$ correspond to the figure eight knot exterior, 
with an argument similar to 
Theorem~\ref{thm:fromonetomany}, the $(1,n)$--Dehn fillings are infinitesimally rigid for all but finitely many $n$.
\end{proof}

%
%
%
\bibliographystyle{plain}
\bibliography{Heusener-Porti}

\end{document}